\documentclass[11pt,a4paper,oneside]{amsart}

\usepackage{amsmath}
\usepackage{amsfonts}
\usepackage{amssymb}
\usepackage{amsthm}
\usepackage{mathtools}
\usepackage{bbm}
\usepackage{booktabs}
\usepackage{threeparttable}
\usepackage[utf8x]{inputenc}
\usepackage[english]{babel}
\usepackage{enumerate}
\usepackage{graphicx} 
\usepackage{tikz}
\usetikzlibrary{shapes,arrows}
\usetikzlibrary{fit,backgrounds,shapes.geometric}
\usetikzlibrary{positioning}
\usetikzlibrary{calc}
\usetikzlibrary{decorations}
\usetikzlibrary{decorations.markings}
\usetikzlibrary{patterns}

\usepackage{url}
\usepackage{enumitem}

\usepackage{libertine}

\usepackage[T1]{fontenc}

\usepackage[boxed,vlined,norelsize,oldcommands]{algorithm2e}
\SetKwInOut{Input}{input}
\SetKwInOut{Output}{output}
\SetArgSty{rm}
\SetFuncSty{tt}
\setalcapskip{1ex}

\newcommand{\Z}{{\mathbb Z}}
\newcommand{\RR}{{\mathbb R}}
\newcommand{\ZZ}{{\mathbb Z}}
\newcommand{\zz}{{\ZZ}}
\newcommand{\zzp}{\zz_{\geq 0}}
\newcommand{\rr}{{\RR}}

\newcommand{\qq}{{\mathbb Q}}

\newcommand{\eps}{\varepsilon}

\newcommand{\vones}{\mathbbm{1}}

\DeclareMathOperator{\PGL}{PGL}
\DeclareMathOperator{\PSL}{PSL}
\DeclareMathOperator{\PGGL}{P\Gamma L}
\DeclareMathOperator{\AGGL}{A\Gamma L}
\DeclareMathOperator{\AGL}{AGL}
\DeclareMathOperator{\ASL}{ASL}
\DeclareMathOperator{\LL}{L}

\DeclareMathOperator{\conv}{conv}

\DeclareMathOperator{\lin}{span}

\DeclareMathOperator{\stab}{Stab}

\DeclareMathOperator{\fix}{Fix}
\DeclareMathOperator{\vertices}{vert}

\newcommand\core{\operatorname{core}}

\newcommand\GL{{\operatorname{GL}}} 

\newcommand{\Symmet}[1]{\mathcal{S}_{#1}}
\newcommand{\Cycl}[1]{\mathcal{C}_{#1}}


\theoremstyle{plain}
\newtheorem{theorem}{Theorem}
\newtheorem{proposition}[theorem]{Proposition}
\newtheorem{corollary}[theorem]{Corollary}
\newtheorem{lemma}[theorem]{Lemma}
\newtheorem{conjecture}[theorem]{Conjecture}

\theoremstyle{definition}
\newtheorem{definition}[theorem]{Definition}

\newtheorem{example}[theorem]{Example}

\newtheorem{remark}[theorem]{Remark}

\newcommand{\zzk}[1]{\zz_{(#1)}^n}
\newcommand{\hk}[1]{H_{\vones,#1}}
\newcommand{\proj}[2]{\ensuremath{{#1}|_{#2}}}
\newcommand{\ac}[2]{\ensuremath{{#2}{#1}}}

\newcommand{\smallSetOf}[2]{\{#1\,|\,#2\}}
\newcommand{\bigSetOf}[2]{\left\{#1\,:\,#2\right\}}

\newcommand{\card}[1]{\ensuremath{\left\lvert {#1}\right\rvert}}

\newcommand{\inv}[1]{\ensuremath{#1^{-1}}}
\newcommand{\norm}[1]{\ensuremath{\left\|{#1}\right\|}}
\newcommand{\scp}[2]{\ensuremath{\left \langle {#1}, {#2}\right \rangle}}
\newcommand{\cin}[2]{#1^{\left(#2\right)}}
\newcommand{\floor}[1]{\ensuremath{\left\lfloor{#1}\right\rfloor}}

\newcommand{\tp}{\top}
\newcommand{\defeq}{\vcentcolon=}

\newcommand{\defterm}{\emph}

\newcommand{\setOneTo}[1]{\ensuremath{\left[{#1}\right]}}

\let\Gamma\varGamma


\DeclareMathOperator{\bw}{bw}

\title{On Lattice-Free Orbit Polytopes}
\author[K. Herr, T. Rehn, and A. Sch\"{u}rmann]{Katrin Herr, Thomas Rehn, and Achill Sch\"{u}rmann}
\keywords{symmetry, core points, orbit polytopes, lattice-free}

\subjclass[2010]{52B15, 90C10, 20C99} 




\address{%
Department of Mathematics,
Technische Universität Darmstadt,  
Dolivostraße 15,
64293 Darmstadt, 
Germany}
\email{herr@mathematik.tu-darmstadt.de}

\address{%
initOS GmbH \& Co.\ KG,
Hegelstraße 28,
39104 Magdeburg,
Germany}
\email{thomas.rehn@research.initos.com}

\address{%
Institute for Mathematics,
University of Rostock,
18051 Rostock,
Germany}
\email{achill.schuermann@uni-rostock.de}

\thanks{Research by the first author was partially supported by Studienstiftung des deutschen
  Volkes and the paper was partially completed while the third author was visiting the Institute for Mathematical Sciences, National University of Singapore, in 2013.}

\begin{document}

\begin{abstract}
  Given a permutation group acting on coordinates of $\rr^n$, we consider 
  lattice-free polytopes that are the convex hull of an orbit of one integral vector. 
  The vertices of such polytopes are called \emph{core points} and they play a key role 
  in a recent approach to exploit symmetry in integer convex optimization problems. 
  Here, naturally the question arises, for which groups the number of core points 
  is finite up to translations by vectors fixed by the group. 
  In this paper we consider transitive permutation groups and prove this type of finiteness 
  for the $2$-homogeneous ones.
  We provide tools for practical computations of core points 
  and obtain a complete list of representatives for all $2$-homogeneous groups up to degree twelve.
  For transitive groups that are not $2$-homogeneous we conjecture that 
  there exist infinitely many core points up to translations by the all-ones-vector.
  We prove our conjecture for two large classes of groups: 
  For imprimitive groups and groups that have an irrational invariant subspace.
\end{abstract}

\maketitle
\setcounter{tocdepth}{1}
\tableofcontents

\section{Introduction}

Let $\Gamma\leq\Symmet{n}$ be a permutation group acting on $\rr^n$ by permuting coordinates. 
We consider orbit polytopes that are convex hulls $\conv(\Gamma z)$ of an orbit of an integral vector $z\in\zz^n$.
Such an orbit polytope is called lattice-free, when its vertices
are the only integral vectors in the polytope.
We note that lattice-free polytopes (as used in \cite{MR1300509,MR1737330})
are sometimes called empty lattice polytopes (see \cite{MR1709397,MR1737331}). 
We call the integral vertices of lattice-free orbit polytopes 
\emph{core points} with respect to~$\Gamma$ (cf. \cite{HRS2012}). 
These core points play an important role in symmetric integer convex optimization 
as a $\Gamma$-symmetric convex set contains an integral point if and only if it
contains a core point of~$\Gamma$ (cf. \cite[Theorem~4]{HRS2012}).

Core points can therefore 
be used to design algorithms that take advantage of available symmetries.
This is in particular the case when the number of core points is finite 
up to translations by vectors in the fixed space of~$\Gamma$.
In this case it is even possible to use a na\"ive approach based 
on enumeration of core points,
beating state-of-the-art optimization software for selected problems, 
as shown in~\cite{HRS2012}.
For such an approach, however, a full list of core points in needed.
In this paper we therefore not only address the fundamental question 
for which groups~$\Gamma$ a finiteness result holds,
but we also provide computational 
techniques to obtain full lists of core points in such cases.
As far as possible, we apply our tools to groups~$\Gamma$ of small degree
and provide full lists of core points,
which could potentially be used for future computations.
It should be noted though that such lists are not at all necessary
for the design of core point based, symmetry exploiting algorithms. 
Even groups~$\Gamma$ which do not have finitely many core points
(up to translations by vectors in the fixed space) may allow
the use of good approximations or parametrizations of them.

We focus on transitive permutation groups~$\Gamma$ only, 
i.e., groups such that all coordinates lie in the same orbit.  
This is a first necessary step in a study of more general groups as every
permutation group relates to a product of transitive groups.  
Next to the design of new core point based algorithms for
integer convex optimization problems, a detailed study of core
points of intransitive groups 
are major open tasks for future research 
(cf.~\cite{KatrinPhD, ThomasPhD}).

\bigskip

Our paper is organized as follows.
In Section~\ref{sec:basic-definitions} we introduce some notation and recall elementary
properties of core points. Using the John ellipsoid~\cite{MR0030135} we prove in
Section~\ref{sec:coreset-close-to-invariant-subspace} that core points of a given group
are always close to an invariant subspace of the group.  
It is a well known fact from representation theory that the space $\rr^n$ can be
decomposed into a direct sum of pairwise orthogonal invariant subspaces of the given
group.  A transitive group always fixes the one-dimensional linear subspace spanned by the
all-ones vector~$\vones$ and therefore also preserves its $(n-1)$-dimensional orthogonal
complement~$\vones^{\perp}$.  Therefore, every transitive permutation group has at least
these two invariant subspaces.  In the following sections we distinguish two fundamentally
different cases.

In Section~\ref{sec:finite-coresets} we study groups for which the $(n-1)$-dimensional
invariant subspace~$\vones^{\perp}$ cannot be decomposed into smaller invariant subspaces,
that is, we consider groups acting \emph{irreducibly} on~$\vones^{\perp}$. By
Cameron~\cite[Lemma~2]{MR0294471}, these are precisely the $2$-homogeneous groups.  We
show that in this case there exist only finitely many core points up to translation by the
all-ones vector.  This allows in principle to obtain a complete list of core points (up to
translation).  We provide mathematical tools for an exhaustive computer search, which we
perform up to dimension twelve (see Table~\ref{tab:coresets-cand-elimination}).

For the other case, that is, for groups having more than two invariant subspaces, we
conjecture that there are infinitely many core points up to translation (see
Conjecture~\ref{conj:finite-coreset-iff-group-irreducible}).  In
Section~\ref{sec:infinite-coresets} we prove our conjecture for two major cases:
imprimitive groups and groups which have an irrational invariant subspace.
Figure~\ref{fig:finite-vs-infinite-coresets} depicts an overview of the groups whose core
points we study in detail.  Despite convincing computational evidence (see
Section~\ref{sec:infinite-coresets}) for the remaining cases,
a complete proof for
Conjecture~\ref{conj:finite-coreset-iff-group-irreducible} is still missing.
\begin{figure}[htb]
  \newcommand{\boundellipse}[3]
{(#1) ellipse (#2 and #3)
}

\begin{tikzpicture}
\small{

\begin{scope}
\path[clip](0,0) arc (-90:90:4.2cm and 1.8cm);
\path[draw,fill=black!10] (0,0) to[out=90,in=180] (4.7,2.25) -- (4.7,0)--cycle;
\path[draw,fill=black!10] (0,3.6) to[out=-90,in=180] (4.7,1.35) -- (4.7,3.6)--cycle;
\path[draw] (0,0) to[out=90,in=180] (4.7,2.25) -- (4.7,0)--cycle;

\path[clip] (1,0) -- (4.7,0)--(4.7,3.6) -- (1,3.6) --cycle;
\path[clip] (0,0) to[out=90,in=180] (4.7,2.25) -- (4.7,0)--cycle;
\path[draw,pattern=vertical lines,pattern color=black] (0,3.6) to[out=-90,in=180] (4.7,1.35) -- (4.7,3.6)--cycle;
\end{scope}


\filldraw[thick,color= black!25, draw=black] (0,0) node (a){}  arc (270:90:4.2cm and 1.8cm)
node(b){} -- (0cm,1.8cm) node[rectangle, left, inner sep = 0.2cm,text
width=2.8cm,color=black] {groups
  acting\\irreducibly on $\vones^{\perp}$}-- (0,0);
\draw[thick](a) arc (-90:90:4.2cm and 1.8cm) -- (0cm,0.8cm) node[rectangle, right, inner sep = 0.7cm,text
width=2.5cm,color=black] {groups with an irrational inv. subspace} -- (0cm,1.8cm) node[rectangle, right, inner sep = 0.3cm,text
width=2.8cm,color=black] (m) {\large{\textbf{?}}} -- (0cm,2.8cm) node[rectangle, right, inner sep = 0.7cm,text
width=2.5cm,color=black] {imprimitive groups} -- (a);

}
\end{tikzpicture}
 \caption{Finite vs. infinite core sets}
 \label{fig:finite-vs-infinite-coresets}
\end{figure}

\newpage

\section{Basic definitions and core points}\label{sec:basic-definitions}

\subsection{Permutation groups and representations}\label{sec:permutation-group-basics}

We denote by $\scp{\cdot}{\cdot}$ the standard inner product in $\rr^n$.
The orthogonal projection of a vector $x$ onto a linear subspace $V$ is denoted by $\proj{x}{V}$.

By $\Symmet{n}$ we denote the symmetric group on the set $\setOneTo{n}:=\{1,\dots,n\}$.
Let $\Gamma \leq \Symmet{n}$ be a permutation group.
Since it acts on the set $\setOneTo{n}$, we define its \emph{degree} as~$n$.
We say that $\Gamma$ is \emph{transitive} if for every $x,y \in \setOneTo{n}$ there is a permutation $\gamma \in \Gamma$ with $\gamma x = y$.
In other words, all elements of $\setOneTo{n}$ lie in the same $\Gamma$-orbit.
More generally, we say that $\Gamma$ is \emph{$k$-transitive} for a $k \in \setOneTo{n}$ if for every two $k$-tuples $(x_1,\dots,x_k), (y_1,\dots,y_k) \in \setOneTo{n}^k$, with $x_i \not= x_j$ and $y_i \not= y_j$ for $i\not=j$, 
there is a permutation $\gamma \in \Gamma$ with $\gamma x_i = y_i$ for all $i \in \setOneTo{k}$.
The group $\Gamma$ is called \emph{$k$-homogeneous} if for every two subsets $X, Y \subset \setOneTo{n}$ with $k$ elements there is a permutation $\gamma \in \Gamma$ with $\gamma X = Y$.
Thus, every $k$-transitive group also is $k$-homogeneous.

If there exists a non-trivial partition $\setOneTo{n} = \bigsqcup_{i=1}^m \Omega_i$ with $2 \leq m \leq n-1$ and a permutation $\sigma$ of $[m]$ such that $\Gamma \Omega_i = \Omega_{\sigma(i)}$ for all $i$, we call the $\Omega_i$ \emph{blocks of imprimitivity}.
If no such partition exists, we say that $\Gamma$ is \emph{primitive}.

We do not distinguish between a permutation group and its canonical linear representation which acts on $\rr^n$ by permuting coordinates.
We call a subspace $V \subset \rr^n$ an \emph{invariant subspace} for $\Gamma$ if it is setwise fixed, i.e. $\Gamma V = V$.
Note the difference to the \emph{fixed space} $\fix(\Gamma)$, which is the subspace of all pointwise fixed elements of $\rr^n$.
We can always decompose $\rr^n$ into a direct sum of orthogonal $\Gamma$-invariant subspaces because the linear representation of $\Gamma$ is an orthogonal group.
For $\Gamma \leq \Symmet{n}$ the space spanned by the all-ones vector $\vones$ is always an invariant subspace, which is even fixed pointwise.

\subsection{Core points}\label{sec:core-points}

Core points were first studied in~\cite{bhj-2011} with respect to full symmetric and
alternating groups. The following definition is taken from~\cite{HRS2012}, which
generalizes the definition of core points from~\cite{bhj-2011} to arbitrary subgroups of~$\Symmet{n}$.

\begin{definition} \label{def:corepoint} Given a group $\Gamma\leq\Symmet{n}$, a
  \emph{core point with respect to $\Gamma$} is an integral point $z\in\ZZ^n$ such that
  the convex hull of its $\Gamma$-orbit does not contain any further integral points, that
  is, $\conv(\Gamma z)\cap \ZZ^n=\Gamma z$.
  Phrased differently, the \emph{orbit polytope} of $z$ with respect to~$\Gamma$, that is, $\conv(\Gamma z)$, is lattice-free.
\end{definition}

\begin{remark}\label{rem:corepoints-up-to-translation}
Translation by~$\vones$ commutes with any element of~$\Gamma$, therefore the polytope $\conv(\Gamma (z+\vones))$ is the translate $\vones + \conv(\Gamma z)$.
Thus, it suffices to study core points up to translation by~$\vones$.
\end{remark}

There are two canonical ways to choose representatives.
The first is studying core points in the affine hyperplanes $\hk{0},\dots,\hk{n-1}$ where $\hk{k}\coloneqq\smallSetOf{x\in\RR^n}{\scp{x}{\vones}=k}$.
We refer to the set of integer points in these hyperplanes as \emph{layers}~$\zzk{k} \defeq \zz^n \cap \hk{k}$ with \emph{index}~$k$.
The second way is to select zero-based representatives according to the following definition.
\begin{definition}\label{def:zero-based}
 A point $z\in\zz^n$ is called \emph{zero-based} if all its coordinates are non-negative and at least one coordinate is zero.
\end{definition}

At the end of this section we recall the only previously known result about core points of transitive groups.
For the full symmetric and alternating group on~$n$ variables the following characterization of core points was proven in~\cite{bhj-2011}.
Since any subgroup $\Gamma'\leq\Gamma$ inherits the core points of~$\Gamma$, the core points with respect to~$\Gamma=\Symmet{n}$ are core points with respect to any group $\Gamma'\leq\Symmet{n}$.
For this reason we call them \emph{universal core points}.

\begin{example}[Universal core points]\label{ex:universal-core-points}
  Let $\Gamma$ be the full symmetric or the alternating group on~$n$ variables. For each $k\in\setOneTo{n}$, the core points in the affine hyperplane~$\hk{k}$ are precisely the vertices of the hypersimplex: 
\[\core_\Gamma(\hk{k})=\bigSetOf{\sum_{i\in T}e_i}{T\text{ a $k$-element subset of
  }[n]}\enspace.\] 
  Thus, each core point with respect to~$\Gamma$ is an integral point
with coordinates in $\{t,t+1\}$ for some $t\in\ZZ$.
\end{example}

\section{Core points are close to invariant subspaces}\label{sec:coreset-close-to-invariant-subspace}

In this section we will show that core points are always close to an invariant subspace of the group.
To prove this we use a well-known theorem from convex geometry (\cite{MR0030135}, see also \cite{1014.52001}).
\begin{theorem}[John ellipsoid \cite{MR0030135}]\label{thm:john-ellipsoid}
 Let $K \subset \rr^n$ be a convex body, i.e., $K$ is compact and convex with non-empty interior.
 Among all ellipsoids containing~$K$ there exists a unique ellipsoid~$E$ of minimal volume.
 Further, a scaled version of $E$ is in turn contained in $K$:
 $$t + \frac{1}{n}E \subseteq K \subseteq E,$$
 where $t \subseteq \rr^n$ is a suitable translation vector that depends on the center of~$E$.
 The scaling factor $\frac{1}{n}$ for $E$ is optimal as the case of a simplex shows.
\end{theorem}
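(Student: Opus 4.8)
The plan is to establish the three assertions---existence, uniqueness, and the inner containment---in turn, parametrising an ellipsoid by a centre $c\in\rr^n$ and a positive definite matrix $M$ through $E=\{x\in\rr^n : \scp{(x-c)}{M(x-c)}\le 1\}$, so that $\vol(E)$ is a fixed constant times $\det(M)^{-1/2}$. For existence, the family of ellipsoids containing $K$ is non-empty (a large ball works), and since $\interior K\neq\emptyset$ the body $K$ contains a ball of positive radius, so every enclosing ellipsoid has volume bounded below; as $K$ is bounded, the parameters $(c,M)$ attaining volume below any fixed threshold range over a compact set, and continuity of $\det(M)^{-1/2}$ then forces the infimum to be attained. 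For uniqueness I would exploit the strict log-concavity of the determinant: after an affine normalisation turning one minimiser into the unit ball $B$ (so $M_1=I$ and, by equality of volumes, $\det M_2=1$), averaging the two defining quadratic forms produces an ellipsoid that still contains $K\subseteq E_1\cap E_2$ and has shape matrix $\tfrac12(I+M_2)$; Minkowski's determinant inequality gives $\det\bigl(\tfrac12(I+M_2)\bigr)\ge 1$ with equality only when $M_2=I$, so any second, distinct minimiser would have strictly smaller volume---a contradiction---and the same convexity pins down the centre.

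For the inner containment I would first reduce, using the affine covariance of all three sets $t+\tfrac1n E$, $K$ and $E$, to the case where $E=B$ is the unit ball centred at the origin. The crux is then the first-order optimality conditions for the minimal enclosing ellipsoid (\emph{John's conditions}): there exist contact points $u_1,\dots,u_m\in K\cap\partial B$ and weights $\lambda_i>0$ with
\[
\sum_i \lambda_i u_i=0 \qquad\text{and}\qquad \sum_i \lambda_i\, u_i u_i^{\tp}=I ,
\]
whence $\sum_i\lambda_i=n$ on taking the trace. Deriving these conditions is the step I expect to be the \emph{main obstacle}: one must show that if $I$, together with the centring constraint, failed to lie in the convex cone generated by the rank-one matrices $u u^{\tp}$ over the contact set $K\cap\partial B$, then a separation theorem in the space of symmetric matrices would furnish an admissible perturbation of $(c,M)$ keeping $K$ inside the ellipsoid while strictly decreasing its volume, contradicting minimality; Carath\'eodory's theorem then shows finitely many contact points suffice.

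Granting John's conditions, I would prove $\tfrac1n B\subseteq K$ and transport it back by the affine map to obtain $t+\tfrac1n E\subseteq K$. It is enough to verify $h_K(w):=\max_{x\in K}\scp{w}{x}\ge\tfrac1n$ for every unit vector $w$. Since the contact points lie in $K$, putting $t_i:=\scp{w}{u_i}\in[-1,1]$ and $b:=\max_i t_i$ gives $h_K(w)\ge b$, while John's conditions yield $\sum_i\lambda_i t_i=0$ and $\sum_i\lambda_i t_i^2=\scp{w}{w}=1$. The single inequality
\[
0\;\le\;\sum_i \lambda_i\,(b-t_i)(1+t_i)\;=\;b\sum_i\lambda_i-\sum_i\lambda_i t_i^2\;=\;bn-1 ,
\]
valid because $-1\le t_i\le b$, forces $b\ge\tfrac1n$ and hence $h_K(w)\ge\tfrac1n$. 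Finally, the regular simplex inscribed in $B$ realises $b=\tfrac1n$ in the direction opposite a vertex, confirming that the factor $\tfrac1n$ cannot be improved.
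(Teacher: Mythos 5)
The paper does not prove this statement at all: it quotes John's theorem as a classical result with a citation to \cite{MR0030135} (see also Barvinok's book), so there is no internal proof to compare against, and any correct proof you give is necessarily ``a different route.'' Your outline is the standard modern argument (as in Barvinok or K.~Ball's survey) and it is essentially correct: compactness of the parameter set $(c,M)$ for existence, strict concavity of $\det(\cdot)^{1/n}$ on positive definite matrices for uniqueness, John's contact conditions $\sum_i\lambda_i u_i=0$, $\sum_i\lambda_i u_iu_i^{\tp}=I$ (hence $\sum_i\lambda_i=n$ by the trace) for the inner containment, and your one-line estimate $0\le\sum_i\lambda_i(b-t_i)(1+t_i)=bn-1$ is exactly the right computation, since $b-t_i\ge 0$ and $1+t_i\ge 0$ on the unit sphere. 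You also correctly identify the derivation of the contact conditions as the main technical obstacle; the separation argument in the space of symmetric matrices plus Carath\'eodory is the standard way to discharge it, so flagging rather than executing it is a fair division of labour in a sketch.

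Two small points deserve attention. First, in the uniqueness step you average the two defining quadratic \emph{forms} and assert the resulting ellipsoid has shape matrix $\tfrac12(I+M_2)$; when the centres $c_1\neq c_2$ differ, the averaged quadratic polynomial, after completing the square, gives an ellipsoid with shape matrix $\tfrac12(I+M_2)$ but right-hand side $1-\delta$ with $\delta>0$, and it is precisely this strict shrinkage that ``pins down the centre'' --- worth making explicit, since otherwise the centre argument is circular. Second, for the optimality claim it is not enough to exhibit the regular simplex inscribed in $B$ and compute $b=\tfrac1n$: you must also verify that $B$ actually \emph{is} the minimal enclosing ellipsoid of that simplex. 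This follows either from the converse (sufficiency) direction of John's conditions, which your contact points satisfy, or more cheaply from uniqueness: the John ellipsoid inherits the simplex's symmetry group, the only invariant ellipsoids are balls centred at the centroid, and among those the circumscribed ball is clearly minimal. With these two repairs the proof is complete.
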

This ellipsoid is called the \emph{minimal enclosing ellipsoid} of $K$.
For orbit polytopes this ellipsoid can be computed as follows.
Let $\Gamma\leq\Symmet{n}$ be a permutation group.
Recall from Section~\ref{sec:permutation-group-basics} that we can decompose $\rr^n$ into pairwise orthogonal $\Gamma$-invariant subspaces.
We have
$$\rr^n = \bigoplus_{i=1}^m V_i$$
where each $V_i\subseteq \rr^n$ is setwise preserved by $\Gamma$ 
(i.e., $\gamma v \in V_i$ for all $\gamma\in \Gamma$ and $v\in V_i$) 
and is irreducible 
(i.e., $V_i$ does not contain a proper invariant subspace).
Note that, depending on the group, this decomposition may not be unique, which is a well-known fact in representation theory (see, for instance, \cite{MR0450380}).
The minimal enclosing ellipsoid of an orbit polytope is closely related to invariant subspaces as \cite[Thm.~2.2]{MR2178312} shows.
In the following $\|\cdot\|$ always refers to the Euclidean norm.
\begin{theorem}[\cite{MR2178312}]\label{thm:orbit-polytope-ellipsoid}
 Let $\Gamma\leq\Symmet{n}$ be a transitive permutation group.
 Let $z \in \zzk{k}$ be such that the dimension of the orbit polytope of $z$ is maximal, i.e., $\dim \conv \Gamma z = n - 1$.
 Then there exists a decomposition $\rr^n = \lin\vones \oplus \bigoplus_{i=1}^m V_i$ of $\rr^n$ into the fixed space $\lin\vones$  
and other $\Gamma$-invariant invariant subspaces~$V_i$ such that
 the minimal enclosing ellipsoid of the orbit polytope $\conv(\Gamma z)$ is given by
 \begin{equation}\label{eq:minimal-enclosing-ellipsoid}
  \frac{k}{n}\vones + \left\{  x \in \hk{0} \;:\; \sum_{i=1}^m (\dim V_i) \frac{\| \proj{x}{V_i} \|^2}{\| \proj{z}{V_i} \|^2} \leq n-1 \right\}.
 \end{equation}
\end{theorem}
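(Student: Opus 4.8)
The plan is to exploit the uniqueness of the John ellipsoid together with the $\Gamma$-symmetry of the orbit polytope $K\defeq\conv(\Gamma z)$, and then to pin down its shape using John's optimality (decomposition-of-identity) condition accompanying Theorem~\ref{thm:john-ellipsoid}. Since every $\gamma\in\Gamma$ acts orthogonally and permutes the vertices of $K$, it maps $K$ to itself; by the uniqueness part of Theorem~\ref{thm:john-ellipsoid} the minimal enclosing ellipsoid $E$ is therefore $\Gamma$-invariant. In particular its center is a $\Gamma$-fixed point lying in the affine hull $\hk{k}$ of $K$, and as $\fix(\Gamma)=\lin\vones$ for a transitive group this center must be $\tfrac{k}{n}\vones$. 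Writing $z'\defeq z-\tfrac{k}{n}\vones\in\hk{0}$ and $w'\defeq w-\tfrac{k}{n}\vones$ for $w\in\Gamma z$, the recentered ellipsoid $E_0\defeq E-\tfrac{k}{n}\vones$ is a $\Gamma$-invariant ellipsoid in $\hk{0}=\vones^\perp$, which (using $\dim K=n-1$, so $E_0$ is non-degenerate) I may write as $E_0=\smallSetOf{x\in\hk{0}}{\scp{Sx}{x}\le R}$ for a positive definite self-adjoint $\Gamma$-equivariant operator $S$ on $\hk{0}$ and some $R>0$.

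Next I would record two symmetry facts. Because orthogonal projection onto an invariant subspace commutes with the $\Gamma$-action and each $w'$ is a $\Gamma$-image of $z'$, the quantities $\norm{\proj{w'}{V}}$ are constant along the orbit and equal $\norm{\proj{z}{V}}$ for every invariant subspace $V$; hence every vertex takes the same value of the form $\scp{S\cdot}{\cdot}$. Thus either all vertices lie on $\partial E_0$ or all lie strictly inside, and the latter would contradict minimality of $E$, so the whole orbit lies on $\partial E_0$. Moreover the centroid $\tfrac1N\sum_{w\in\Gamma z}w$ (with $N\defeq\card{\Gamma z}$) is $\Gamma$-fixed and lies in $\hk{k}$, hence equals $\tfrac{k}{n}\vones$, giving $\sum_{w}w'=0$.

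The heart of the argument is John's characterization of the minimal enclosing ellipsoid: after the linear map $T\defeq\sqrt{S/R}$ carrying $E_0$ to the unit ball of $\hk{0}$, there exist contact points on the unit sphere and positive weights realizing a decomposition of the identity. The contact set is $\Gamma$-invariant and contained in the vertex orbit, hence equals it; averaging the weights over $\Gamma$ (John's conditions are linear in the weights) and using transitivity lets me take one common weight $c_0$ on all $N$ vertices. The centering condition $\sum_w c_0\,Tw'=0$ then holds by $\sum_w w'=0$, while the identity condition $c_0\sum_w (Tw')(Tw')^\tp=I$ on $\hk{0}$ reads $c_0\,T\,G\,T=I$, where $G\defeq\sum_{w\in\Gamma z}w'(w')^\tp$. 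This forces $S=\tfrac{R}{c_0}\,G^{-1}$; in particular $S$ is a scalar multiple of $G^{-1}$, so the two $\Gamma$-equivariant operators are simultaneously diagonalizable. Taking the trace of the identity and using $\norm{Tw'}=1$ yields $N c_0=\dim\hk{0}=n-1$, i.e. $c_0=(n-1)/N$.

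Finally I would read off the stated form via the decomposition promised by the theorem. Diagonalize the intrinsic operator $G$ by the spectral theorem: its eigenspaces are $\Gamma$-invariant and mutually orthogonal, and refining each into irreducibles yields an orthogonal decomposition $\hk{0}=\bigoplus_{i=1}^m V_i$ on which $G$ acts as scalars $g_i$. Since $S\propto G^{-1}$, this same decomposition diagonalizes $S$, with $\scp{Sx}{x}=\sum_i\lambda_i\norm{\proj{x}{V_i}}^2$. Computing $g_i$ by a trace over $V_i$ gives $g_i\dim V_i=\sum_w\norm{\proj{w'}{V_i}}^2=N\norm{\proj{z}{V_i}}^2$, hence $\lambda_i=\tfrac{R}{c_0 g_i}=\tfrac{R\,\dim V_i}{(n-1)\norm{\proj{z}{V_i}}^2}$; substituting into $\scp{Sx}{x}\le R$ and cancelling $R$ turns the defining inequality of $E_0$ into $\sum_{i=1}^m(\dim V_i)\norm{\proj{x}{V_i}}^2/\norm{\proj{z}{V_i}}^2\le n-1$, which together with the center $\tfrac{k}{n}\vones$ is exactly~\eqref{eq:minimal-enclosing-ellipsoid}. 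I expect the main obstacle to be the rigorous invocation and symmetrization of John's optimality condition—arguing that the contact points form a full $\Gamma$-orbit and that averaging gives equal weights—and, crucially, extracting the relation $S\propto G^{-1}$. This last point resolves the potential difficulty that two unrelated $\Gamma$-equivariant forms need not be simultaneously diagonalizable when $\vones^\perp$ has isomorphic irreducible constituents: here $S$ is tied to the intrinsic Gram operator $G$, so the single decomposition diagonalizing $G$ works, and the remaining trace computations are routine.
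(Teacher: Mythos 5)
Your proposal is correct, but note that the paper itself contains no proof of this statement: Theorem~\ref{thm:orbit-polytope-ellipsoid} is imported verbatim from Barvinok--Blekherman \cite{MR2178312}, so there is no internal argument to compare against. What you have written is essentially a self-contained reconstruction of the source's proof, and the key steps all check out: uniqueness of the John ellipsoid forces $\Gamma$-invariance and pins the center at $\frac{k}{n}\vones$; equivariance of $S$ makes $\scp{Sw'}{w'}$ constant on the orbit, so all vertices are contact points; averaging John's weights over $\Gamma$ and using transitivity yields a single weight $c_0 = (n-1)/N$; and the identity condition gives $S \propto G^{-1}$ with $G = \sum_w w'(w')^\tp$, after which the trace computations $g_i \dim V_i = N\norm{\proj{z}{V_i}}^2$ produce exactly the coefficients in~\eqref{eq:minimal-enclosing-ellipsoid}. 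Two remarks. First, you need a stronger form of John's theorem than the paper's Theorem~\ref{thm:john-ellipsoid}, which states only existence, uniqueness, and the $\frac{1}{n}$-scaling: the contact-point decomposition of the identity is an extra classical ingredient (available, e.g., in Barvinok's book \cite{1014.52001}, which the paper cites), and you correctly use only its necessity direction, applied inside the hyperplane $\hk{0}$ where the recentered body is full-dimensional. Second, your observation that $S \propto G^{-1}$ is the crucial point and is exactly right: when $\vones^{\perp}$ contains isomorphic irreducible constituents the decomposition into irreducibles is not unique and a generic equivariant form need not be diagonal in a prescribed one, which is precisely why the theorem asserts only that \emph{some} decomposition works --- the one adapted to the eigenspaces of $G$. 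Your handling of this subtlety, together with the positivity $\norm{\proj{z}{V_i}} > 0$ following from $\dim \conv \Gamma z = n-1$ (so $G$ is positive definite on $\hk{0}$), closes all the gaps I would look for.
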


\begin{remark}\label{rem:minimal-enclosing-ellipsoid-dim}
  The ellipsoid given by~\eqref{eq:minimal-enclosing-ellipsoid} is contained in the affine hyperplane~$\hk{k}$ and thus is not full-dimensional, having dimension $n-1$.
\end{remark}

\begin{remark}
  If the decomposition of $\rr^n$ into $\Gamma$-invariant subspaces is unique, then the minimal enclosing ellipsoid is also uniquely determined by the formula in the theorem.
If there are multiple decompositions, only one of these leads to the minimal enclosing ellipsoid.
\end{remark}

\begin{theorem}\label{thm:core-points-close-to-invariant-subspace}
 Let $\Gamma\leq\Symmet{n}$ be a transitive permutation group. 
 Then there exist a constant $C(n)$ depending only on the dimension~$n$, 
 such that for every core point~$z$ with respect to~$\Gamma$
 there exists a $\Gamma$-invariant subspace $V$ of $\rr^n$ different from $\fix(\Gamma) = \lin\vones$ such that $\left\|\proj{z}{V}\right\| \leq C(n)$.
\end{theorem}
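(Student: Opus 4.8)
The plan is to use the John ellipsoid machinery of the previous section, exploiting the fact that the orbit polytope of a core point $z$ is lattice-free. The key geometric principle is that a lattice-free convex body cannot be too large in every direction simultaneously: by Theorem~\ref{thm:orbit-polytope-ellipsoid}, the minimal enclosing ellipsoid of $\conv(\Gamma z)$ has semi-axes in each invariant subspace $V_i$ proportional to $\|\proj{z}{V_i}\|$, and its scaled copy $t + \tfrac1n E$ is contained in the polytope. If $\|\proj{z}{V_i}\|$ were large in \emph{every} non-fixed subspace $V_i$, then the shrunken ellipsoid $t + \tfrac1n E$ would be large enough in $\vones^\perp$ to force a lattice point into the polytope other than a vertex of the orbit, contradicting lattice-freeness.

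First I would reduce to the full-dimensional case: if $\dim\conv(\Gamma z) < n-1$, then $z$ already lies close to (indeed, in a translate of) a proper invariant subspace, and the bound is trivial; otherwise $\dim\conv(\Gamma z) = n-1$ and Theorem~\ref{thm:orbit-polytope-ellipsoid} applies with a decomposition $\rr^n = \lin\vones \oplus \bigoplus_{i=1}^m V_i$. Second, I would invoke a general quantitative fact about lattice-free bodies: there is a constant depending only on $n$ bounding the volume (or the lattice width, or the size of the largest inscribed ball) of a lattice-free convex body contained in an affine hyperplane $\hk{k}$, viewed as an $(n-1)$-dimensional lattice problem in $\zz^n \cap \hk{k}$. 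Since $t + \tfrac1n E \subseteq \conv(\Gamma z)$ is lattice-free, its $(n-1)$-dimensional volume is bounded by some $V(n)$, and hence so is the product of its semi-axes $\tfrac1n\|\proj{z}{V_i}\|/\sqrt{\dim V_i}$ over all $i$.

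The third and decisive step is to turn this volume bound into a bound on a \emph{single} $\|\proj{z}{V_i}\|$. From the volume inequality $\prod_{i=1}^m \|\proj{z}{V_i}\|^{\dim V_i} \le c(n)$ for a suitable constant $c(n)$, at least one factor must be small: if all $\|\proj{z}{V_i}\|$ exceeded some threshold, the product would exceed $c(n)$. Concretely, taking $V$ to be the $V_i$ minimizing $\|\proj{z}{V_i}\|$, the bound $\|\proj{z}{V}\|^{\dim V} \le \|\proj{z}{V}\|^{\sum_i \dim V_i} \le \prod_i \|\proj{z}{V_i}\|^{\dim V_i}$ does \emph{not} hold directly, so instead I would argue that the minimal factor is at most $c(n)^{1/(\sum_i \dim V_i)}$, whence $\|\proj{z}{V}\| \le C(n) := c(n)^{1/(n-1)}$. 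This yields the desired invariant subspace $V \neq \lin\vones$.

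The hard part will be establishing the volume (or lattice-width) bound for lattice-free bodies in the restricted lattice $\zz^n \cap \hk{k}$ and making it genuinely uniform in $k$ and in the chosen decomposition. One must be careful that $\zz^n \cap \hk{k}$ is an affine translate of an $(n-1)$-dimensional lattice whose covolume is $\sqrt{n}$ (independent of $k$), so a flatness- or volume-type theorem applies with a constant depending only on $n$; the fact that $\conv(\Gamma z)$ lies in $\hk{k}$ and is symmetric under $\Gamma$ should only help. I would also need to confirm that lattice-freeness of the polytope transfers to its inscribed ellipsoid, which is immediate since a subset of a lattice-free set is lattice-free. Once the uniform bound is in hand, the extraction of a single small projection is purely arithmetic.
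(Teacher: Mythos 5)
Your overall strategy -- pass to the shrunken John ellipsoid via Theorem~\ref{thm:orbit-polytope-ellipsoid}, observe that it must avoid the layer lattice $\zzk{k}$, and convert that into a bound on some $\norm{\proj{z}{V_i}}$ -- is exactly the paper's, and your first step (the ellipsoid $E'$ contains no point of $\zzk{k}$ when $z$ is a core point) is sound, modulo the small point that ``a subset of a lattice-free set is lattice-free'' is not literally true in this paper's sense: $P$ contains its vertices, and you must check separately that no vertex lies in $E'$. This is easy, since every vertex $\gamma z$ satisfies $\sum_i (\dim V_i)\norm{\proj{(\gamma z)}{V_i}}^2/\norm{\proj{z}{V_i}}^2 = n-1 > \frac{1}{n-1}$, but it needs saying.

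The genuine gap is your second step: there is \emph{no} constant $V(n)$ bounding the $(n-1)$-dimensional volume of a convex body in $\hk{k}$ that misses $\zzk{k}$. Lattice-point-free convex bodies have unbounded volume (think of a long thin slab squeezed between consecutive lattice hyperplanes), and the paper itself supplies counterexamples to the precise product bound you derive from it: for $\Cycl{4}$ the core points $(1+m,-m,m,-m)^\tp$ of Example~\ref{ex:infinite-core-points-c4} have $\norm{\proj{z}{V}}$ constant but $\norm{\proj{z}{W}}$ growing linearly in $m$, so the inscribed ellipsoids of these lattice-free orbit polytopes have unbounded volume and $\prod_i \norm{\proj{z}{V_i}}^{\dim V_i}$ is unbounded along the family. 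So your third step, extracting the minimal factor from the product, rests on a false premise (note also that this is consistent with the theorem, which bounds only \emph{one} projection while others may grow). The fix is one of your own parenthetical alternatives: use the inscribed ball rather than the volume. Any ball in $\hk{k}$ of radius exceeding the covering radius of the lattice $\zz^n \cap \hk{0}$ (a translate of which is $\zzk{k}$, uniformly in $k$) contains a lattice point; since the minimal semi-axis of $E'$ is $\min_i \norm{\proj{z}{V_i}}/\sqrt{(n-1)\dim V_i}$, this directly bounds the \emph{minimal} projection by a constant $C(n)$, with no product extraction needed. This is essentially the paper's proof, where instead of the covering radius an integer point $u \in \zzk{k}$ of minimal norm is used as the witness: if every $\norm{\proj{z}{V_i}}$ were large, inequality~\eqref{eq:ellipsoid-contains-integer-point} would place $u$ inside $E' \subseteq P$, and uniformity in $k$ follows because $u$ depends only on $k \bmod n$.
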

\begin{proof}
 We use the two preceding theorems in this section to find a necessary condition under which the orbit polytope $P \defeq\conv \Gamma z$ contains integral points. We first consider $z\in\zzk{k}$ with a fixed $k$.
 By Theorem~\ref{thm:orbit-polytope-ellipsoid} there is a decomposition $\rr^n = \fix(\Gamma) \oplus \bigoplus_{i=1}^m V_i$  of $\rr^n$ into $\Gamma$-invariant subspaces related to the minimal enclosing ellipsoid of $P$.
 If $\norm{\proj{z}{V_i}} = 0$ for one subspace $V_i$, then nothing remains to be shown.
 So we assume that all projections $\proj{z}{V_i}$ have positive norm.
 Then the dimension of the polytope~$P$ is $n-1$.
 By Theorem~\ref{thm:orbit-polytope-ellipsoid} we know that the minimal enclosing ellipsoid of the orbit polytope $P$ is 
 $$\frac{k}{n}\vones + \left\{  x \in \hk{0} \;:\; \sum_{i=1}^m (\dim V_i) \frac{\| \proj{x}{V_i} \|^2}{\| \proj{z}{V_i} \|^2} \leq n-1 \right\}.$$
 By John's Theorem~\ref{thm:john-ellipsoid}, the polytope $P$ contains the following scaled ellipsoid:
 $$E' \defeq \frac{k}{n}\vones + \left\{  x \in \hk{0} \;:\; \sum_{i=1}^m (\dim V_i) \frac{\| \proj{x}{V_i} \|^2}{\| \proj{z}{V_i} \|^2} \leq \frac{1}{n-1} \right\}.$$
 Since the dimension of $P$ is $n-1$, the scaling factor is $\frac{1}{n-1}$ accordingly (also see Remark~\ref{rem:minimal-enclosing-ellipsoid-dim}).
  Next we derive conditions under which $E'$ and thus also $P$ contain an interior integer point.
 In this case $z$ cannot be a core point.

 Let $u\in\zzk{k} \subset \hk{k}$ be an integer point with minimal norm. 
 If for all subspaces $V_i$ the length of the projection $\norm{\proj{z}{V_i}}$ is large enough, then the following inequality is satisfied.
 \begin{equation}\label{eq:ellipsoid-contains-integer-point}
  \sum_{i=1}^m (\dim V_i) \frac{\norm{ \proj{u}{V_i} }^2}{\| \proj{z}{V_i} \|^2} \leq \frac{1}{n-1}
 \end{equation}
 Hence, in this case the ellipsoid $E'$ contains the integer point $u$.
 Then $u$ must also lie in $P$ by construction of~$E'$.
 For an estimation of when~\eqref{eq:ellipsoid-contains-integer-point} is fulfilled, let $u'\defeq u - \frac{k}{n}\vones$ be the orthogonal projection of $u$ onto $\hk{0}$.
 Because $\norm{\proj{u}{V_i}} \leq \norm{u'}$ and $\dim V_i \leq n-1$,
 inequality~\eqref{eq:ellipsoid-contains-integer-point} is satisfied if for all $i$ the projections satisfy
 \begin{equation}\label{eq:projection-length-global-bound}
  \norm{\proj{z}{V_i}}^2 \geq m(n-1)^2 \norm{u'}^2.
 \end{equation}

 As $u$ was chosen as an integer point in $\hk{k}$ with minimal norm, the bound in~\eqref{eq:projection-length-global-bound} depends only on the layer index $k$ and the dimension~$n$.
 However, since $u + l\vones$ has minimal norm in $\zzk{k+ln}$ for integers $l$, the bound really depends only on the value $k \bmod n$.
 For each $k\in\setOneTo{n}$ we get from~\eqref{eq:projection-length-global-bound} a constant $C(n,k)$ such that: $\norm{\proj{z}{V_i}} \geq C(n,k)$ for all $i$ implies $P$ contains an integer point.
 Since these are only finitely many layers, there exists a constant $C(n) \defeq \max_k C(n,k)$ as claimed in the theorem.
\end{proof}

Theorem~\ref{thm:core-points-close-to-invariant-subspace} remains valid under milder assumptions on~$\Gamma$.
It also holds when $\Gamma \leq \GL_n(\zz)$ is a finite group of unimodular matrices (see~\cite{ThomasPhD}).

\section{Precisely two invariant subspaces -- finitely many core points!}\label{sec:finite-coresets}

In this section we consider groups for which the orthogonal complement of~$\vones$ is irreducible.
Hence, these groups have precisely two invariant subspaces.
Recall that it suffices to study core points up to translation by~$\vones$ (see Remark~\ref{rem:corepoints-up-to-translation}).
It is an immediate consequence of Theorem~\ref{thm:core-points-close-to-invariant-subspace} that the considered groups have only finitely many core points up to translation (see the following Section~\ref{sec:finite-coresets-finiteness}).
Therefore all core points can be enumerated computationally.
In Section~\ref{sec:finite-coresets-outline} we give an overview of our exhaustive search.
The necessary mathematical equipment is provided in Sections~\ref{sec:finite-coresets-bounds} and \ref{sec:finite-coresets-tweaks}.
In Section~\ref{sec:finite-coresets-results} we discuss the results of our computational search for core points.

\subsection{Finiteness}\label{sec:finite-coresets-finiteness}

From Theorem~\ref{thm:core-points-close-to-invariant-subspace} it follows immediately that groups with precisely two invariant subspaces have only a finite number of core points up to translation.
By Cameron~\cite[Lemma~2]{MR0294471}, these are exactly the $2$-homogeneous groups.
\begin{corollary}\label{cor:FiniteCoreSets}
 If $\Gamma \leq \Symmet{n}$ is $2$-homogeneous, then the number of core points up to translation by~$\vones$ is finite.
\end{corollary}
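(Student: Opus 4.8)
The plan is to derive this corollary directly from Theorem~\ref{thm:core-points-close-to-invariant-subspace} together with Cameron's characterization. First I would invoke Cameron's Lemma~\cite[Lemma~2]{MR0294471}: a transitive group $\Gamma\leq\Symmet{n}$ acts irreducibly on $\vones^{\perp}$ if and only if $\Gamma$ is $2$-homogeneous. Since $\rr^n=\lin\vones\oplus\vones^{\perp}$ and $\lin\vones=\fix(\Gamma)$ is already irreducible, the hypothesis that $\Gamma$ is $2$-homogeneous means that $\Gamma$ has \emph{precisely} two irreducible invariant subspaces, namely $\lin\vones$ and $\vones^{\perp}$, and moreover every proper nontrivial invariant subspace must be one of these two.

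Next I would apply Theorem~\ref{thm:core-points-close-to-invariant-subspace}, which furnishes a constant $C(n)$ such that every core point $z$ with respect to $\Gamma$ admits a $\Gamma$-invariant subspace $V\neq\fix(\Gamma)=\lin\vones$ with $\norm{\proj{z}{V}}\leq C(n)$. Because the only invariant subspace different from $\lin\vones$ available is $V=\vones^{\perp}$, this forces $\norm{\proj{z}{\vones^{\perp}}}\leq C(n)$ for \emph{every} core point $z$. In other words, all core points lie within Euclidean distance $C(n)$ of the line $\lin\vones$.

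The finiteness then follows by a counting argument modulo translation by $\vones$. Writing $z=\proj{z}{\lin\vones}+\proj{z}{\vones^{\perp}}$, the component along $\lin\vones$ is an integer multiple of $\tfrac{1}{n}\vones$ (since $z$ is integral), and translating by suitable multiples of $\vones$ shifts $\proj{z}{\lin\vones}$ by integer multiples of $\vones$. Hence, up to translation by $\vones$, the $\lin\vones$-component is confined to a bounded fundamental domain, while the orthogonal component is pinned inside the ball of radius $C(n)$ by the previous step. Thus every core point is $\vones$-translation-equivalent to an integral point lying in a fixed bounded region of $\rr^n$, and a bounded region contains only finitely many points of the lattice $\zz^n$.

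I expect no serious obstacle here, since the corollary is essentially a direct reading of the preceding theorem once Cameron's equivalence pins down the invariant-subspace structure; the only point requiring a little care is the bookkeeping that reduces the unbounded $\lin\vones$-direction to a bounded fundamental domain under translation by $\vones$, so that boundedness in $\vones^{\perp}$ genuinely yields finitely many lattice points up to translation.
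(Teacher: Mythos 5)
Your proposal is correct and follows essentially the same route as the paper: Cameron's equivalence pins down $\vones^{\perp}$ as the unique nontrivial invariant subspace besides $\lin\vones$, Theorem~\ref{thm:core-points-close-to-invariant-subspace} confines every core point to a cylinder of radius $C(n)$ around $\lin\vones$, and this cylinder contains only finitely many integral points up to translation by~$\vones$. Your explicit bookkeeping of the $\lin\vones$-component as an integer multiple of $\tfrac{1}{n}\vones$ reduced to a fundamental domain merely spells out the last step the paper leaves implicit.
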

\begin{proof}
 If $\Gamma$ has only two invariant subspaces, then $\rr^n = \lin\vones \oplus V$ for an irreducible invariant subspace~$V$.
 Theorem~\ref{thm:core-points-close-to-invariant-subspace} then shows that every core point must have a ``small'' projection onto~$V$.
 Thus, every core point is contained in a cylinder with radius $C(n)$ around the fixed space $\lin\vones$.
 This cylinder contains only finitely many integral points up to translation by $\vones$.
\end{proof}

We conjecture (see Conjecture~\ref{conj:finite-coreset-iff-group-irreducible}) that the converse statement is true, that is, every transitive group that is not $2$-homogeneous has an infinite number of core points up to translation.
In Section~\ref{sec:infinite-coresets} we will investigate this conjecture more closely.

In the following proposition we estimate the constant $C(n)$ from the proof of Corollary~\ref{cor:FiniteCoreSets}.
To get a good estimate we also consider the dependency on the layer index~$k$.
\begin{proposition}\label{prop:core-set-cylinder}
 Let $\Gamma \leq \Symmet{n}$ be a $2$-homogeneous group.
 For a core point $z\in \zz^n$ with $\scp{z}{\vones} = k$ for $k \in [n-1]$ we have:
 $$\norm{ z - \proj{z}{\lin\vones} } < (n-1)\sqrt{\frac{k(n-k)}{n}}.$$
\end{proposition}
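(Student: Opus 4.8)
The plan is to specialize the argument from the proof of
Theorem~\ref{thm:core-points-close-to-invariant-subspace} to the
$2$-homogeneous case, where there is a single nontrivial invariant
subspace $V = \vones^{\perp}$ of dimension $n-1$. For such a group, if
$z$ is a core point with $\scp{z}{\vones}=k$, then its orbit polytope
lies in the hyperplane $\hk{k}$ and the projection onto $V$ is exactly
$z - \proj{z}{\lin\vones}$. The idea is to run the containment argument
for the scaled John ellipsoid $E'$ in reverse: instead of showing that a
\emph{large} projection forces an interior integer point, I want the
sharpest possible bound on $\norm{\proj{z}{V}}$ that is still compatible
with $z$ being a core point, and to track the dependence on $k$ rather
than maximizing over it.

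First I would recall from Theorem~\ref{thm:orbit-polytope-ellipsoid}
that with only one nontrivial summand $V$ of dimension $n-1$, the minimal
enclosing ellipsoid of $P = \conv(\Gamma z)$ is
$\frac{k}{n}\vones + \{x\in\hk{0} : (n-1)\norm{\proj{x}{V}}^2/\norm{\proj{z}{V}}^2 \leq n-1\}$,
i.e.\ simply the ball in $\hk{k}$ of radius $\norm{\proj{z}{V}}$ centered
at $\frac{k}{n}\vones$; John's Theorem~\ref{thm:john-ellipsoid} with the
dimension-$(n-1)$ scaling then gives that $P$ contains the concentric
ball $E'$ of radius $\norm{\proj{z}{V}}/(n-1)$. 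Next I would choose a
candidate integer point $u\in\zzk{k}$ and observe that if $u$ lies in
the open ball $E'$, then $u$ is an interior integral point of $P$
distinct from the vertices, contradicting that $z$ is a core point.
Concretely, $z$ being a core point forces
$\norm{u - \frac{k}{n}\vones} \geq \norm{\proj{z}{V}}/(n-1)$ for every
integer point $u$ in $\hk{k}$, so the bound reduces to producing an
integer point $u$ in the layer whose distance to the barycenter
$\frac{k}{n}\vones$ is controlled.

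The key quantitative step is therefore to find, for each $k\in[n-1]$, an
integer point $u\in\zzk{k}$ that is close to $\frac{k}{n}\vones$. The
natural choice is a $0/1$-vector with exactly $k$ ones, for which one
computes directly that $\norm{u - \frac{k}{n}\vones}^2 = k - k^2/n = k(n-k)/n$,
giving $\norm{u - \frac{k}{n}\vones} = \sqrt{k(n-k)/n}$. Feeding this into the
core-point inequality $\norm{\proj{z}{V}}/(n-1) \leq \norm{u - \frac{k}{n}\vones}$
yields exactly
$\norm{z - \proj{z}{\lin\vones}} = \norm{\proj{z}{V}} \leq (n-1)\sqrt{k(n-k)/n}$,
and the strict inequality in the statement comes from using the open
ball (so that such a $u$ lying strictly inside $E'$ would be an interior
point, which is excluded). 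I would also note the degenerate case where
$\norm{\proj{z}{V}}=0$: then $z$ lies on $\lin\vones$ and the bound is
trivially satisfied, so one may assume $\dim P = n-1$ as in the theorem.

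The main obstacle I anticipate is the boundary/strictness bookkeeping:
John's theorem only guarantees $P$ contains the \emph{closed} scaled
ellipsoid, so an integer point lying exactly on the boundary of $E'$ need
not be interior to $P$, and one must argue that the $0/1$ witness $u$ can
be taken to violate the bound strictly, or equivalently that equality in
the ellipsoid inequality still places $u$ in the interior of $P$ (for
instance because the minimal enclosing ellipsoid touches $P$ only at its
boundary while $E'$ is a strict shrink). Handling this carefully is what
converts the non-strict distance estimate into the strict inequality
claimed, and it is where I would spend the most care; the rest is the
elementary norm computation $\norm{u - \frac{k}{n}\vones}^2 = k(n-k)/n$
together with the reduction to the single invariant subspace.
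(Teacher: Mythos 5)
Your proposal is correct and takes essentially the same route as the paper's proof: specialize the bound~\eqref{eq:projection-length-global-bound} from Theorem~\ref{thm:core-points-close-to-invariant-subspace} to the single invariant subspace $V=\vones^{\perp}$ (so $m=1$ and the John ellipsoid is a ball of radius $\norm{\proj{z}{V}}$ in $\hk{k}$), take $u$ a $0/1$-vector with $k$ ones, and compute $\norm{\proj{u}{V}}^2 = k(n-k)/n$. The strictness bookkeeping you flag resolves exactly along the line you sketch — if equality held, the \emph{closed} ball $E'$ would still contain $u$, and $u$ cannot be a vertex of $P$ because its distance $\norm{\proj{z}{V}}/(n-1)$ to the barycenter is strictly smaller than the common vertex distance $\norm{\proj{z}{V}}$ (for $n\geq 3$), contradicting that $z$ is a core point — whereas the paper simply asserts the strict inequality~\eqref{eq:projection-length-transitive-bound} without comment.
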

\begin{proof}
 For the proof it is enough to obtain a value for the right hand side of~\eqref{eq:projection-length-global-bound} in the proof of Theorem~\ref{thm:core-points-close-to-invariant-subspace}.
 Let $V$ be the $(n-1)$-dimensional invariant subspace of~$\Gamma$.
 Note that $$\norm{\proj{z}{V}} = \norm{z-\proj{z}{\lin\vones}} = \norm{z - \frac{k}{n}\vones}.$$
 Since $V$ is the only invariant subspace besides the fixed space $\lin\vones$, the value of $m$ in \eqref{eq:projection-length-global-bound} equals one. 
 Therefore we have
 \begin{equation}\label{eq:projection-length-transitive-bound}
 \norm{ z - \frac{k}{n}\vones } = \norm{\proj{z}{V}} < (n-1) \norm{\proj{u}{V}}.
 \end{equation}
 The points $u$ with minimal projected norm in this case are the universal core points from Example~\ref{ex:universal-core-points}.
 For layer~$k$ we can choose $u$ to be any point with $k$ ones and $n-k$ zeros as coordinates.
 We compute
 $$\norm{\proj{u}{V}}^2 = \norm{u}^2 - \norm{\proj{u}{\lin\vones}}^2 = k - \norm{\frac{k}{n}\vones}^2 = k - \frac{k^2}{n} = \frac{k(n-k)}{n}.$$
 Using this value in~\eqref{eq:projection-length-transitive-bound} yields the inequality claimed in the proposition.
\end{proof}

\subsection{On how to determine all core points}\label{sec:finite-coresets-outline}

We now present one way to practically compute all core points of a $2$-homogeneous group up to translation.
Our computational results with this approach will be discussed in Section~\ref{sec:finite-coresets-results} for $2$-homogeneous groups of degree up to twelve.

For the core point enumeration two essential tasks are involved.
First, we need to determine a set of candidates which is large enough to cover all core points, and enumerate its elements up to $\Gamma$-symmetry. 
The quality of the set strongly relies on the quality of the bounds used for the computation.
The bound from Proposition~\ref{prop:core-set-cylinder} is not strong enough in general.
Therefore we use improved bounds that we develop in Section~\ref{sec:finite-coresets-bounds}.
For our core point enumeration we look at the (zero-based) integral points in the cubes $[0,n-3]^n$ for $2$-transitive groups and $[0,\floor{1.09(n-1)}]^n$ for the other $2$-homogeneous groups.
These numbers follow from Theorems~\ref{thm:box-width-bound} and~\ref{thm:box-width-bound-2homog}, respectively.
Note that, by Remark~\ref{rem:corepoints-up-to-translation} and Definition~\ref{def:zero-based}, it is enough to consider only zero-based core points, which leads to the aforementioned cubes.

The second task is to check for each candidate whether it is a core point or not. 
There are two natural ways to tackle this task.
One way is to set up a (mixed) integer program that is feasible if and only if an orbit polytope contains an integral point that is not a vertex (for details see \cite{ThomasPhD}).
Another way is to count the integral points in the orbit polytope and compare it to the number of vertices.
The candidate is a core point if and only if the two numbers coincide.
For our examples we chose the second approach (see Section~\ref{sec:finite-coresets-results}).

Dealing with problems that are NP-hard in general, the second task -- checking whether a candidate is a core point -- is the most time-consuming step in the computation.
Hence, additional criteria are necessary to exclude points from the expensive core point check upfront.
We use the following tweaks, which we will discuss in detail in the next sections.
\begin{itemize}
\item We can assume w.l.o.g. that each candidate $z$ is zero-based (see above) and that its first coordinate is minimal (by transitivity), i.e., $z_1 = 0$.
\item We skip the check for all universal core points. 
 Recall that they are core points with respect to \emph{every} subgroup of~$\Symmet{n}$, compare Example~\ref{ex:universal-core-points}.
 \item For $(k+1)$-transitive groups, Proposition~\ref{prop:core-point-layer-k-transitive} allows for the exclusion of
all integer points with layer index~$l$ for $(l\bmod n)\in\pm\{1,\dots,k\}$ since they
either are universal core points, or their orbit polytope contains one.
 \item All candidates~$z$ whose nonzero coordinates have a greatest common divisor~$\gcd>1$ can be excluded by Lemma~\ref{lem:cp-gcd}.
 \item Let $\Gamma'(z)$ be the stabilizer of the set of even coordinates of a candidate~$z$.
 We skip all candidates which are not constant on the orbits of $\Gamma'(z)$.
 This is justified by Lemma~\ref{lem:even-set-stab-criterion}.
 \item For $2$-transitive groups we check whether 
 $$\left(\sum_{i=1}^n z_i\right) \bmod {(n-1)} \leq \max z_i,$$
 which follows from Proposition~\ref{prop:knoerr-two-transitive}.
 \item Finally, we check whether the orbit polytope $\conv\Gamma z$ contains one of the universal core points (see Section~\ref{subsec:lp-universal-core-points}).
\end{itemize}
We give statistics about the combined power of all these criteria in Table~\ref{tab:coresets-cand-elimination}.

\subsection{Box width bounds}\label{sec:finite-coresets-bounds}

Proposition~\ref{prop:core-set-cylinder} already provides a bound for the distance of a core point~$z\in\hk{k}$ from its projection~$\frac{k}{n}\vones$ onto the fixed space.
This bound turns out to be weak, as shown by our results in Section~\ref{sec:finite-coresets-results}.
It also has the disadvantage that it is not straight-forward to enumerate all integral points inside a ball of a given radius.
In the following we show how to obtain stronger and more practical bounds, which are essential for the viability of 
our computations described in Section~\ref{sec:finite-coresets-results}.
These bounds will be in terms of the \emph{box width}~$\bw(z)$ which we define as
\[\bw(z) := \max_{i \in [n]} z_i - \min_{i \in [n]} z_i.\]
We start with the special case of $2$-transitive groups and come back to the more general case of $2$-homogeneous groups at the end of this section.

\begin{theorem}\label{thm:box-width-bound}
  Let $\Gamma \leq \Symmet{n}$ be a $2$-transitive group and let $n\geq 4$.
  Then $\bw(z) \leq n-3$ for every core point~$z$ with respect to~$\Gamma$.
\end{theorem}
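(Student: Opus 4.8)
The plan is to argue by contradiction: assuming $\bw(z)\ge n-2$, I would exhibit a lattice point in the orbit polytope $P\defeq\conv(\Gamma z)$ that is not a vertex, contradicting that $z$ is a core point. By Remark~\ref{rem:corepoints-up-to-translation} I may first translate $z$ so that $\min_i z_i=0$; then $\bw(z)=\max_i z_i\eqdef a$, the coordinates of $z$ lie in $\{0,1,\dots,a\}$, and both the value $0$ and a value $a\ge n-2$ are attained. Note that the projection bound of Proposition~\ref{prop:core-set-cylinder} is too weak here, since a large box width need not force a large Euclidean projection; so a genuinely combinatorial argument specific to $2$-transitivity is needed.

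The main engine is an averaging construction that uses only $2$-transitivity. For any position $p$, the point stabilizer $\Gamma_p$ acts transitively on $[n]\setminus\{p\}$, so averaging the $\Gamma_p$-orbit of $z$ produces
\[
 w_p \defeq \frac{1}{|\Gamma_p|}\sum_{\gamma\in\Gamma_p}\gamma z = z_p\, e_p + \frac{k - z_p}{n-1}\,(\vones - e_p)\in P, \qquad k \defeq \scp{z}{\vones},
\]
which is a convex combination of orbit points and hence lies in $P$, and which is integral exactly when $(n-1)\mid (k-z_p)$. A complementary source of lattice points comes from parities: any two distinct orbit elements $\gamma_1 z\ne\gamma_2 z$ with the same set of even coordinates have midpoint $\tfrac12(\gamma_1 z+\gamma_2 z)$ which is integral and, being the midpoint of two distinct vertices, is never itself a vertex of $P$. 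Whenever one of these constructions applies and the produced point is distinct from every vertex, it immediately yields the desired non-vertex lattice point.

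The crux is to show that $a\ge n-2$ forces at least one construction to succeed. Here I would run a residue/pigeonhole argument modulo $n-1$: because the $n$ coordinates include $0$ together with a value $a\ge n-2$, I expect to locate a position $p$ with $z_p\equiv k\pmod{n-1}$ (making $w_p$ integral) unless all coordinates are congruent modulo $n-1$, a case that can be settled directly; when the divisibility route is blocked, a parity coincidence among the many coordinate values should instead supply two equal-parity orbit elements. The degenerate near-simplex configurations — most notably $z=(0,c,\dots,c)$, where $w_p$ collapses back onto $z$ — escape both routes and would be handled by a separate explicit lattice-point construction inside the corresponding scaled simplex.

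The main obstacle I anticipate is exactly this final bookkeeping: simultaneously controlling integrality (the divisibility by $n-1$) and the non-vertex requirement, while isolating the few degenerate shapes of $z$ that defeat both the averaging and the midpoint arguments. Pinning down the exact constant $n-3$ rather than $n-2$ should hinge on using that a zero coordinate \emph{and} a coordinate equal to $a\ge n-2$ are both present, since this is what guarantees enough spread among the $n$ residues (or parities) for one of the two constructions to go through.
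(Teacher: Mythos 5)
There is a genuine gap, and it sits exactly where you anticipated trouble. Your averaging construction $w_p=\proj{z}{\fix(\Gamma_p)}$ is sound and is in fact the paper's starting point, but by averaging only the orbit of $z$ itself you realize just those points of $P\cap\fix(\Gamma_p)$ whose distinguished coordinate is an \emph{attained} value $z_p$; integrality then forces $(n-1)\mid(k-z_p)$ for some attained coordinate, and your pigeonhole dichotomy for securing this is false. The coordinates can occupy several residue classes modulo $n-1$ while still missing the class of $k$: for $n=6$ take $z=(0,4,1,1,3,3)$, so $\bw(z)=4=n-2$, $\sum_i z_i=12\equiv 2\pmod 5$, yet no coordinate is $\equiv 2\pmod 5$ and the coordinates are certainly not all congruent mod $5$. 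So the divisibility route is blocked outside your claimed exceptional case, and your fallback — a parity coincidence between two orbit elements — is precisely the paper's Lemma~\ref{lem:even-set-stab-criterion}, which there is only a computational \emph{tweak}, not a criterion proved to apply whenever the residue argument fails. As proposed, the "final bookkeeping" is not a routine matter of isolating a few degenerate shapes; it is the unproven core of the argument.

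The idea you are missing is a convexity step: by Lemma~\ref{lem:stabilizer-polytope}, $P\cap\fix(\Gamma')=\proj{P}{\fix(\Gamma')}$ for $\Gamma'=\stab_\Gamma(1)$, and this projection is the \emph{full segment} spanned by the projections of all orbit points, not just the finite set of points $w_p$. Proposition~\ref{prop:two-transitive-polytope} makes this explicit: $(k,l,\dots,l)^\tp\in P$ for \emph{every} real $k\in[0,\max z_i]$ with $l=\bigl(\sum_j z_j-k\bigr)/(n-1)$. Since $\max z_i\ge n-2$, you may always choose $k$ to be exactly the residue of $\sum_i z_i$ modulo $n-1$, which lies in $\{0,\dots,n-2\}$; then $l$ is an integer and an integral point of $P$ exists unconditionally — no pigeonhole, no parity fallback. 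What remains is your degenerate near-simplex worry, and the paper dispatches it not by an ad hoc construction but by Lemma~\ref{lem:cp-gcd}: if $(k,l,\dots,l)^\tp$ happens to be a vertex, zero-basedness of $z$ forces $k=0$ or $l=0$, whence $\gcd(k,l)\ge 2$ (using $\max z_i\ge 2$), so the orbit polytope of that vertex — contained in $P$ — is itself not lattice-free. Repairing your draft along these lines essentially reproduces the paper's proof via Proposition~\ref{prop:knoerr-two-transitive}.
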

For our proof of this theorem we use the following simple observation about intersection with and projection onto fixed spaces.
Remember that the orthogonal projection of $x$ onto $\fix(\Gamma)$ is given by the barycenter of its orbit:
\begin{equation}\label{eq:projection-fixed-space}
\proj{x}{\fix(\Gamma)} = \frac{1}{\card{\Gamma}} \sum_{\gamma\in\Gamma} \gamma x.
\end{equation}
\begin{lemma}\label{lem:stabilizer-polytope}
  Let $P \subset\rr^n$ be a polytope and $\Gamma\leq\Symmet{n}$ be a symmetry group of~$P$, i.e., $\Gamma P = P$.
  Then
  $$P \cap \fix(\Gamma) = \proj{P}{\fix(\Gamma)}.$$
\end{lemma}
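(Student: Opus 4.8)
The plan is to prove the two inclusions separately, noting that the inclusion $\proj{P}{\fix(\Gamma)} \subseteq P \cap \fix(\Gamma)$ is the substantive direction while the reverse is essentially immediate.

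First I would dispose of the easy inclusion $P \cap \fix(\Gamma) \subseteq \proj{P}{\fix(\Gamma)}$. If $x \in P \cap \fix(\Gamma)$, then $x$ is already fixed pointwise by $\Gamma$, so $\proj{x}{\fix(\Gamma)} = x$. Since $x \in P$, this exhibits $x$ as the projection of a point of $P$ lying in $\fix(\Gamma)$, giving $x \in \proj{P}{\fix(\Gamma)}$.

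For the reverse inclusion, take an arbitrary $x \in P$ and consider its projection $y \defeq \proj{x}{\fix(\Gamma)}$. The key identity is~\eqref{eq:projection-fixed-space}, which expresses $y$ as the barycenter of the orbit:
\[
y = \frac{1}{\card{\Gamma}} \sum_{\gamma \in \Gamma} \gamma x.
\]
The idea is that this exhibits $y$ as a convex combination (with uniform weights) of the points $\gamma x$, each of which lies in $P$ because $\Gamma P = P$ forces $\gamma x \in P$ for every $\gamma$. Since $P$ is convex, $y \in P$; and since $y$ is a projection onto $\fix(\Gamma)$, certainly $y \in \fix(\Gamma)$. Hence $y \in P \cap \fix(\Gamma)$, and as $x \in P$ was arbitrary this shows $\proj{P}{\fix(\Gamma)} \subseteq P \cap \fix(\Gamma)$.

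The only point requiring a little care — and the one I would single out as the main (though modest) obstacle — is justifying that the barycenter formula~\eqref{eq:projection-fixed-space} genuinely computes the orthogonal projection onto $\fix(\Gamma)$, rather than merely landing in $\fix(\Gamma)$. One checks that the averaging operator $A x \defeq \frac{1}{\card{\Gamma}}\sum_{\gamma} \gamma x$ is a linear map that fixes $\fix(\Gamma)$ pointwise and annihilates $\fix(\Gamma)^{\perp}$ (using that the representation is orthogonal, so $\fix(\Gamma)^{\perp}$ is $\Gamma$-invariant and the group average of a vector orthogonal to all fixed vectors is again orthogonal to them, forcing it to vanish); this identifies $A$ as the orthogonal projection. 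Since this is standard and~\eqref{eq:projection-fixed-space} is already asserted in the text, I would simply invoke it. With that in hand, combining both inclusions yields $P \cap \fix(\Gamma) = \proj{P}{\fix(\Gamma)}$, completing the proof.
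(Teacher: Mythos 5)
Your proof is correct and follows essentially the same route as the paper: the inclusion $P\cap\fix(\Gamma)\subseteq\proj{P}{\fix(\Gamma)}$ is immediate since fixed points are their own projections, and the substantive inclusion uses the barycenter formula~\eqref{eq:projection-fixed-space} to write the projection as a convex combination of points $\gamma x\in\Gamma P=P$, concluding by convexity. Your additional verification that the averaging operator really is the orthogonal projection is a standard fact the paper takes for granted, so the two arguments are in substance identical.
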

\begin{proof}
 For the ``$\subseteq$''-part let $x \in P \cap \fix(\Gamma)$.
 Since $x \in \fix(\Gamma)$, we have $x = \proj{x}{\fix(\Gamma)} \in \proj{P}{\fix(\Gamma)}$.
 For the reverse inclusion ``$\supseteq$'' let $y \in \proj{P}{\fix(\Gamma)}$.
 In particular, $y=\proj{x}{\fix(\Gamma)}$ is a convex combination of points $\gamma x$ in $\Gamma P$ by~\eqref{eq:projection-fixed-space}.
 Since $\Gamma P = P$ is convex, this implies that the point~$y$ lies in $P$.
\end{proof}
In words, the lemma states that projection to the fixed space equals intersection with the fixed space for symmetric polytopes.
Depending on how a polytope is presented, either by facets or by vertices, one of these two operations is easier to handle.
Since we are dealing with orbit polytopes, we naturally only have its vertices, so the projection is readily available.
Lemma~\ref{lem:stabilizer-polytope} allows us to find integral points in $P$, which may be difficult, 
by finding projections of integral points, which may be a much easier problem.
How easy it gets depends on the group we choose.
Consider an orbit polytope $\conv \Gamma z$.
If we intersect it with the fixed space $\fix(\Gamma)$, this leaves us with the vertex barycenter of the orbit polytope, which does not provide new information.
Thus, the goal is to find a subgroup $\Gamma' \lneq \Gamma$ with at least two but still a small number of orbits so that the projection is not trivial.
In particular, for $2$-transitive groups, which we focus on in this section, we can obtain a one-dimensional projection, using a subgroup with two orbits.
In such a line segment integral points are naturally easy to find.
Theorem~\ref{thm:box-width-bound} follows from the fact that if the line segment is wide enough, it -- and therefore also the original polytope -- contain an integer point, which is not a vertex.
To prove the main theorem, we start with an application of Lemma~\ref{lem:stabilizer-polytope} to $2$-transitive groups.

\begin{proposition}\label{prop:two-transitive-polytope}
	Let $\Gamma \leq \Symmet{n}$ be a $2$-transitive group and let $P \defeq \conv \Gamma z$ be the orbit polytope of some zero-based $z \in \zzp^n$.
	Then a point $p = (k,l,l,\dots,l)^\tp\in \rr^n$ for some $k,l \in \rr$ lies in $P$ if and only if the following two conditions are met:
	\begin{enumerate}[ref=(\roman{*}),label=(\roman{*})]
		\item $0 \leq k \leq \max z_i$,
		\item $l = \frac{(\sum_{j=1}^n z_j) - k}{n-1}$.
	\end{enumerate}
\end{proposition}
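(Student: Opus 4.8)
The plan is to apply Lemma~\ref{lem:stabilizer-polytope} to a carefully chosen subgroup of~$\Gamma$ whose fixed space is exactly the line through points of the shape $(k,l,\dots,l)^\tp$. Concretely, I would let $\Gamma_1 \defeq \stab_\Gamma(1)$ be the stabilizer of the first coordinate. Since $\Gamma$ is $2$-transitive, $\Gamma_1$ acts transitively on the remaining coordinates $\{2,\dots,n\}$, so $\Gamma_1$ has exactly two orbits on $[n]$, namely $\{1\}$ and $\{2,\dots,n\}$. Its fixed space $\fix(\Gamma_1)$ is therefore two-dimensional, spanned by $e_1$ and $\sum_{j\geq 2} e_j$; equivalently, $\fix(\Gamma_1)$ is exactly the set of vectors of the form $(k,l,\dots,l)^\tp$. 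This is the key reduction: membership of such a point $p$ in $P$ is equivalent to membership in $P \cap \fix(\Gamma_1)$.

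Next I would invoke Lemma~\ref{lem:stabilizer-polytope} with $\Gamma' = \Gamma_1$, which gives $P \cap \fix(\Gamma_1) = \proj{P}{\fix(\Gamma_1)}$. So I only need to describe the image of $P = \conv(\Gamma z)$ under orthogonal projection onto the two-dimensional space $\fix(\Gamma_1)$. Because projection onto a fixed space sends each vertex $\gamma z$ to the barycenter of its $\Gamma_1$-orbit (using~\eqref{eq:projection-fixed-space} for the group $\Gamma_1$), the projected vertex $\proj{(\gamma z)}{\fix(\Gamma_1)}$ has the form $(a,b,\dots,b)^\tp$ where $a = (\gamma z)_1$ is the first coordinate and $b = \frac{1}{n-1}\sum_{j\geq 2}(\gamma z)_j = \frac{(\sum_j z_j) - a}{n-1}$, since the total coordinate sum $\sum_j (\gamma z)_j = \sum_j z_j$ is $\Gamma$-invariant. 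This already forces condition~(ii): every point in the projection, hence every point of $P$ lying in $\fix(\Gamma_1)$, satisfies $l = \frac{(\sum_j z_j) - k}{n-1}$, because $k$ and $l$ are affinely linked through the constant coordinate sum.

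Finally I would pin down the admissible range of the first coordinate $k$. The projected polytope $\proj{P}{\fix(\Gamma_1)}$ is the convex hull of the projected vertices, and after substituting the affine relation for $b$ it becomes a one-dimensional segment parametrized by the first-coordinate value $a$. The extreme values of $a = (\gamma z)_1$ over all $\gamma \in \Gamma$ are $\min_i z_i = 0$ (as $z$ is zero-based) and $\max_i z_i$, both of which are attained by $2$-transitivity (indeed already by transitivity). Hence the segment is exactly $\{k : 0 \leq k \leq \max z_i\}$ together with the forced value of $l$, which is precisely the conjunction of~(i) and~(ii). I expect the only point requiring care to be the verification that the projection of the full orbit polytope is genuinely the segment spanned by these two extreme projected vertices and contains no points outside the stated $k$-range --- this follows since projection is linear and the first coordinate of the projection is a linear functional whose extrema over $\conv(\Gamma z)$ are attained at vertices, but it should be spelled out rather than taken for granted.
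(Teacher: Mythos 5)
Your proposal is correct and takes essentially the same approach as the paper: both pass to the point stabilizer $\stab_\Gamma(1)$, apply Lemma~\ref{lem:stabilizer-polytope} to convert intersection with $\fix(\stab_\Gamma(1))$ into a projection, and identify the projected orbit polytope as the line segment whose endpoints come from $\min_i z_i = 0$ and $\max_i z_i$, with condition~(ii) forced by the invariant coordinate sum. The only cosmetic difference is that the paper enumerates the projected vertices via a transversal $\{\gamma_1,\dots,\gamma_n\}$ with $\gamma_i(i)=1$ and writes the segment explicitly in the first two coordinates, whereas you argue via the first-coordinate linear functional attaining its extrema at vertices.
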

\begin{proof}
	The stabilizer $\Gamma'\defeq\stab_\Gamma(p) = \stab_\Gamma(1)$ of $p$ acts transitively on $\{2,\dots,n\}$ because $\Gamma$ is $2$-transitive.
	Let $\{\gamma_1, \dots, \gamma_n\}\subset \Gamma$ be a transversal for $\Gamma$ modulo $\Gamma'$, that is, $\gamma_i(i)=1$ for each $i \in [n]$.
	Thus, for every $\gamma_i$ we have that
	$$\proj{({\gamma_i}z)}{\fix(\Gamma')} = (z_i,r_i,r_i,\dots,r_i)^\tp \qquad \text{where} \quad r_i = \frac{1}{n-1} \sum_{j \in [n]\setminus\{i\}} z_j.$$
	Let $Q \defeq \proj{P}{\fix(\Gamma')}$ be the projection of $P$ onto the fixed space $\fix(\Gamma')$.
	It is the convex hull of vectors $\cin{q}{i} \defeq \proj{({\gamma_i}z)}{\fix(\Gamma')}$ for $i\in[n]$.
	All these vectors lie in a one-dimensional affine subspace of $\rr^n$, so~$Q$ is a line segment.
  By Lemma~\ref{lem:stabilizer-polytope} the point $p \in\fix(\Gamma')$ lies in $P$ if and only if it lies in the projection~$Q$.
  
	Let $a$ be such that $z_a = \min_{i \in [n]} z_i = 0$ and let $b$ be such that $z_b = \max_{i \in [n]} z_i$.
	With this setting we know that $\cin{q}{a}$ and $\cin{q}{b}$ are end points of $Q$ because of the respective minimality and maximality of $z_a$ and $z_b$.
  To simplify notation we project on the first two coordinates, which are sufficient.
  We identify $Q$ with the line segment $Q' \subset \rr^2$, given as the convex hull of $\cin{q'}{a} = (0, r_a)^\tp$ and $\cin{q'}{b}=(z_b, r_b)^\tp$.
  As inequality description we obtain $$Q' = \left \{ (x_1,x_2)^\tp \in \rr^2 \;:\; 0 \leq x_1 \leq z_b \quad\text{and}\quad x_1 + (n-1)x_2 = \sum_{j=1}^n z_j \right \}.$$
	Hence, the polytope $Q'$	contains a point $u=(u_1, u_2)^\tp \in \rr^2$ if and only if $0 \leq u_1 \leq z_b$ and $u_2 = \frac{1}{n-1} (\sum_{j=1}^n z_j) - \frac{u_1}{n-1}$.
	Because the point $p$ of the proposition projects onto $(k,l)\in \rr^2$, the claim of the proposition follows.
\end{proof}

A simple observation for which points cannot be core points is the following lemma.

\begin{lemma}\label{lem:cp-gcd}
 Let $z\in \zz^n$ be a 
 core point for a group $\Gamma\leq \Symmet{n}$.
 If $z \notin \fix(\Gamma)$, then $\gcd(z_1,\dots,z_n) = 1$.
\end{lemma}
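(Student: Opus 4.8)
The plan is to argue by contraposition: assuming that $d := \gcd(z_1,\dots,z_n)$ exceeds $1$, I would exhibit an integer point in the orbit polytope $\conv(\Gamma z)$ that is not an orbit point, thereby contradicting the core point property. The hypothesis $z\notin\fix(\Gamma)$ enters right at the start: it guarantees that the orbit is not a single point, so there exists some $\gamma\in\Gamma$ with $\gamma z\neq z$. This element is precisely what will let me produce a genuinely new lattice point between two orbit vertices.

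The construction is as follows. I would set $w := \tfrac1d z$, which lies in $\zz^n$ because $d$ divides every coordinate of $z$, and then consider
\[
p := z + \gamma w - w = \Bigl(1-\tfrac1d\Bigr) z + \tfrac1d\,\gamma z .
\]
The first expression shows that $p\in\zz^n$, since $z,w,\gamma w$ are all integral. The second expression exhibits $p$ as a convex combination of the two orbit points $z$ and $\gamma z$ with coefficient $\tfrac1d\in(0,1)$, so that $p\in\conv(\Gamma z)$. Moreover $p$ equals neither endpoint: $p=z$ would force $\gamma w=w$, hence $\gamma z=z$, and $p=\gamma z$ would force $(d-1)w=(d-1)\gamma w$, again giving $\gamma z=z$; both contradict the choice of $\gamma$. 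Thus $p$ sits strictly between $z$ and $\gamma z$.

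It remains to check that this interior point $p$ is not itself an orbit point, which is the one delicate step. Here I would use that the permutation representation acts by isometries fixing the orbit barycenter $b=\proj{z}{\fix(\Gamma)}$ from~\eqref{eq:projection-fixed-space}: for every $\sigma\in\Gamma$ one has $\norm{\sigma z-b}=\norm{\sigma(z-b)}=\norm{z-b}=:R$, so the whole orbit $\Gamma z$ lies on the sphere of radius $R$ about $b$, and $R>0$ exactly because $z\notin\fix(\Gamma)$ forces $z\neq b$. Since $p$ lies strictly between the two distinct sphere points $z$ and $\gamma z$, strict convexity of the Euclidean norm yields $\norm{p-b}<R$. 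Hence $p$ is off the sphere and cannot belong to $\Gamma z$.

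Putting these together, $p\in\conv(\Gamma z)\cap\zz^n$ but $p\notin\Gamma z$, so $\conv(\Gamma z)\cap\zz^n\supsetneq\Gamma z$ and $z$ fails to be a core point. The only part requiring any real thought is the last separation of $p$ from the orbit; everything else is a direct verification. One could phrase that step equivalently by observing that the equidistance from $b$ makes every orbit point a \emph{vertex} of $\conv(\Gamma z)$, so that the point $p\in\relint[z,\gamma z]$, lying in the relative interior of an edge, is not a vertex and therefore not in $\Gamma z$.
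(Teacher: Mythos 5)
Your proposal is correct and follows essentially the same route as the paper: the paper also argues by contraposition, picks $\gamma\in\Gamma$ with $\gamma z\neq z$ using $z\notin\fix(\Gamma)$, and exhibits the integral point $\frac{c-1}{c}z+\frac{1}{c}\gamma z$ as a non-trivial convex combination of two vertices of $\conv(\Gamma z)$. The only difference is that you spell out the final step (via the sphere around the orbit barycenter and strict convexity of the norm) that the paper leaves implicit when asserting the combination is not a vertex.
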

\begin{proof}
 Let $z \in \zz^n$ 
 have $c\defeq\gcd(z_1,\dots,z_n) > 1$.
 In order to prove the lemma we show that such a point~$z$ is not a core point.
 Because $z \notin \fix(\Gamma)$ by assumption of the lemma, there is a permutation $\gamma\in \Gamma$ with $\gamma z \neq z$.
 Then $\frac{c-1}{c} z + \frac{1}{c} \gamma z$ is an integral, non-trivial convex combination of two vertices of $\conv \Gamma z$.
 Hence, $\conv \Gamma z$ is not lattice-free and $z$ is not a core point.
\end{proof}

Theorem~\ref{thm:box-width-bound} will follow from the following proposition.
The previous Proposition~\ref{prop:two-transitive-polytope} showed that we can find integral points in a polytope by finding integral points on a line segment in $\rr^2$ with slope $(n-1):1$.
The following proposition due to Knörr~\cite{KnoerrSketch} quantifies the condition under which the induced line segment contains an integral point.
It is also interesting in its own right because it states a necessary criterion for core points which is stronger than the box width alone.

\begin{proposition}\label{prop:knoerr-two-transitive}
 Let $\Gamma \leq \Symmet{n}$ be a $2$-transitive group with $n\geq 3$.
 Let $z \in \zzp^n$ be zero-based with $\max z_i \geq 2$.
 If $$\left(\sum_{i=1}^n z_i\right) \bmod {(n-1)} \leq \max z_i,$$ then $\conv \Gamma z$ is not lattice-free.
\end{proposition}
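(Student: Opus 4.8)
The plan is to locate, inside the orbit polytope $P \defeq \conv \Gamma z$, a lattice point of the special shape $p = (r, l, \dots, l)^\tp$ whose membership in $P$ is governed by Proposition~\ref{prop:two-transitive-polytope}, and then to argue that this point does not already lie in the orbit $\Gamma z$. Before doing so I would dispose of a degenerate situation by means of Lemma~\ref{lem:cp-gcd}: if $\gcd(z_1, \dots, z_n) > 1$, then since $z$ is zero-based with $\max_i z_i \ge 2$ it is non-constant and hence $z \notin \fix(\Gamma) = \lin\vones$, so Lemma~\ref{lem:cp-gcd} already shows that $P$ is not lattice-free. Thus I may assume from now on that $\gcd(z_1, \dots, z_n) = 1$.

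Next I would write $S \defeq \sum_{i=1}^n z_i$ and set $r \defeq S \bmod (n-1)$, so that $0 \le r \le n-2$ (here $n \ge 3$ makes this meaningful), together with $l \defeq \frac{S-r}{n-1} \in \zzp$. The hypothesis of the proposition is precisely $r \le \max_i z_i$, so the point $p = (r, l, \dots, l)^\tp$ meets both conditions of Proposition~\ref{prop:two-transitive-polytope} with $k = r$: condition (i) because $0 \le r \le \max_i z_i$, and condition (ii) by the very choice of $l$. Therefore $p \in P$, and by construction $p \in \zz^n$, so $p$ is a genuine lattice point of the orbit polytope.

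It remains to check that $p \notin \Gamma z$, which would immediately give $P \cap \zz^n \supsetneq \Gamma z$, i.e. that $P$ is not lattice-free in the sense of Definition~\ref{def:corepoint}. I expect this final step to be the only delicate point. If $p$ were an orbit point, then its coordinate multiset $\{r, l, \dots, l\}$ would coincide with that of $z$, forcing $z$ to take at most two distinct values; being zero-based with maximum $M \defeq \max_i z_i \ge 2$, these two values must be $0$ and $M$, and matching the multiplicity pattern (one exceptional coordinate against $n-1$ equal ones) forces $z$ to equal $M e_i$ or $M(\vones - e_i)$ for some $i$. In either case $\gcd(z_1, \dots, z_n) = M \ge 2$, contradicting the reduction already made. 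Hence $p$ lies in $P$ outside the orbit, and $\conv \Gamma z$ is not lattice-free. The crux of the argument is therefore not the production of $p$ -- which is immediate once Proposition~\ref{prop:two-transitive-polytope} is in hand -- but the verification that $p$ is genuinely a \emph{new} lattice point rather than an orbit vertex; this verification breaks down exactly on the two-valued configurations and is rescued by the gcd reduction.
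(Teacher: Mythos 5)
Your proof is correct and takes essentially the same route as the paper: both construct the integral point $p=(r,l,\dots,l)^\tp$ via Proposition~\ref{prop:two-transitive-polytope} and fall back on Lemma~\ref{lem:cp-gcd} exactly when $p$ threatens to lie in the orbit of $z$, which happens only for the two-valued configurations $Me_i$ and $M(\vones-e_i)$. The only (cosmetic) difference is the order of the gcd argument: you reduce to $\gcd(z_1,\dots,z_n)=1$ upfront and rule out $p\in\Gamma z$ directly, whereas the paper applies Lemma~\ref{lem:cp-gcd} to $p$ itself after observing that $p\in\Gamma z$ forces $k=0$ or $l=0$.
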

\begin{proof}
 Let $k\in \{0,\ldots, n-2\}$ be congruent to $\sum_{i=1}^n z_i \bmod {(n-1)}$.
 Then $l \defeq \frac{(\sum_{i=1}^n z_i) - k}{n-1}$ is an integer.
 By Proposition~\ref{prop:two-transitive-polytope} the integral point $p = (k,l,\dots,l)^\tp$ lies in $P\defeq\conv \Gamma z$ because $0 \leq k \leq \max z_i$.
 The point $p$ is a vertex of $P$ if and only if $p$ is in the orbit of~$z$.
 If $p$ is not a vertex, then $P$ is not lattice-free and we are done.
 So suppose that $p$ is a vertex of $P$.
 Because $z$ is zero-based, this can happen only if
 $l = 0$ or $k = 0$.
 In these two cases we still have to find an integer point in $P$ which is not a vertex.
 Note that in both cases we must have $\gcd(p_1,\dots,p_n) = \gcd(k,l) \geq 2$ because of our assumption $\max z_i \geq 2$.
 Thus, Lemma~\ref{lem:cp-gcd} implies that $\conv \Gamma p$ is not lattice-free and therefore $\conv \Gamma z \supseteq \conv \Gamma p$ is not lattice-free. 
\end{proof}

With this proposition we are able to prove the maximal box width of core points for $2$-transitive groups.
\begin{proof}[Proof of Theorem~\ref{thm:box-width-bound}]
  It suffices to prove the theorem for zero-based points because the box width is not affected by translation by $\vones$.
  Let $z \in \zzp^n$ be zero-based with $\max z_i \geq n-2 \geq 2$.
  We have to show that $z$ is not a core point.
  It holds that $\sum_{i=1}^n z_i \bmod {(n-1)} \leq \max z_i$ because the remainder of $\sum_{i=1}^n z_i$ after division by $n-1$ lies in $\{0,1,\dots,n-2\}$.
  Thus, Proposition~\ref{prop:knoerr-two-transitive} ensures that the orbit polytope $\conv \Gamma z$ is not lattice-free.
  Hence, $z$ is not a core point and the claim of the theorem follows.
\end{proof}

For $2$-homogeneous groups the situation is more complicated than for the $2$-transitive groups.
We can start similarly and study the projection of orbit polytopes onto the fixed space $\fix(\stab_\Gamma(1))$.
Because this fixed space has dimension three (see~\cite[Lemma~2]{MR0294471}), the resulting projected polytope is in general not a line segment but a two-dimensional polygon.
For two-dimensional polytopes, determining the vertices and integral points is not as trivial as in the one-dimensional case.
Using the classification of $2$-homogeneous, not $2$-transitive permutation groups (see~\cite{MR0306296}) and the flatness theorem in dimension two (see~\cite{MR1060014}), one can still obtain the following upper bound on the box width.
Its proof is quite technical so we just state the result here and refer to~\cite{ThomasPhD} for details.
\begin{theorem}[\cite{ThomasPhD}]\label{thm:box-width-bound-2homog}
 Let $\Gamma \leq \Symmet{n}$ be a $2$-homogeneous group.
 Then $\bw(z) < 1.09\,(n-1)$ for every core point~$z$ with respect to~$\Gamma$.
\end{theorem}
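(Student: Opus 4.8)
The plan is to run the same projection argument as for $2$-transitive groups, but now the relevant fixed space is three- rather than two-dimensional, so the projected polytope is a planar polygon and the one-dimensional ``integer point on a segment'' step of Proposition~\ref{prop:knoerr-two-transitive} is replaced by the flatness theorem in the plane. As in the proof of Theorem~\ref{thm:box-width-bound}, it suffices to treat zero-based $z \in \zzp^n$, and we may assume $\Gamma$ is not $2$-transitive, since that case is already covered. Put $\Gamma' \defeq \stab_\Gamma(1)$. Because $\Gamma$ is $2$-homogeneous but not $2$-transitive, Cameron's Lemma~2 in~\cite{MR0294471} tells us that $\Gamma'$ has exactly three orbits on $[n]$, namely $\{1\}$ together with two further orbits $O_1,O_2$ of equal size $m = (n-1)/2$; hence $\fix(\Gamma')$ is three-dimensional. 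Writing $P \defeq \conv\Gamma z$ and $Q \defeq \proj{P}{\fix(\Gamma')}$, the polygon $Q$ lies in the affine plane $\fix(\Gamma') \cap \hk{k}$, where $k = \scp{z}{\vones}$, and by Lemma~\ref{lem:stabilizer-polytope} a point of $\fix(\Gamma')$ lies in $P$ exactly when it lies in $Q$. The integer points constant on the $\Gamma'$-orbits form a rank-two affine lattice $L$ in this plane, so producing a non-vertex point of $L \cap Q$ certifies that $z$ is not a core point.

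Next I would make $Q$ and $L$ explicit. A point of $\fix(\Gamma')$ is determined by its values $(a;b;c)$ on the three orbits, with $a + m(b+c) = k$ on $\hk{k}$, so $L$ is identified with $\Z^2$ in the coordinates $(b,c)$. For the transversal $\gamma_i$ with $\gamma_i(i)=1$ the projected vertex $q^{(i)} = \proj{(\gamma_i z)}{\fix(\Gamma')}$ has $a$-coordinate $z_i$ and $(b,c)$-coordinates equal to the averages of $\gamma_i z$ over $O_1$ and $O_2$; thus the vertex set of $Q$, written in the lattice coordinates $(b,c)$, is $\{(b_i,c_i) : i \in [n]\}$ with $b_i + c_i = (k - z_i)/m$. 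Using Kantor's classification of $2$-homogeneous, not $2$-transitive groups~\cite{MR0306296} --- these are the affine groups over $\ff q$ whose two non-trivial $\Gamma'$-orbits are the quadratic residues and non-residues --- one can write the $b_i, c_i$ explicitly as residue and non-residue partial sums of $z$. In particular the width of $Q$ along $(1,1)$, i.e. the spread of $b+c$, equals $\bw(z)/m = 2\,\bw(z)/(n-1)$, since $z$ is zero-based.

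The heart of the argument is a lower bound on the \emph{lattice width} of $Q$ with respect to $L$. By the $2$-dimensional flatness theorem~\cite{MR1060014}, a planar convex body whose lattice width exceeds $1 + 2/\sqrt3$ must contain an interior lattice point. I would therefore estimate the width of $Q$ in every primitive lattice direction from below in terms of $\bw(z)$. The width along $(1,1)$ is already controlled; the delicate part is the transverse direction, whose width is governed by the difference between the quadratic-residue and non-residue sums of the translates of $z$. Showing this transverse width cannot collapse --- so that $Q$ is not a thin needle --- is what forces $Q$ to be genuinely two-dimensional and rules out a lattice-free long-and-thin polygon. Once all directional widths are bounded below, $\bw(z) \geq 1.09\,(n-1)$ makes the lattice width exceed $1 + 2/\sqrt3$, so $Q$, and hence $P$, contains an interior lattice point $p$; if $p$ happens to be a vertex of $P$, one eliminates this degenerate case as in the proof of Proposition~\ref{prop:knoerr-two-transitive} via Lemma~\ref{lem:cp-gcd}.

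I expect the transverse-width estimate to be the main obstacle: it is precisely the step that depends on the fine number-theoretic structure of the residue/non-residue partition from~\cite{MR0306296}, and it is where the constant $1.09$ is extracted by balancing the flatness bound $1 + 2/\sqrt3$ against the worst-case shape of $Q$ over Kantor's family. Because this case analysis is lengthy, I would, as the authors do, carry out the general projection and flatness reduction in full and defer the detailed width computation to~\cite{ThomasPhD}.
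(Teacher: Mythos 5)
Your proposal follows essentially the same route as the paper, which in fact gives no proof of Theorem~\ref{thm:box-width-bound-2homog} at all but only the identical outline---projection onto the three-dimensional fixed space $\fix(\stab_\Gamma(1))$, a planar projected polygon in place of the line segment of Proposition~\ref{prop:two-transitive-polytope}, Kantor's classification~\cite{MR0306296} of $2$-homogeneous non-$2$-transitive groups, and the two-dimensional flatness theorem~\cite{MR1060014}---before deferring the ``quite technical'' details to~\cite{ThomasPhD}. Your sketch matches that outline step for step, correctly computes the width $2\,\bw(z)/(n-1)$ in the $(1,1)$-direction of the orbit-average lattice coordinates, and honestly isolates the transverse lattice-width estimate over the residue/non-residue structure as the hard step from which the constant $1.09$ is extracted, deferring it exactly as the paper does.
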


\subsection{Tweaks to speed up computations}\label{sec:finite-coresets-tweaks}

\subsubsection{A parity tweak}

\begin{lemma}\label{lem:even-set-stab-criterion}
  Let $\Gamma \leq \Symmet{n}$ be a transitive permutation group and $z\in \Z^n$.
  Consider the set $E$ of indices corresponding to the even coordinates of~$z$, that is, $E = \{i\in\setOneTo{n} \;:\; z_i \equiv 0 \bmod 2\}$.
  Let $\Gamma'(z)\leq\Gamma$ be the set-stabilizer of~$E$.
  Further, let~$\mathcal{I}$ be the partition of $[n]$ into orbits
  under~$\Gamma'(z)$. If any of the orbits $O\in\mathcal{I}$ contains two indices
  $k,l\in O$ such that~$z_k$ is not equal to~$z_l$, then the point~$z$ is not a core point with respect to $\Gamma$.
\end{lemma}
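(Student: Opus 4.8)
The plan is to exhibit, under the stated hypothesis, an integral point in $\conv(\Gamma z)$ that is distinct from every vertex, thereby showing $z$ is not a core point. The natural candidate is the projection of $z$ onto the fixed space of the set-stabilizer $\Gamma'(z)$, in the spirit of Lemma~\ref{lem:stabilizer-polytope}. First I would set $p \defeq \proj{z}{\fix(\Gamma'(z))}$, which by~\eqref{eq:projection-fixed-space} is the barycenter $\frac{1}{\card{\Gamma'(z)}}\sum_{\gamma\in\Gamma'(z)}\gamma z$. Applying Lemma~\ref{lem:stabilizer-polytope} to the orbit polytope $\conv(\Gamma z)$ and its symmetry subgroup $\Gamma'(z)$ gives $p \in \conv(\Gamma z)$, since $p$ lies in both $\fix(\Gamma'(z))$ and $\proj{\conv(\Gamma z)}{\fix(\Gamma'(z))}$.

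The key observation is that $p$ is constant on each orbit $O \in \mathcal{I}$ of $\Gamma'(z)$, and its value there is the average of the $z_j$ over $j \in O$. The heart of the argument is to show this average is an \emph{integer}: the coordinates $z_j$ for $j$ ranging over a single orbit $O$ all share the same parity. Indeed, $\Gamma'(z)$ is the set-stabilizer of the even-coordinate index set $E$, so it permutes $E$ among itself and permutes its complement among itself; hence each orbit $O$ lies entirely inside $E$ or entirely inside its complement, meaning all $z_j$ with $j\in O$ are simultaneously even or simultaneously odd. Therefore the $z_j$ over $O$ are congruent modulo~$2$, and I would need to argue their mean is integral. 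The cleanest route is to pair up the averaging: for a finite set of integers of common parity, the mean need not be an integer in general (e.g. three odd numbers), so the parity alone does not suffice. Instead I would use that $p$ is a \emph{group} average: $p = \frac{1}{\card{\Gamma'(z)}}\sum_{\gamma}\gamma z$ already lies in $\zz^n$ precisely when it is integral, and the hypothesis that $z$ is non-constant on some orbit is what forces $p \neq z$.

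Here lies the main obstacle and the correct fix: rather than claiming $p$ itself is integral, I would round appropriately or, better, replace the group-average by a \emph{two-point} argument. The hypothesis supplies an orbit $O$ and indices $k,l\in O$ with $z_k \neq z_l$; choose $\gamma\in\Gamma'(z)$ with $\gamma(k)=l$ (possible since $k,l$ lie in one $\Gamma'(z)$-orbit). Since $z_k$ and $z_l$ have the same parity, $z_k - z_l$ is a nonzero even integer, so $\tfrac{1}{2}(z + \gamma z)$ is an \emph{integral} vector: coordinatewise it equals $\tfrac{1}{2}(z_i + z_{\gamma^{-1}(i)})$, and $i$ and $\gamma^{-1}(i)$ always lie in a common $\Gamma'(z)$-orbit and hence have $z$-values of equal parity, making each coordinate integral. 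This midpoint $\tfrac{1}{2}(z+\gamma z)$ is a nontrivial convex combination of the two distinct vertices $z$ and $\gamma z$ of $\conv(\Gamma z)$ (distinct because $z_k\neq z_l = z_{\gamma(k)}$ forces $\gamma z\neq z$), so it is an integral non-vertex point of the orbit polytope.

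Assembling these steps: the midpoint lies in $\conv(\Gamma z)\cap\zz^n$ but is not in $\Gamma z$, so $\conv(\Gamma z)$ is not lattice-free and $z$ is not a core point. The step I expect to require the most care is verifying that $\tfrac{1}{2}(z+\gamma z)$ is genuinely integral in \emph{every} coordinate, which rests on the fact that $\gamma\in\Gamma'(z)$ preserves the parity partition so that $z_i$ and $z_{\gamma^{-1}(i)}$ are always congruent modulo~$2$; this in turn hinges on $\Gamma'(z)$ being the set-stabilizer of $E$ rather than an arbitrary subgroup. I would state this parity-preservation as the crucial lemma-within-the-proof and check that both $z_i$ and $z_{\gamma(i)}$ land in $E$ together or outside $E$ together.
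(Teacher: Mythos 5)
Your final argument is correct and essentially identical to the paper's proof: choose $\gamma\in\Gamma'(z)$ carrying $k$ to $l$, observe that each $\Gamma'(z)$-orbit lies wholly inside $E$ or its complement so that $\tfrac{1}{2}(z+\gamma z)$ is integral, and conclude it is a proper convex combination of the distinct vertices $z$ and $\gamma z$, hence a non-vertex integral point of $\conv(\Gamma z)$. Your initial barycenter/projection detour was rightly abandoned (the group average indeed need not be integral), and the two-point midpoint argument you settle on is exactly what the paper does.
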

\begin{proof}
  Let~$k, l\in O$ be two indices in the same orbit $O \in \mathcal{I}$ with $z_k\neq
  z_l$.
  Since~$\Gamma'(z)$ acts transitively on $O$, there exists a
  permutation~$\gamma\in\Gamma'(z)$ such that the $l$-th coordinate of $\gamma
  z$ is equal to~$z_k\neq z_l$. Since the coordinates corresponding to indices in every orbit in $\mathcal I$ are either all even or all odd, the point
  $z'\coloneqq\frac{1}{2}z+\frac{1}{2}\gamma z$ is integral. Furthermore, it is a proper
  convex combination, as $z\neq\gamma z$. Hence, the integer point~$z'$ is contained in
  the orbit polytope of~$z$ without being a vertex, thus~$z$ is not a core point.
\end{proof}

Note that Lemma~\ref{lem:even-set-stab-criterion} also holds with respect to odd instead of even coordinates.
In order to use the lemma in the candidate enumeration, it is necessary to compute the orbits of the set-stabilizers of all subsets of $[n]$ up to $\Gamma$-symmetry in a preprocessing step.
However, using a software package like~\cite{GAP} the computation time for this task is negligibly small.
The lemma is particularly effective if the set stabilizers have large orbits (so that many coordinates must have the same value).
By a result of Seress~\cite{MR1468057}, many small $2$-transitive groups are exceptional in the sense that no set stabilizer is trivial, i.e., has at least one orbit of size two (and usually many more).

\subsubsection{Restriction on layer indices for $k$-transitive groups}

We can generalize the argument behind Proposition~\ref{prop:two-transitive-polytope} to groups of higher transitivity.
The following proposition shows that transitivity enforces that core points with ``small'' layer index~$k$ must be universal core points.
For the enumeration in Section~\ref{sec:finite-coresets-results} we can thus skip these layers.

\begin{proposition}\label{prop:core-point-layer-k-transitive}
Let $\Gamma \leq \Symmet{n}$ be a $(k+1)$-transitive group with $k\geq 1$.
Then the only core points with respect to $\Gamma$ in $\zzk{l}$, for $l \bmod n$ congruent to an index in $\{0,\dots,k\}\cup\{n-k,\dots,n\}$, 
are universal core points.
\end{proposition}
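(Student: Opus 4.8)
The plan is to prove the contrapositive: every core point lying in one of these layers has box width at most one, hence is universal. I begin with a normalization. Being a core point and being universal are both invariant under translation by multiples of $\vones$ (Remark~\ref{rem:corepoints-up-to-translation}), which shifts $l$ by multiples of $n$; moreover the central reflection $z\mapsto -z$ preserves both properties while replacing the residue $l\bmod n$ by $-l\bmod n$, so it carries the range $\{n-k,\dots,n\}$ into $\{0,\dots,k\}$. Thus it suffices to treat a core point $z$ with $\sum_i z_i = j$ for some $0\le j\le k$, which I arrange by translating with $\tfrac{j-l}{n}\vones$.

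Assume first that $1 \le j \le k$ and, for contradiction, that $z$ is a core point with $\bw(z)\ge 2$. The candidate non-vertex integer point is the universal core point $u := \sum_{t=1}^{j} e_t$ (Example~\ref{ex:universal-core-points}); since the box width is $\Gamma$-invariant and $\bw(u)\le 1 < 2 \le \bw(z)$, we have $u\notin\Gamma z$, so it suffices to show $u\in\conv\Gamma z$. Generalizing the projection idea of Proposition~\ref{prop:two-transitive-polytope}, let $\Gamma':=\stab_\Gamma(1,\dots,j)$. As $\Gamma$ is $(k+1)$-transitive and $j\le k$, it is $(j+1)$-transitive, so $\Gamma'$ acts transitively on $\{j+1,\dots,n\}$; hence $\fix(\Gamma')=\{x: x_{j+1}=\dots=x_n\}$ and $u\in\fix(\Gamma')$. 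By Lemma~\ref{lem:stabilizer-polytope} we have $u\in\conv\Gamma z$ if and only if $u\in\proj{(\conv\Gamma z)}{\fix(\Gamma')}$, and by~\eqref{eq:projection-fixed-space} the latter is the convex hull of the projected orbit points $\proj{(\gamma z)}{\fix(\Gamma')}$. The projection preserves the coordinate sum $j$, so each projected point is determined by its first $j$ coordinates, which (using $j$-transitivity) range over exactly the tuples $(z_{i_1},\dots,z_{i_j})$ with distinct indices $i_1,\dots,i_j$. Hence $u\in\conv\Gamma z$ is equivalent to the diagonal point $(1,\dots,1)\in\R^j$ lying in the convex hull of all these tuples.

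This convex hull is symmetric under permuting the $j$ coordinates, so it contains $(1,\dots,1)$ exactly when $1$ lies between the average of the $j$ smallest and the average of the $j$ largest coordinates of $z$; equivalently, the $j$ smallest coordinates sum to at most $j$ and the $j$ largest sum to at least $j$. The first inequality is immediate, since the average of the $j$ smallest coordinates is at most the overall average $\tfrac jn\le 1$. The second inequality is the main point, and it is where I expect the only real work: if the $j$ largest coordinates summed to at most $j-1$, then the complementary $n-j$ (smallest) coordinates would sum to at least $1$, so their maximum would be a positive integer, forcing every one of the $j$ largest coordinates to be at least $1$ and hence their sum to be at least $j$---a contradiction. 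With both inequalities established, $u\in\conv\Gamma z$, so $\conv\Gamma z$ is not lattice-free and $z$ is not a core point.

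It remains to dispatch $j=0$. If such a core point $z$ were not universal, then $\bw(z)\ge 2$ gives $z\notin\fix(\Gamma)$. Here $\fix(\Gamma')=\fix(\Gamma)=\lin\vones$, and by Lemma~\ref{lem:stabilizer-polytope} together with~\eqref{eq:projection-fixed-space}, $\conv\Gamma z$ meets $\lin\vones$ in the single barycenter $\tfrac1n(\sum_i z_i)\vones=0$; thus $0$ is an integer point of $\conv\Gamma z$, and it is not a vertex since $z\notin\fix(\Gamma)$ forces $0\notin\Gamma z$, a contradiction. Undoing the translation and reflection reductions, this shows that every core point in the stated layers is universal, completing the proof. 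The sole genuine obstacle is the extremal inequality bounding the sum of the $j$ largest coordinates from below; the projection step and the normalizations are routine once Lemma~\ref{lem:stabilizer-polytope} is in place.
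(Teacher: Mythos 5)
Your proposal is correct and follows essentially the same route as the paper's proof: after normalizing the layer index, you place the universal core point $u$ (which is exactly the paper's point $v$ with $c=0$) inside $\conv \Gamma z$ via Lemma~\ref{lem:stabilizer-polytope}, and your two extremal inequalities on the sums of the $j$ smallest and $j$ largest coordinates are precisely the paper's inequalities~\eqref{eq:sum-first-k} and~\eqref{eq:sum-last-k}, proved by the same sorting contradiction. The only cosmetic differences are that you project onto the fixed space of the pointwise stabilizer $\stab_\Gamma(1,\dots,j)$ and then symmetrize a second time over $\Symmet{j}$, where the paper uses the setwise stabilizer $\stab_\Gamma(\{1,\dots,k\})$ to collapse to a line segment in one step, and that you make explicit the reflection $z \mapsto -z$ handling the residues $\{n-k,\dots,n\}$ (left implicit in the paper) and replace the paper's zero-based vertex check by the cleaner box-width invariance argument.
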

\begin{proof}
 Let $z \in \zzp^n$ be zero-based and $\max z_i \geq 2$, otherwise $z$ is universal.
 To keep the index notation simple we may assume that $z$ is sorted non-decreasingly.
 If $z$ is not already sorted, we relabel the coordinates.
 Further, we write $N$ for the layer index $N=N(z)\defeq\scp{\vones}{z} = \sum_{i=1}^n z_i$.
 To prove the proposition, it is enough to show that every such $z$ with $N \equiv k \bmod n $ is not a core point because every $(k+1)$-transitive group is $k$-transitive.
 In the following we prove that $P\defeq\conv \Gamma z$ is not lattice-free by using Lemma~\ref{lem:stabilizer-polytope}.
 More precisely, we show that $P$ contains 
 \begin{equation}\label{eq:special-vector-v}
 v = (\underbrace{c+1,\dots,c+1}_{k~\text{times}},\underbrace{c,\dots,c}_{n-k~\text{times}})
 \end{equation}
 for $c = \left\lfloor \frac{N}{n} \right\rfloor$.
 Note that $v$ is contained in the fixed space $\fix(\Gamma')$ of the set stabilizer $\Gamma' \defeq \stab_\Gamma(\{1,\dots,k\})$.
 By Lemma~\ref{lem:stabilizer-polytope} it suffices to prove that $v$ is contained in the projection $Q\defeq\proj{P}{\fix(\Gamma')}$ in order to ensure that $v$ lies in $P$.
 
Because the group $\Gamma$ is $(k+1)$-transitive, the stabilizer $\Gamma'$ acts transitively on the sets $\{1,\dots,k\}$ and $\{k+1,\dots,n\}$.
Thus, the projection of an $x$ onto the fixed space is given by $\proj{x}{\fix(\Gamma')} = (R(x),\dots,R(x),S(x),\dots,S(x))^\tp$ with $R(x) \defeq \frac{1}{k}\sum_{i=1}^k x_i$ and $S(x) \defeq \frac{1}{n-k}\sum_{i=k+1}^n x_i$.
Therefore, $Q$ is a line-segment that is contained in the hyperplane~$\hk{N} = \{x\in\rr^n\;:\;\scp{\vones}{x}=N\}$.
In the following we show the existence of two points $x, y \in Q$ with $R(x) \leq R(v) \leq R(y)$.
By our initial assumption we have $N = cn+k = (n-k)c + k(c+1)$ and thus $v\in\hk{N}$.
Hence, the existence of such $x$ and $y$ implies that $v$ lies on the line-segment~$Q$.
Our next step is to show that
\begin{align}
 \label{eq:sum-first-k} \sum_{i=1}^k z_i &\leq k(c+1)\qquad{\text{and}}\\
 \label{eq:sum-last-k} \sum_{i=n-k+1}^n z_i &\geq k(c+1).
\end{align}
After we have established these inequalities, we immediately obtain the desired points $x$ and $y$ as follows.
From the first equation~\eqref{eq:sum-first-k} we get that $R(z) \leq (c+1) = R(v)$.
Because $\Gamma$ is $k$-transitive, there is a permutation $\gamma\in\Gamma$ that maps $\{n-k+1,\dots,n\}$ to $\{1,\dots,k\}$.
Thus, we obtain $R(v) = (c+1) \leq R(\gamma z)$ from~\eqref{eq:sum-last-k}.
This shows that the choice $x = z$ and $y = \gamma z$ satisfies our requirements.

It remains to show that inequalities \eqref{eq:sum-first-k} and \eqref{eq:sum-last-k} actually hold.
For a contradiction assume that $\sum_{i=1}^k z_i > k(c+1)$.
Since $z$ is sorted, this implies $z_k \geq c+2$ and thus $$N = \sum_{i=1}^n z_i = \sum_{i=1}^k z_i + \sum_{i=k+1}^n z_i > k(c+1) + (n-k)(c+2) > N.$$
We get a similar contradiction by assuming that $\sum_{i=n-k+1}^n z_i < k (c+1)$.
This implies $z_{n-k+1} \leq c$ and thus $$N = \sum_{i=1}^n z_i = \sum_{i=1}^{n-k} z_i + \sum_{i=n-k+1}^n z_i  < (n-k)c + k(c+1) = N.$$
Therefore the inequalities \eqref{eq:sum-first-k} and \eqref{eq:sum-last-k} must hold.

Thus, we have shown that $v \in Q$ and therefore also $v \in P$.
We still have to prove that $v$ is not a vertex of $P$, i.e., $v$ is not in the orbit of $z$.
Because $z$ is zero-based, the point~$v$ can only be a vertex of~$P$ if $c=0$.
Otherwise, all coordinates of $v$ are non-zero by choice of $v$ in~\eqref{eq:special-vector-v}.
So we can assume that $c=0$.
In this case we have $\max z_i = c+1 = 1$, which we have ruled out by our initial assumption.
Hence, $v$ is not a vertex of~$P$.
\end{proof}

A simple corollary of this proposition is the following.
However, a similar statement for general $2$-homogeneous groups is false as the computer search in Section~\ref{sec:finite-coresets-results} shows.
\begin{corollary}
If $\Omega \leq \Symmet{n}$ is $2$-transitive, then all core points in the layers with index $1$ and $n-1$ are universal core points.
\end{corollary}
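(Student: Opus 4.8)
The plan is to obtain this statement as the $k=1$ specialization of Proposition~\ref{prop:core-point-layer-k-transitive}. First I would observe that a $2$-transitive group $\Omega$ is precisely a $(k+1)$-transitive group with $k=1$, so the hypothesis $k\geq 1$ of that proposition is met with $\Gamma = \Omega$. Setting $k=1$, the index set appearing in the proposition, namely $\{0,\dots,k\}\cup\{n-k,\dots,n\}$, collapses to $\{0,1\}\cup\{n-1,n\}$.

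Next I would verify that the two layers named in the corollary fall into this set. The layer of index $1$ has $l=1$, so $l\bmod n = 1$, and the layer of index $n-1$ has $l=n-1$, so $l\bmod n = n-1$; both residues $1$ and $n-1$ lie in $\{0,1\}\cup\{n-1,n\}$. Hence Proposition~\ref{prop:core-point-layer-k-transitive} applies verbatim and tells us that the only core points in $\zzk{1}$ and $\zzk{n-1}$ are universal core points, which is exactly the asserted claim.

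I do not anticipate any genuine obstacle: the corollary is a direct reading-off of the proposition once $k$ is fixed, and the sole thing to record is the trivial membership check $1,\,n-1 \in \{0,1\}\cup\{n-1,n\}$. If anything, the subtlety lies not in the proof but in resisting an overreach — the sentence preceding the corollary flags that the analogous clean statement is \emph{false} for general $2$-homogeneous groups, so the argument must genuinely use $2$-transitivity (through the proposition's transitivity hypothesis) rather than mere $2$-homogeneity.
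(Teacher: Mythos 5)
Your proposal is correct and matches the paper exactly: the paper presents this statement as an immediate consequence of Proposition~\ref{prop:core-point-layer-k-transitive} (a $2$-transitive group is $(k+1)$-transitive with $k=1$, and the residues $1$ and $n-1$ lie in $\{0,1\}\cup\{n-1,n\}$), which is precisely your argument. Your closing remark about $2$-transitivity being genuinely needed --- as opposed to mere $2$-homogeneity --- is also consistent with the paper's comment that the analogous statement fails for general $2$-homogeneous groups.
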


\subsubsection{Selective probing}\label{subsec:lp-universal-core-points}

If none of the other, easily testable criteria excluded a given candidate, we apply a last heuristic before we call the computationally expensive lattice point enumeration.
For this we choose a selection of \glqq probing points\grqq\ to check wether a 
given candidate $z$ is a core point. 
For each point $z$ which is not a core point the orbit polytope $\conv \Gamma z$ is likely to contain one of the core points already approved.
The check whether a specific point is contained in $\conv\Gamma z$ can be done by solving one linear program, see for instance \cite{PolyFAQ}.
In order to keep the computational effort within reasonable limits,
it is of course advisable to choose only a selection of already approved core points, 
if there are too many.
For our enumeration, for instance, we checked
whether the orbit polytope of a candidate contains one of the universal core points.
These are a natural choice since they are the closest points to the vertex barycenter of the orbit polytope.
Probing for them enabled us to eliminate a large number of candidates (see Table~\ref{tab:coresets-cand-elimination}).

\subsection{Computational results}\label{sec:finite-coresets-results}

We now present the results of our exhaustive computer search based on the strategy described in the previous sections.
To enumerate all core points of all $2$-homogeneous groups with degree up to twelve we  implemented the core point enumeration using the polymake framework~\cite{polymake,polymake-2000}.
An overview of these groups is shown in Table~\ref{tab:coresets-irred-groups}.
The column ``Id'' is a composition of the group degree and the \texttt{PrimitiveIdentification}-id of the group as assigned by the library of primitive groups of~\cite{GAP}.

\begin{table}[htb]
\caption{$2$-homogeneous groups up to degree 12}
  \label{tab:coresets-irred-groups}
  \renewcommand{\arraystretch}{0.9}
  \begin{tabular*}{\linewidth}{@{\extracolsep{\fill}}rrrrr@{}}
    \toprule
    Id & Order & Structure & Transitivity & Homogeneity\\
    \midrule
    5-3 & 20& $\AGL(1,5)$ & 2 & 5  \\
\midrule
6-1 & 60& $\PSL(2,5)$& 2 & 2  \\
6-2 & 120& $\PGL(2,5)$& 3 & 6 \\
\midrule
7-3 & 21& $\Cycl{7}\rtimes\Cycl{3}$& 1 & 2  \\
7-4 & 42& $\AGL(1, 7)$& 2 & 2  \\
7-5 & 168& $\LL(3, 2)$& 2 & 2  \\
\midrule
8-1 & 56& $\AGL(1, 8)$& 2 & 3  \\
8-2 & 168& $\AGGL(1, 8)$& 2 & 3  \\
8-3 & 1344& $\ASL(3, 2)$& 3 & 3  \\
8-4 & 168& $\PSL(2, 7)$& 2 & 3  \\
8-5 & 336& $\PGL(2, 7)$& 3 & 3  \\
\midrule
9-3 & 72& $M_9$& 2 & 2  \\
9-4 & 72& $\AGL(1,9)$& 2 & 2  \\
9-5 & 144& $\AGGL(1, 9)$& 2 & 2  \\
9-6 & 216& $3^2$:(2'A(4))& 2 & 2  \\
9-7 & 432& $\AGL(2, 3)$& 2 & 2  \\
9-8 & 504& $\PSL(2, 8)$& 3 & 9  \\
9-9 & 1512& $\PGGL(2, 8)$& 3 & 9  \\
\midrule
10-3 & 360& $\PSL(2, 9)$& 2 & 2  \\
10-4 & 720& $\PGL(2, 9)$& 3 & 3 \\
10-5 & 720& $\Symmet{6}$& 2 & 2  \\
10-6 & 720& $M_{10}$& 3 & 3  \\
10-7 & 1440& $\PGGL(2, 9)$& 3 & 3  \\
\midrule
11-3 & 55& $\Cycl{11}\rtimes\Cycl{5}$& 1 & 2  \\
11-4 & 110& $\AGL(1, 11)$& 2 & 2  \\
11-5 & 660& $\LL(2, 11)$& 2 & 2  \\
11-6 & 7920& $M_{11}$& 4 & 4  \\
\midrule
12-1 & 7920& $M_{11}$& 3 & 3  \\
12-2 & 95040& $M_{12}$& 5 & 5  \\
12-3 & 660& $\PSL(2, 11)$& 2 & 3  \\
12-4 & 1320& $\PGL(2, 11)$& 3 & 3  \\

    \bottomrule
  \end{tabular*}
\end{table}
Regarding the core point search, Table~\ref{tab:theoretical-bounds} shows that there is a vast number of core point candidates in the cube induced by our theoretical bound on the box width.
\begin{table}[htb]
\begin{threeparttable}
\caption{Theoretical maximal bounds for $2$-transitive groups}\label{tab:theoretical-bounds}
 \begin{tabular*}{\linewidth}{@{\extracolsep{\fill}}rrrr@{}}
 \toprule
   dim $n$ & \#integral points in $[0,\bw]^n$ & $\bw$ & distance to $\lin\vones$\\
   \midrule
5 & 243 & 2 & 4.38\\
6 & 4\,096 & 3 & 6.12\\
7 & 78\,125 & 4 & 7.86\\
\tnote{a}7 & 823\,543 & 6 & 7.86\\
8 & 1\,679\,616 & 5 & 9.90\\
9 & 40\,353\,607 & 6 & 11.93\\
10 & 1\,073\,741\,824 & 7 & 14.23\\
11 & 31\,381\,059\,609 & 8 & 16.51\\
\tnote{a}11 & 285\,311\,670\,611 & 10 & 16.51\\
12 & 1\,000\,000\,000\,000 & 9 & 19.05\\
\bottomrule
 \end{tabular*}
 \begin{tablenotes}
   \small
   \item [a] for the $2$-homogeneous case, for which the larger $\bw$-bound applies
  \end{tablenotes}
\end{threeparttable}
\end{table}

Table~\ref{tab:coresets-cand-elimination} illustrates the progress of our candidate elimination towards the actual set of core points.
\begin{table}[ht]
\begin{threeparttable}
 \caption{Candidate elimination}\label{tab:coresets-cand-elimination}
 \renewcommand{\arraystretch}{0.9}
  \begin{tabular*}{\linewidth}{@{\extracolsep{\fill}}rrrrrr@{}}
    \toprule
group id & tweaks & probing & core points & max $\bw$ & max dist to $\lin\vones$\\
    \midrule
    5-3 & 0 & 0 & 0 & -- & -- \\
\midrule
6-1 & 0 & 0 & 0 & -- & -- \\
6-2 & 0 & 0 & 0 & -- & -- \\
\midrule
7-3 & 63\,077 & 12 & 10 & 3 & 2.62\\
7-4 & 10 & 1 & 1 & 2 & 1.85\\
7-5 & 3 & 2 & 2 & 2 & 1.93\\
\midrule
8-1 & 1\,797 & 4 & 4 & 2 & 1.97\\
8-2 & 20 & 1 & 1 & 2 & 1.97\\
8-3 & 3 & 1 & 1 & 2 & 1.97\\
8-4 & 10 & 2 & 2 & 2 & 1.97\\
8-5 & 2 & 0 & 0 & -- & -- \\
\midrule
9-3 & 21\,666 & 20 & 20 & 3 & 2.75 \\
9-4 & 21\,691 & 20 & 18 & 3 & 2.75 \\
9-5 & 529 & 10 & 10 & 3 & 2.75 \\
9-6 & 68 & 3 & 3 & 2 & 2.05 \\
9-7 & 32 & 3 & 3 & 2 & 2.05 \\
9-8 & 5 & 0 & 0 & -- & -- \\
9-9 & 5 & 0 & 0 & -- & -- \\
\midrule
10-3 & 514 & 8 & 8 & 2 & 2.37 \\
10-4 & 31 & 2 & 2 & 2 & 2.12 \\
10-5 & 164 & 6 & 6 & 2 & 2.37 \\
10-6 & 53 & 4 & 4 & 2 & 2.12 \\
10-7 & 31 & 2 & 2 & 2 & 2.12 \\
\midrule
11-3 & \tnote{a} 266\,982 & 2\,546 & 2\,407 & 6 & 5.80 \\
11-4 & 9\,352\,389 & 231 & 208 & 4 & 3.77 \\
11-5 & 4\,285 & 11 & 11 & 2 & 2.76 \\
11-6 & 16 & 2 & 2 & 2 & 2.17 \\
\midrule
12-1 & 128 & 4 & 4 & 2 & 2.58 \\
12-2 & 11 & 1 & 1 & 2 & 2.22 \\
12-3 & 21\,580\,154 & 15 & 15 & 4 & 3.30 \\
12-4 & 7\,252 & 2 & 2 & 2 & 2.22 \\

    \bottomrule
  \end{tabular*}
  \begin{tablenotes}
   \small
   \item [a] number after two-dimensional IPs; see text for an explanation
  \end{tablenotes}
\end{threeparttable}
\end{table}
We introduce the table by columns.
The first column shows the id of the group; this is the same as in Table~\ref{tab:coresets-irred-groups}.
The second column ``tweaks'' gives the number of all actually enumerated candidates, using all necessary bounds and tweaks from Sections~\ref{sec:finite-coresets-bounds} and \ref{sec:finite-coresets-tweaks}, but without selective probing.
The number of candidates shown in this column is much smaller than the number of integral points in the cube $[0,n-3]^n$ for $2$-transitive groups and $[0,\floor{1.09(n-1)}]^n$ for $2$-homogeneous, not $2$-transitive groups (cf. Table~\ref{tab:theoretical-bounds}).
This demonstrates the combined power of all the ``small'' necessary criteria displayed above.
For the groups for which at least one set stabilizer is trivial (7\nobreakdash-3, 9\nobreakdash-3, 9\nobreakdash-4, 11\nobreakdash-3, 11\nobreakdash-4, 12\nobreakdash-3) the number of candidates is much higher than for the other groups (cf.~\cite{MR1468057}).

Note that among the considered groups there are two that are $2$-homogeneous but not $2$-transitive.
These occur in dimension seven and eleven only (groups 7-3 and 11-3).
To these groups we cannot apply Proposition~\ref{prop:knoerr-two-transitive} to eliminate candidates.
For the group with id 11-3, this leaves us with 1\,331\,476\,291 candidates.
This number is too large to proceed to the actual core point checks.
To exclude candidates fast we implemented a test based on Lemma~\ref{lem:stabilizer-polytope}.
We project each orbit polytope $P$ onto the three-dimensional fixed space $\fix(\stab_\Gamma(1))$.
The corresponding projected polytope $Q$ is two-dimensional.
We can find integral points in $Q$ quickly after a relatively cheap convex hull computation.
As integer points in $Q$ correspond to integer points in $P$, this allows to eliminate core point candidates without constructing the complete orbit polytope.
This reduced the number of candidates to under 300\,000 without too much computational effort.
More details can be found in \cite{ThomasPhD}.

The third column of Table~\ref{tab:coresets-cand-elimination} ``probing'' shows the number of candidates that remain after selective probing, that is, the number of orbit polytopes that do not contain a universal core point.
The fourth column ``core points'' lists the number of actual non-universal core points as confirmed by actually enumerating all integral points in the orbit polytopes.
For this final check we use Normaliz \cite{normaliz,bis-2012} via its interface to polymake.
Comparing the third and fourth columns of Table~\ref{tab:coresets-cand-elimination}, we see that the number of candidates after selective probing is already very close to the number of actual non-universal core points.
This shows that a concise description of those that survive probing, i.e., of those points whose orbit polytopes do not contain universal core points, would probably make core point enumeration much easier.

The fifth column of Table~\ref{tab:coresets-cand-elimination} ``max $\bw$'' contains the maximal box width of a core point.
The sixth column ``max dist to $\lin\vones$'' shows the maximal distance of a core point from the fixed space.
Comparing these last two columns to the last two columns of Table~\ref{tab:theoretical-bounds}, we see that the bounds from Theorems~\ref{thm:box-width-bound} and~\ref{thm:box-width-bound-2homog} (for the box width) and Proposition~\ref{prop:core-set-cylinder} (for the cylinder radius) have room for improvement.

The polytopes of all core points of the groups from Table~\ref{tab:coresets-irred-groups} are available in the polymake-format at
\begin{center}
\url{http://www.polymake.org/polytopes/core-point-polytopes/}.
\end{center}

\section{More than two invariant subspaces -- infinitely many core points?}\label{sec:infinite-coresets}

In the previous section we showed that $2$-homogeneous groups (those with precisely two invariant subspaces) have a finite number of core points (up to translation by~$\vones$).
For other groups there may be an infinite number of core points.
For instance, for all integers $m\in\zz$ the point $(1+m,-m,m,-m)^\tp$ is a core point of the cyclic group $\Cycl{4}$ as we will see later (cf.~Example~\ref{ex:invariant-subspace-C6-continued}).
Figure~\ref{fig:orbit-polytope-C4} visualizes parts of this infinite sequence, showing orthogonal projections of the lattice-free orbit tetrahedra for $0\leq m\leq 4$.
\begin{figure}[htb]
\begin{center}
\includegraphics[scale=0.19]{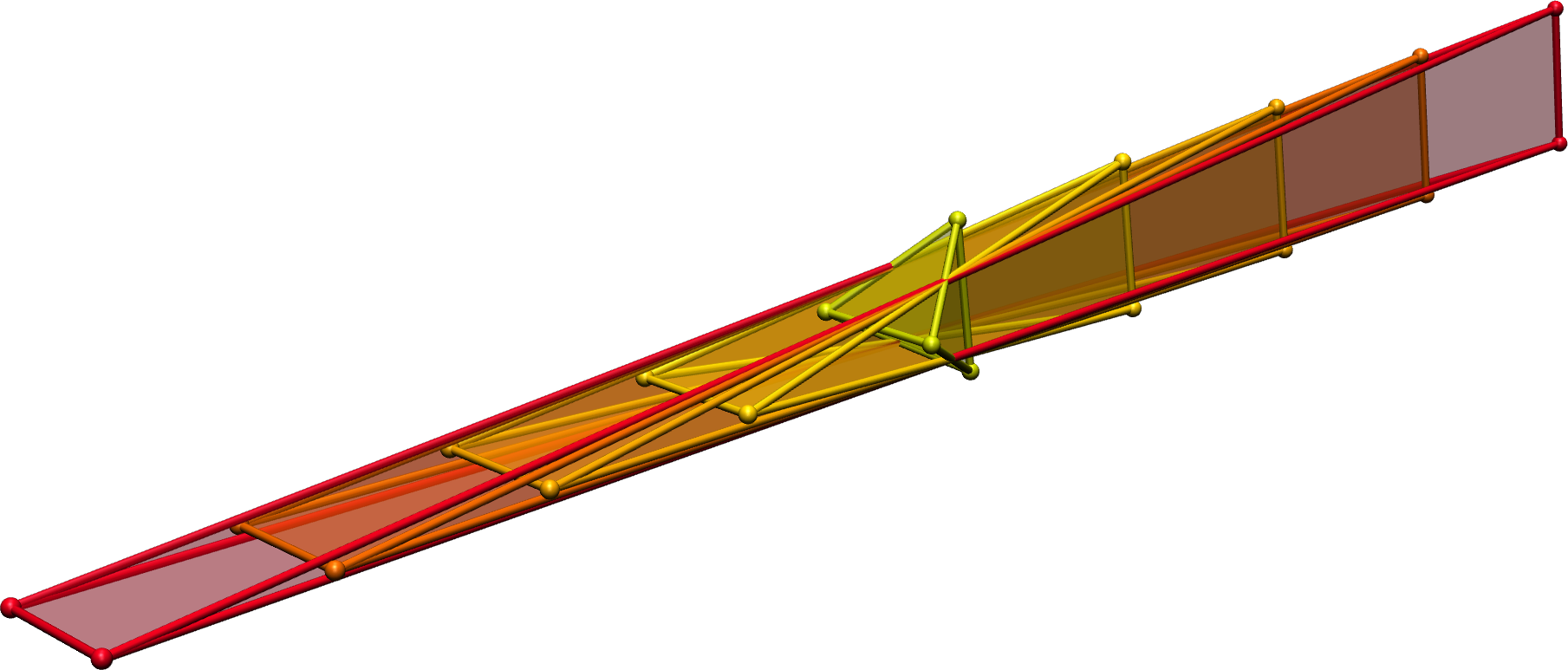}
\end{center}
\caption{Impression of an infinite sequence of lattice-free orbit polytopes for $\Cycl{4}$}\label{fig:orbit-polytope-C4}
\end{figure}
In this section we construct similar infinite sequences of core points (up to translation by~$\vones$) for two major classes of groups.
These constructions, together with our computational experiments, suggest the following conjecture.
\begin{conjecture}\label{conj:finite-coreset-iff-group-irreducible}
  A transitive permutation group $\Gamma$ has a finite number of core points up to translation~by $\vones$ if and only if $\Gamma$ is $2$-homogeneous.
\end{conjecture}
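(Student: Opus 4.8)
The forward implication is already settled: if $\Gamma$ is $2$-homogeneous, then Corollary~\ref{cor:FiniteCoreSets} gives finiteness. So the plan is to establish the converse, namely that a transitive group $\Gamma$ that is \emph{not} $2$-homogeneous has infinitely many core points modulo $\lin\vones$. By Cameron~\cite[Lemma~2]{MR0294471}, being non-$2$-homogeneous means exactly that $\vones^{\perp}$ is reducible, so we may fix a decomposition $\rr^n = \lin\vones \oplus V_1 \oplus \cdots \oplus V_r$ into irreducible $\Gamma$-invariant subspaces with $r \geq 2$. The goal is to exhibit an explicit one-parameter family, modelled on the sequence $(1+m,-m,m,-m)^\tp = (1,0,0,0)^\tp + m\,(1,-1,1,-1)^\tp$ for $\Cycl{4}$, where the added vector $(1,-1,1,-1)^\tp$ spans a proper invariant subspace of $\vones^{\perp}$.

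Concretely, I would fix a base core point $z_0 \in \zz^n$ and a nonzero integral vector $w$ lying in one of the proper invariant subspaces, say $w \in V_1 \cap \zz^n$, and set $z_m \defeq z_0 + m\,w$ for $m \in \zz$. Since $w \in \vones^{\perp}\setminus\{0\}$, we have $w \notin \lin\vones$, so the points $z_m$ are pairwise distinct modulo $\lin\vones$; it therefore suffices to show that $\conv(\Gamma z_m)$ is lattice-free for infinitely many $m$. Because $V_1$ is invariant, $\gamma z_m = \gamma z_0 + m\,\gamma w$, so the orbit polytope deforms along the orbit $\Gamma w \subset V_1$ while its projections onto $\lin\vones$ and onto $V_2 \oplus \cdots \oplus V_r$ remain independent of $m$. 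The heart of the argument is to choose $w$ so that no integral point other than the vertices ever enters the polytope as $m$ grows.

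For lattice-freeness I would use Theorem~\ref{thm:core-points-close-to-invariant-subspace} as an \emph{a priori} structural constraint: any integral point of $\conv(\Gamma z_m)$ has bounded projection onto some invariant subspace, and since the transverse projections of the vertices do not depend on $m$, one hopes to force any such integral point to share the exact coordinate pattern of a vertex, hence to be a vertex. The two structured cases make this precise by supplying a \emph{coarser} symmetry to project onto, via Lemma~\ref{lem:stabilizer-polytope}. For imprimitive groups one takes $w$ constant on the blocks of imprimitivity, so that $w$ lives in the block-induced invariant subspace and the integral points of $\conv(\Gamma z_m)$ are governed by the action on blocks. For groups with an irrational invariant subspace $V$ one exploits irrationality directly: an integral point of the polytope projects onto $V$ as a convex combination of the vertex-projections, and the absence of a rational spanning set for $V$ excludes every such combination except those giving the vertices themselves.

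The main obstacle is \emph{uniformity} across all non-$2$-homogeneous transitive groups. Without a combinatorial handle (blocks) or an arithmetic handle (irrationality), it is unclear how to prevent the appearance of sporadic interior lattice points for special residues of $m$; ruling these out depends delicately on the arithmetic of the orbits $\Gamma z_0$ and $\Gamma w$, which varies from group to group. This is precisely the gap that keeps Conjecture~\ref{conj:finite-coreset-iff-group-irreducible} open in general, and why only the two structured families are settled in Section~\ref{sec:infinite-coresets}.
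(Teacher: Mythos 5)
You correctly identify the logical status of the statement: it is a conjecture, the ``if'' direction is precisely Corollary~\ref{cor:FiniteCoreSets}, and the converse is settled in the paper only for imprimitive groups and for groups with an irrational invariant subspace, with the remaining case open. Your overall plan tracks Section~\ref{sec:infinite-coresets}, and your closing paragraph on why uniformity fails is an accurate description of the actual gap. However, two steps of your sketch would fail as written.

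First, your uniform template $z_m = z_0 + m\,w$ with $0 \neq w \in V_1 \cap \zz^n$ is impossible in the irrational case: by definition an irrational invariant subspace satisfies $V \cap \qq^n = \{0\}$, so it contains no nonzero integral (or even rational) direction, and the infinite family there is \emph{not} a translation family. The paper instead uses Kronecker's theorem (Theorem~\ref{thm:kronecker}) to produce infinitely many integral points in a fixed layer $\zzk{k}$ whose projections onto $V$ are arbitrarily small yet nonzero (Lemmas~\ref{lem:projection-smaller-eps} and~\ref{lem:projection-positive}), each chosen with locally minimal projection, and concludes via Corollary~\ref{cor:fullorbit-corepoint} together with the stabilizer identity $\stab_\Gamma(z) = \stab_\Gamma(\proj{z}{V})$ of Lemma~\ref{lem:QI-fullorbit}. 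Second, Theorem~\ref{thm:core-points-close-to-invariant-subspace} cannot carry the lattice-freeness argument you assign to it: it is a \emph{necessary} condition on core points, used only to motivate where to search, and it says nothing about which integral points lie inside a given orbit polytope. The actual engine in both settled cases is Proposition~\ref{prop:fullorbit-infinite-coresets}: local minimality of $\norm{\proj{z}{V}}$ among integral points of the layer, combined with Jensen's inequality and strict convexity of the squared norm, reduces being a core point for $\Gamma$ to being a core point for the typically much smaller group $\stab_\Gamma(\proj{z}{V})$. In the imprimitive case one needs the stronger \emph{global} minimality plus the compatibility condition $\stab_\Gamma(\proj{z}{V}) \leq \stab_\Gamma(w)$ for the block-constant direction $w \in W_\Omega$ (Corollary~\ref{cor:direction-corepoint}, with both hypotheses verified in Lemma~\ref{lemma:e1-minimal}); your phrase ``$w$ constant on the blocks'' captures the right choice, but without the minimality and stabilizer conditions the translation argument alone does not rule out integral points entering the deforming polytope.
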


For the aforementioned core point constructions we use the fact that all core points are close to an invariant subspace of the group by Theorem~\ref{thm:core-points-close-to-invariant-subspace}.
In Section~\ref{sec:infinite-construction-idea} we will look at an outline of a general core point construction based on proximity to invariant subspaces.
We use this method to give constructions for all imprimitive groups (in Section~\ref{sec:infinite-rational-subspace}) and for all groups with a non-rationally generated invariant subspace (in Section~\ref{sec:infinite-irrational-subspace}).
For the remaining groups the construction can not be applied directly in general.
However, we computationally verified Conjecture~\ref{conj:finite-coreset-iff-group-irreducible} for all transitive groups up to degree 127.
Details about these special constructions can be found in \cite{KatrinPhD, ThomasPhD}.

\subsection{Constructing core points along invariant subspaces}\label{sec:infinite-construction-idea}

Our main tool in this section is orthogonal projection to an arbitrary invariant subspace of a transitive group.
If this projection of an integer point $z$ has small norm, i.e., the point $z$ is close to an invariant subspace, then $z$ seems to be a good candidate for a core point.
Recall from Section~\ref{sec:permutation-group-basics} that we can always decompose $\rr^n$ into a direct sum of pairwise orthogonal invariant subspaces $\rr^n = \lin \vones \oplus \bigoplus_i V_i$.
If such an invariant subspace $V_i$ contains no rational vectors, i.e., $V_i \cap \qq^n = \{0\}$, we call $V_i$ an \defterm{irrational invariant subspace}.
Similarly, we say that $V_i$ is \defterm{rational} if it has a rational basis.
Every $\rr$-irreducible invariant subspace is either rational or irrational.
Reducible subspaces may be neither rational nor irrational by this definition, but for our purposes it is enough to cover irreducible subspaces.
For some groups, for instance, cyclic groups of prime order, all irreducible invariant subspaces except the fixed space are irrational.
A more detailed study of these groups can be found in \cite{MR2150939}.

Our goal throughout this section is the construction of core points.
Therefore we need a way to prove that an orbit polytope $\conv \ac{z}{\Gamma}$ is lattice-free.
The main tool that we use is projection onto an invariant subspace of $\Gamma$.
If both the projection and the fibers are lattice-free in some sense, then we can prove lattice-freeness for the whole orbit polytope.
Proposition~\ref{prop:fullorbit-infinite-coresets} will give a sufficient core point condition in quite general (and also quite technical) terms.
The rest of this section makes the projection argument more precise.

An important property of the projection to an invariant subspace is that group action and projection operation commute:
\begin{lemma}\label{lem:projection-invariant-subspace-commute-group-action}
 Let $\Gamma\leq\Symmet{n}$ be a permutation group and $V$ an invariant subspace of $\Gamma$.
 Group action and projection commute: $\proj{(\gamma x)}{V} = \gamma (\proj{x}{V})$ for all $\gamma \in \Gamma$ and $x\in\rr^n$.
\end{lemma}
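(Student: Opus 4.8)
The plan is to prove the commutation identity $\proj{(\gamma x)}{V} = \gamma(\proj{x}{V})$ directly from the defining properties of orthogonal projection onto an invariant subspace, exploiting the fact that the permutation action of $\Gamma$ is by orthogonal transformations. First I would recall the key structural facts: every $\gamma \in \Gamma \leq \Symmet{n}$ acts on $\rr^n$ as a permutation matrix, hence as an orthogonal map preserving $\scp{\cdot}{\cdot}$; and $V$ being invariant means $\gamma V = V$ and, because $\gamma$ is orthogonal, also $\gamma V^{\perp} = V^{\perp}$.

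The main step is to verify that the map $x \mapsto \gamma(\proj{x}{V})$ satisfies the two characterizing properties of the orthogonal projection $\proj{(\gamma x)}{V}$ of $\gamma x$ onto $V$, namely that it lands in $V$ and that the residual $\gamma x - \gamma(\proj{x}{V})$ is orthogonal to $V$. For the first property, since $\proj{x}{V} \in V$ and $\gamma V = V$, we immediately get $\gamma(\proj{x}{V}) \in V$. For the second, write $x = \proj{x}{V} + w$ with $w \in V^{\perp}$, so that $\gamma x - \gamma(\proj{x}{V}) = \gamma w$; because $\gamma$ maps $V^{\perp}$ into $V^{\perp}$, we have $\gamma w \in V^{\perp}$, which is exactly the orthogonality condition. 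By uniqueness of the orthogonal decomposition $\rr^n = V \oplus V^{\perp}$, these two properties force $\gamma(\proj{x}{V}) = \proj{(\gamma x)}{V}$, and this holds for all $\gamma \in \Gamma$ and $x \in \rr^n$ as claimed.

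I do not expect any serious obstacle here; the statement is essentially the general fact that an orthogonal map commutes with projection onto any subspace it preserves, specialized to the permutation representation. The only point requiring a line of justification is that $\gamma$ preserving $V$ also preserves $V^{\perp}$, which is where orthogonality of permutation matrices (rather than mere invertibility) is genuinely used; one could alternatively phrase the whole argument in terms of the invariant inner product and the uniqueness of the orthogonal decomposition. Since the identity is needed later to transport lattice-freeness of projected orbit polytopes back to the full orbit polytope (via Lemma~\ref{lem:stabilizer-polytope} and the projection arguments outlined for Proposition~\ref{prop:fullorbit-infinite-coresets}), I would keep the proof short and emphasize the uniqueness-of-decomposition formulation so that it applies verbatim to each invariant subspace $V_i$ appearing in the decomposition $\rr^n = \lin\vones \oplus \bigoplus_i V_i$.
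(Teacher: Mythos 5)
Your proof is correct and follows essentially the same route as the paper's: decompose $x = \proj{x}{V} + w$ with $w \in V^{\perp}$, apply $\gamma$, use invariance of both $V$ and $V^{\perp}$, and conclude by uniqueness of the orthogonal decomposition $\rr^n = V \oplus V^{\perp}$. If anything, you are slightly more careful than the paper, which silently treats $W = V^{\perp}$ as invariant, whereas you explicitly justify this via the orthogonality of the permutation action.
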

\begin{proof}
 Let $W \defeq V^\perp$ be the orthogonal complement of $V$.
 We can decompose $x = v \oplus w$ with $v\in V$ and $w\in W$ into a direct sum from distinct invariant subspaces $V, W$.
 Since the action of $\Gamma$ is linear, we have $\gamma x = \gamma v + \gamma w$ for every permutation $\gamma \in \Gamma$.
 Because $V$ and $W$ are invariant subspaces, we must have $\gamma v \in V$ and $\gamma w \in W$.
 Hence, this is a direct sum $\gamma x = \gamma v \oplus \gamma w$.
 Thus, $\proj{(\gamma x)}{V} = \gamma v = \gamma (\proj{x}{V})$.
\end{proof}

We now turn to a method for proving lattice-freeness of orbit polytopes.
Let $\Gamma\leq\Symmet{n}$ be a permutation group and $V$ be an invariant subspace of~$\Gamma$.
Furthermore, let $z\in\zzk{k}$ be an integral point in the $k$-th layer.
Since all integer points in the orbit polytope $P \defeq \conv \Gamma z$ also lie in the $k$-th layer, we start with the following projection setup.
We project both the orbit polytope~$P$ and all integer points $\zzk{k}$ orthogonally onto~$V$.
To ensure the lattice-freeness of~$P$ we have to control the pre-image of all points in the intersection $Q\defeq \proj{P}{V} \cap \proj{\zzk{k}}{V}$.
If the pre-image of~$Q$ intersects~$P$ only at its vertices $\vertices(P)$, then $P$ is lattice-free.
This condition is in general quite hard to test because it is an integer feasibility problem.
Thus, we use relaxed conditions instead.
The following two steps together allow us to control the pre-images of $Q$ in some cases.
First, we ensure that all integer points in $P$ project only onto $\proj{\vertices(P)}{V}$.
Second, we ensure that only vertices of $P$ project onto $\proj{\vertices(P)}{V}$.
These two steps together constitute Proposition~\ref{prop:fullorbit-infinite-coresets}.
Before we get there, we start with an outline that introduces facts and notation.

For the first step we use arguments based on the Euclidean norm.
We say that $z$ has \defterm{globally minimal} projection onto~$V$ if
\begin{equation}\label{eq:projection-globally-minimal}
\norm{\proj{z}{V}} \leq \norm{\proj{z'}{V}}\quad\text{for all $z' \in \zzk{k}$,}
\end{equation}
If $z$ has globally minimal projection, then integer points in~$P$ can project only onto $\proj{\vertices(P)}{V}$, which completes the first step.
The argument behind this will be made explicit in Proposition~\ref{prop:fullorbit-infinite-coresets} below.
However, we will see later that for irrational subspaces there is no point with global minimal projection (cf.~Lemmas~\ref{lem:projection-positive}~and~\ref{lem:projection-smaller-eps}).
In this case the following weaker condition suffices.
We say that the point $z$ has \defterm{locally minimal} projection onto~$V$ if
\begin{equation}\label{eq:projection-locally-minimal}
\norm{\proj{z}{V}} \leq \norm{\proj{z'}{V}}\quad\text{for all $z' \in \zzk{k}$ with $\norm{z'} \leq \norm{z}$.}
\end{equation}
Since only points with $\norm{z'} \leq \norm{z}$ can lie in the orbit polytope $P = \conv \Gamma z$, it is enough to control the projection of these points.

For the second step of the outline above, proving lattice-freeness of $P$, our argument is based on the stabilizer group of the vertex $z$ and its projection $\proj{z}{V}$.
We will need the following lemma.
\begin{lemma}\label{lem:projection-stabilizer-supergroup}
 Let $\Gamma\leq\Symmet{n}$ and $V$ an invariant subspace of~$\Gamma$.
 For any $z \in \zz^n$ we have $\stab_\Gamma(z) \leq \stab_\Gamma(\proj{z}{V})$.
\end{lemma}
\begin{proof}
 Let $\gamma \in \stab_\Gamma(z)$, thus $\gamma z = z$.
 This implies $\gamma (\proj{z}{V}+\proj{z}{W}) = \proj{z}{V}+\proj{z}{W}$.
 Hence $\gamma \proj{z}{V} - \proj{z}{V} = \proj{z}{W} - \gamma \proj{z}{W}$.
 The only element in $V\cap W$ is the zero vector.
 Therefore $\gamma \in\stab_\Gamma(\proj{z}{V})$.
\end{proof}

\begin{proposition}\label{prop:fullorbit-infinite-coresets}
 Let $\Gamma\leq\Symmet{n}$ be a permutation group and $V$ an invariant subspace of~$\Gamma$.
 Let $z\in\zz^n$ have locally minimal projection for $V$.
 Then $z$ is a core point for~$\Gamma$ if and only if $z$ is a core point for $\stab_\Gamma(\proj{z}{V})$.
\end{proposition}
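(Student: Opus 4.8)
The plan is to prove both directions of the biconditional by relating lattice points in the orbit polytope $P \defeq \conv \Gamma z$ to lattice points in the smaller orbit polytope $P' \defeq \conv \stab_\Gamma(\proj{z}{V}) z$. Write $H \defeq \stab_\Gamma(\proj{z}{V})$ for brevity. First I would observe that $P' \subseteq P$, so if $P$ is lattice-free then $P'$ is lattice-free, giving the easy direction: a core point for $\Gamma$ is automatically a core point for $H$. (One must also check $z$ is genuinely a vertex of $P'$, but since $z$ is already extreme in the larger polytope $P$ it remains extreme in $P' \subseteq P$.) The substantive content is the reverse direction, where we assume $z$ is a core point for $H$ and must conclude that $P$ itself is lattice-free.

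\textbf{The projection argument for the hard direction.} For the reverse direction I would argue by contraposition: suppose $P$ contains an integer point $y \in \zzk{k}$ that is not a vertex of $P$, and produce a non-vertex integer point in $P'$. The key is the \emph{first step} from the outline preceding the proposition: since $z$ has locally minimal projection onto $V$ and every $y \in P$ satisfies $\norm{y} \leq \norm{z}$ (the orbit polytope is inscribed in a sphere about the barycenter), every integer point $y \in P \cap \zzk{k}$ has $\norm{\proj{y}{V}} \geq \norm{\proj{z}{V}}$, while convexity of the projection forces $\proj{P}{V}$ to lie in the ball of radius $\norm{\proj{z}{V}}$. Combining these, $\proj{y}{V}$ must attain the maximal norm in $\proj{P}{V}$, hence $\proj{y}{V}$ lies among the projections of vertices $\proj{\vertices(P)}{V}$; in fact it must equal $\proj{\gamma z}{V}$ for some $\gamma \in \Gamma$. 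After translating by a group element (using Lemma~\ref{lem:projection-invariant-subspace-commute-group-action}, so that the action commutes with projection) I may assume $\proj{y}{V} = \proj{z}{V}$ itself.

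\textbf{Descending to the stabilizer.} Once $\proj{y}{V} = \proj{z}{V}$, the \emph{second step} kicks in via Lemma~\ref{lem:projection-stabilizer-supergroup}: I would show that $y$ in fact lies in the smaller polytope $P' = \conv(Hz)$. Concretely, $y$ is a convex combination of orbit vertices $\gamma_j z$, and every such vertex appearing with positive weight must satisfy $\proj{\gamma_j z}{V} = \proj{z}{V}$ (otherwise the averaged projection could not equal $\proj{z}{V}$, because all contributing projections lie on the boundary sphere and a proper average of distinct boundary points of a ball lies strictly inside); this uses strict convexity of the Euclidean ball. By Lemma~\ref{lem:projection-invariant-subspace-commute-group-action}, $\proj{\gamma_j z}{V} = \gamma_j \proj{z}{V}$, so each such $\gamma_j$ fixes $\proj{z}{V}$ and hence $\gamma_j \in H$. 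Therefore $y \in \conv(Hz) = P'$, and $y$ is a non-vertex integer point of $P'$, contradicting that $z$ is a core point for $H$. This yields the reverse direction and completes the biconditional.

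\textbf{Main obstacle.} I expect the delicate step to be the argument that $\proj{y}{V}$ must land exactly on the projected vertices and that every contributing vertex shares the same projection. This relies essentially on the strict convexity of the Euclidean ball together with the locally minimal projection hypothesis, and care is needed to confirm that the relevant points all have norm at most $\norm{z}$ so that local minimality applies; getting this norm bookkeeping exactly right, rather than the group-theoretic descent via Lemma~\ref{lem:projection-stabilizer-supergroup}, is where the proof has to be handled carefully.
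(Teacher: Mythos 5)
Your proposal is correct and follows essentially the same route as the paper: the easy direction by subgroup inheritance of core points, and the hard direction by observing that an integral $y\in\conv(\Gamma z)$ satisfies $\norm{y}\leq\norm{z}$, so local minimality forces $\norm{\proj{y}{V}}=\norm{\proj{z}{V}}$, and strict convexity (the paper phrases this via Jensen's inequality for $\norm{\cdot}^2$, you via boundary points of the ball in $V$) forces all contributing vertices $\gamma_j z$ into a single coset $\gamma_0\stab_\Gamma(\proj{z}{V})$, reducing to the stabilizer orbit polytope. Your contrapositive framing versus the paper's direct argument is a purely cosmetic difference, and your norm bookkeeping (that $y\in\zzk{k}$ with $\norm{y}\leq\norm{z}$, so local minimality applies) matches the paper's.
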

\begin{proof}
 Because $\stab_\Gamma(\proj{z}{V})$ is a subgroup of $\Gamma$, we only have to prove the ``if''-part.
 For this let $y$ be an integer point in $\conv\ac{z}{\Gamma}$.
 We can write $y$ as a convex combination 
\begin{equation}\label{eq:fullorbit-convex-combination}
y = \sum_{\gamma \in \Gamma} \lambda_\gamma \ac{z}{\gamma }
\end{equation}
with $0 \leq \lambda_\gamma \leq 1$ and $\sum_{\gamma \in \Gamma} \lambda_\gamma = 1$.
This yields:
 \begin{equation}\label{eq:jensen}
\begin{split}
\|\proj{z}{V}\|^2 \leq \|\proj{y}{V}\|^2 &= \|\proj{\left ( \sum_{\gamma \in \Gamma} \lambda_\gamma \ac{z}{\gamma } \right)}{V}\|^2 \\
&\leq \sum_{\gamma \in \Gamma} \lambda_\gamma \|\proj{\left(\ac{z}{\gamma }\right)}{V}\|^2
=\|\proj{z}{V}\|^2.
\end{split}
\end{equation}
The first inequality holds because we assumed that $z$ has locally minimal projection.
The second inequality holds because of the convexity of a norm square and Jensen's inequality.
The last equation holds since $\norm{\proj{(\gamma z)}{V}} = \norm{\gamma (\proj{z}{V})} = \norm{\proj{z}{V}}$.
For this we use Lemma~\ref{lem:projection-invariant-subspace-commute-group-action} and that the linear representation of~$\gamma$ is an orthogonal matrix.
Note that the left- and right-most terms of~\eqref{eq:jensen} are the same, so we must in fact have equality.

 Since the squared norm is strictly convex on $V$, equality in \eqref{eq:jensen} holds if and only if there is a coset $\gamma_0 \stab_\Gamma(\proj{z}{V})$ such that $\sum_{\gamma \in \gamma_0\stab_\Gamma(\proj{z}{V})} \lambda_\gamma = 1$.
Plugging this into~\eqref{eq:fullorbit-convex-combination} yields
 $$\inv{\gamma_0}y = \sum_{\gamma \in \stab_\Gamma(\proj{z}{V})} \lambda_\gamma \gamma z.$$
 Since $z$ is a core point for $\stab_\Gamma(\proj{z}{V})$, we must have $\inv{\gamma_0}{y} \in \stab_\Gamma(\proj{z}{V})z$.
 Hence, the point $y$ lies also in the orbit $\Gamma z$.
 From this we conclude that $z$ is a core point for $\Gamma$.
\end{proof}

\subsection{Imprimitive groups}\label{sec:infinite-rational-subspace}

We start this section with a specialization of Proposition~\ref{prop:fullorbit-infinite-coresets} 
and then prove that imprimitive groups have an infinite number of core points (up to translation).
\begin{corollary}\label{cor:direction-corepoint}
 Let $\Gamma\leq\Symmet{n}$ be a permutation group and $\rr^n = \fix (\Gamma) \oplus V \oplus W$ a decomposition into $\Gamma$-invariant subspaces.
 Let $z \in \zzk{k}$ be a core point for~$\stab_\Gamma(\proj{z}{V})$ with globally minimal projection.
 Let $w \in W \cap \zz^n$ be such that $\stab_\Gamma(\proj{z}{V}) \leq \stab_\Gamma(w)$.
 Then for all $m\in \zz$ the polytope $P_m \defeq \conv \ac{(z + m w)}{\Gamma}$ contains no integer points except its vertices.
\end{corollary}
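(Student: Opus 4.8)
The plan is to apply Proposition~\ref{prop:fullorbit-infinite-coresets} to the translated point $z_m \defeq z + mw$ together with the same invariant subspace~$V$. Throughout I use that the decomposition $\rr^n = \fix(\Gamma) \oplus V \oplus W$ is orthogonal, which we may assume since the permutation representation of~$\Gamma$ is orthogonal; this makes the summands pairwise orthogonal.

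First I would record how $z_m$ behaves under projection onto~$V$. As $w \in W$ and $W \perp V$, we have $\proj{w}{V} = 0$, so $\proj{z_m}{V} = \proj{z}{V} + m\proj{w}{V} = \proj{z}{V}$. In particular $\stab_\Gamma(\proj{z_m}{V}) = \stab_\Gamma(\proj{z}{V}) \eqdef H$. Moreover, since $\vones \in \fix(\Gamma) \perp W$, we have $\scp{w}{\vones} = 0$, hence $\scp{z_m}{\vones} = k$ and $z_m \in \zzk{k}$; note also that $z_m \in \zz^n$ because $z$ and $w$ are integral and $m \in \zz$. To invoke Proposition~\ref{prop:fullorbit-infinite-coresets} I must check its hypothesis that $z_m$ has locally minimal projection onto~$V$. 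This is immediate: $z_m$ lies in the same layer $\zzk{k}$ as~$z$, and its projection onto~$V$ equals $\proj{z}{V}$, which by assumption has globally minimal norm among all points of $\zzk{k}$. Thus $z_m$ has globally, hence locally, minimal projection.

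The key step is to show that $z_m$ is a core point for~$H$. Because $H = \stab_\Gamma(\proj{z}{V}) \leq \stab_\Gamma(w)$ by hypothesis, every $\gamma \in H$ fixes~$w$, so $\gamma z_m = \gamma z + m\gamma w = \gamma z + mw$. Consequently the orbit polytope of $z_m$ under~$H$ is the integer translate
\[
\conv(H z_m) = \conv(H z) + mw.
\]
Since $mw \in \zz^n$, translation by~$mw$ is a bijection of $\zz^n$ onto itself, so $\conv(Hz) + mw$ is lattice-free exactly when $\conv(Hz)$ is. As $z$ is a core point for~$H$ by hypothesis, the latter holds, and therefore $z_m$ is a core point for~$H$.

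Finally I would apply Proposition~\ref{prop:fullorbit-infinite-coresets} to $z_m$ and~$V$: its hypothesis was verified above, and $\stab_\Gamma(\proj{z_m}{V}) = H$, for which $z_m$ is a core point. The proposition then yields that $z_m = z + mw$ is a core point for~$\Gamma$, i.e.\ that $P_m = \conv(\Gamma(z+mw))$ is lattice-free and hence contains no integer points other than its vertices, as claimed. The only genuine content is the observation that $H$ fixes~$w$, which turns the $H$-orbit of $z_m$ into an integer translate of the $H$-orbit of~$z$; everything else is bookkeeping with the orthogonal decomposition and the translation-invariance of lattice-freeness. I do not expect a serious obstacle, only care in keeping straight that the minimality of the projection is inherited because $z_m$ stays in the layer $\zzk{k}$ and shares the projection $\proj{z}{V}$.
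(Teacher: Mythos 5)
Your proof is correct and takes essentially the same route as the paper's: both verify the (locally) minimal projection hypothesis for $z+mw$ via $\proj{(z+mw)}{V}=\proj{z}{V}$, use that $\stab_\Gamma(\proj{z}{V})\leq\stab_\Gamma(w)$ fixes $w$ so that $\conv\bigl(\stab_\Gamma(\proj{z}{V})\,(z+mw)\bigr)=mw+\conv\bigl(\stab_\Gamma(\proj{z}{V})\,z\bigr)$ is an integral translate of a lattice-free polytope, and then conclude with Proposition~\ref{prop:fullorbit-infinite-coresets}. Your extra bookkeeping (that $z+mw$ remains in the layer $\zzk{k}$ and that the relevant stabilizer is unchanged) only makes explicit what the paper leaves implicit.
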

\begin{proof}
 To prove that $P_m$ is lattice-free we apply Proposition~\ref{prop:fullorbit-infinite-coresets}.
 Since $z$ has globally minimal projection onto $V$, so does $z + m w$.
 In particular, $z + m w$ thus also has locally minimal projection.
 It remains to show that $z+mw$ is a core point for $\stab_\Gamma(\proj{z}{V})$.
 Because of the inclusion $\stab_\Gamma(\proj{z}{V}) \leq \stab_\Gamma(w)$, we have that
 $$P'_m\defeq\conv \left( \ac{(z+mw)}{\stab_\Gamma(\proj{z}{V})} \right) = mw + \conv  \left( \ac{z}{\stab_\Gamma(\proj{z}{V})} \right).$$
 Because $z$ is a core point for $\stab_\Gamma(\proj{z}{V})$ by assumption of the corollary, 
this shows that the polytope $P'_m$ is lattice-free.
 Hence, $z+mw$ is a core point for $\stab_\Gamma(\proj{z}{V})$ and thus also for $\Gamma$ by Proposition~\ref{prop:fullorbit-infinite-coresets}.
\end{proof}
\begin{example}\label{ex:infinite-core-points-c4}
 As an example we consider the cyclic group $\Cycl{4} = \langle (1\,2\,3\,4)\rangle$.
 The arguments here will be generalized to imprimitive groups later in this section.
 The vector $w \defeq (1,-1,1,-1)^\tp$ spans a one-dimensional invariant subspace.
 Its orthogonal complement, besides the fixed space, is spanned by $v\defeq (1,0,-1,0)^\tp$ and $v'\defeq (0,1,0,-1)^\tp$.

 For applying Corollary~\ref{cor:direction-corepoint}, let $V \defeq \lin \{v,v'\}$ and $W \defeq \lin \{w\}$.
 We will see in Lemma~\ref{lemma:e1-minimal} that $e_1 \defeq (1,0,0,0)^\tp$ is a core point for $\Cycl{4}$ with globally minimal projection on~$V$.
 We compute $\proj{e_1}{V} = \frac{1}{2}v$, hence the stabilizer $\stab_{\Cycl{4}}(\proj{e_1}{V})$ is trivial.
 Therefore we may choose any integer direction from~$W$.
 As these are all multiples of~$w$, this yields the sequence of core points, $e_1 + m w = (1+m, -m, m, -m)^\tp$.

 If we swap the roles of $V$ and $W$ in Corollary~\ref{cor:direction-corepoint}, we still have that $e_1$ has globally minimal projection on~$W$ (again cf.~Lemma~\ref{lemma:e1-minimal}).
 Its projection is $\proj{e_1}{W} = \frac{1}{4} w$ with stabilizer $\stab_{\Cycl{4}}(\proj{e_1}{W}) = \langle (1\,3)(2\,4)\rangle$.
 Since all non-zero elements from $V$ have trivial stabilizer, we cannot find a suitable integer direction $v \in V$ that is compatible with the stabilizer condition of Corollary~\ref{cor:direction-corepoint}.

 However, we may also work with Proposition~\ref{prop:fullorbit-infinite-coresets} directly.
 Let $a v + b v'$ with $a,b\in \zz$ be an arbitrary integer direction in $V$.
 By Proposition~\ref{prop:fullorbit-infinite-coresets}, 
$$p(a,b) \defeq e_1 + a v + b v' =  (1+a,\, b,\, -a,\, -b)^\tp $$ 
is a core point for $\Cycl{4}$ if and only if it is a core point for $\stab_{\Cycl{4}}(\proj{e_1}{W})= \langle (1\,3)(2\,4)\rangle$.
 The orbit polytope $\conv\stab_{\Cycl{4}}(\proj{e_1}{W}) p(a,b)$ has only two vertices,
\begin{align*}
 u &\defeq (1+a,\, b,\, -a,\, -b)^\tp,\quad\text{and}\\
 u' &\defeq (-a,\, -b,\, 1+a,\, b)^\tp.
\end{align*}
 Consider a proper convex combination $\lambda u + (1-\lambda) u'$ on the line segment between $u$ and $u'$ with $0 < \lambda < 1$.
 If $\lambda u + (1-\lambda) u'$ is integral, then 
 looking at the first coordinate shows that $\lambda (1+a) +(1-\lambda) (-a) = (2a+1)\lambda$ must be an integer.
 Looking at the second coordinate, we similarly obtain that $2b\lambda$ must be an integer.
 If $\gcd(2a+1,2b) > 1$ this is possible for $\lambda=1 / \gcd(2a+1,2b)$.
 If $b = 0$, the second condition is automatically fulfilled and the first condition is satisfiable if $a \notin \{-1,0\}$.
 We have therefore proven: $p(a,b)$ is a core point for $\Cycl{4}$ if and only if $\gcd(2a+1,2b) = 1$ (with our convention $\gcd(x,0) = \card{x}$).
 Figure~\ref{fig:orbit-polytope-C4-2} depicts instances of lattice-free orbit tetrahedra of $p(a,b)$ for $(a,b) \in \{(0,0),(0,1),(0,2),(0,3),(0,4)\}$.
 $\blacksquare$
\end{example}
\begin{figure}[htb]
\begin{center}
 \includegraphics[scale=0.16]{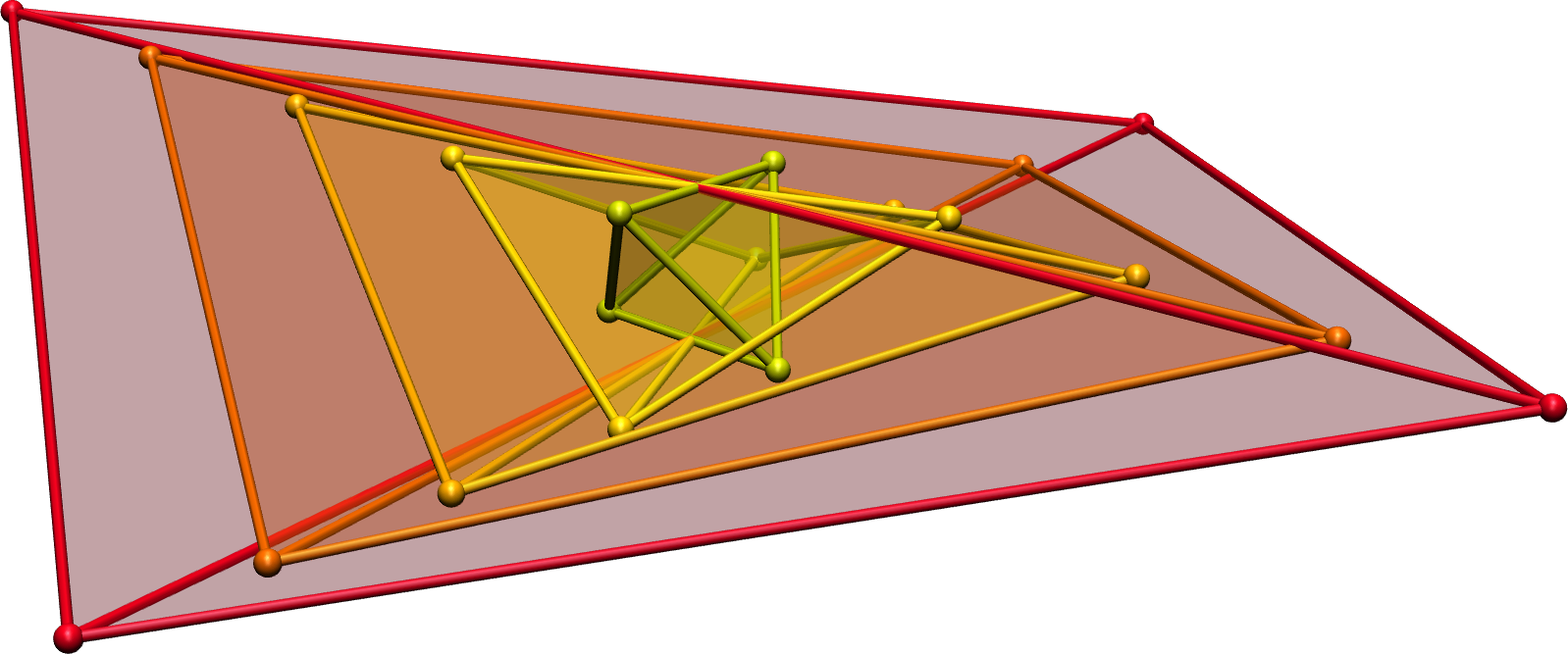}
\end{center}
\caption{Impression of an infinite sequence of lattice-free orbit polytopes for $\Cycl{4}$}\label{fig:orbit-polytope-C4-2}
\end{figure}
We will show that the conditions of Corollary~\ref{cor:direction-corepoint} are satisfied for imprimitive groups.
However, it can also be applied to other groups with rational subspaces if a suitable direction is found, which may however be difficult.

Recall from Section~\ref{sec:permutation-group-basics} the definition of an imprimitive permutation group.
For each imprimitive permutation group $\Gamma$ acting on $\setOneTo{n}$, there is a partition of $\setOneTo{n} = \bigsqcup_{i=1}^{B} \Omega_i$ such that $\Gamma$ acts on the $B$ sets $\Omega_i$.
For every $\gamma \in \Gamma$ and index $i\in\setOneTo{B}$ there exists an index $j$ such that $\gamma \Omega_i = \Omega_j$.
Every such block $\Omega_i$ has size $S = \frac{n}{B}$.
These blocks induce a rational invariant subspace of $\Gamma$ in the following way.
Let
\begin{equation}\label{eq:imprimitive-invariant-subspace-basis}
 \cin{u}{j} \defeq \sum_{i \in \Omega_j} e_i \in \zz^n
\end{equation}
be the characteristic vector of $\Omega_j$.
Then the vectors $\cin{u}{1}, \dots, \cin{u}{B}$ form an orthogonal basis of an $\Gamma$-invariant subspace of $\rr^n$.
We call this $B$-dimensional subspace $U_\Omega\defeq \lin \{\cin{u}{1}, \dots, \cin{u}{B}\}$.
Since $\vones = \sum_{j=1}^{B} \cin{u}{j}$, we know that $U$ contains $\fix(\Gamma) = \lin \vones$.
We can thus split $U$ into a direct sum $U_\Omega = \lin \vones \oplus W_\Omega$ for another rational invariant subspace $W_\Omega$.
Furthermore, there is an invariant subspace $V_\Omega$ which is the orthogonal complement of $U_\Omega$ in $\rr^n$.
In total we obtain for each block system~$\Omega$ the following decomposition into rational invariant subspaces:
\begin{equation}\label{eq:imprimitive-invariant-subspaces}
 \rr^n = \underbrace{\lin\vones\, \oplus\, W_\Omega}_{U_\Omega}\, \oplus\, V_\Omega.
\end{equation}

\begin{example}\label{ex:invariant-subspace-C6}
 As an example we consider the cyclic group $\Cycl{6} = \langle (1\,2\,3\,4\,5\,6)\rangle$.
 The group action of $\Cycl{6}$ is imprimitive as it preserves the partition $\Omega=\left\{\{1,3,5\},\{2,4,6\}\right\}$.
 The corresponding invariant subspace $U_\Omega$ is $\lin \{ (1,0,1,0,1,0)^\tp, (0,1,0,1,0,1)^\tp \}$.
 For its non-fixed summand we obtain $W_\Omega = \lin (1,-1,1,-1,1,-1)^\tp$.

 Note that the block system and the corresponding decomposition~\eqref{eq:imprimitive-invariant-subspaces} is not unique.
 For instance, the group $\Cycl{6}$ has another block system $\Omega' = \left\{\{1,4\},\{2,5\},\{3,6\}\right\}$.
 This corresponds to $U_{\Omega'} = \lin \{ (1,0,0,1,0,0)^\tp, (0,1,0,0,1,0)^\tp, (0,0,1,0,0,1)^\tp \}$ and $W_{\Omega'} = \lin \{ (2,-1,-1,2,-1,-1)^\tp, (-1,2,-1,-1,2,-1)^\tp \}$.
 $\blacksquare$
\end{example}

With these invariant subspaces $V_\Omega$ and $W_\Omega$ we show that the conditions of Corollary~\ref{cor:direction-corepoint} are fulfilled for imprimitive groups.

\begin{theorem}\label{thm:imprimitive-infinite-coreset}
 Let $\Gamma \leq \Symmet{n}$ act imprimitively, i.\,e. the permutation action of $\Gamma$ preserves a block system with blocks of size $1 < S < n$.
 If $k$ is not a multiple of $S$, then $\Gamma$ has infinitely many core points in layer $\zzk{k}$.
\end{theorem}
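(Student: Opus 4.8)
The plan is to apply Corollary~\ref{cor:direction-corepoint} directly, using the decomposition~\eqref{eq:imprimitive-invariant-subspaces} with $V \defeq V_\Omega$ (the ``within-block'' space) and $W \defeq W_\Omega$ (the ``between-block'' space). This mirrors the treatment of $\Cycl{4}$ in Example~\ref{ex:infinite-core-points-c4}, where the within-block space played the role of $V$ and the infinite family arose by translating a single vertex along a between-block direction $w$. Since $k$ is not a multiple of $S$, I would let $\rho \in \{1,\dots,S-1\}$ be the residue of $k$ modulo $S$ and write $k = \rho + tS$ with $t \in \zz$. Fixing one block $\Omega_1$ and a $\rho$-element subset $T \subset \Omega_1$, I would set
\[ z \defeq \sum_{i \in T} e_i + t\,\cin{u}{1}, \]
so that $z \in \zzk{k}$; concretely $z$ equals $t+1$ on $T$, equals $t$ on $\Omega_1 \setminus T$, and vanishes off $\Omega_1$.

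First I would check the three hypotheses of the corollary. Because adding a block-constant integer vector does not change the orthogonal projection onto $V_\Omega$, the projection $\proj{z}{V_\Omega}$ is supported exactly on $\Omega_1$, taking the two nonzero values $(S-\rho)/S$ on $T$ and $-\rho/S$ on $\Omega_1 \setminus T$. Consequently every $\gamma \in \stab_\Gamma(\proj{z}{V_\Omega})$ must fix $\Omega_1$ setwise and preserve $T$; since $z$ is constant on each of the three sets $T$, $\Omega_1\setminus T$ and $\setOneTo{n}\setminus\Omega_1$, such a $\gamma$ fixes $z$. Hence $z$ is fixed by $H \defeq \stab_\Gamma(\proj{z}{V_\Omega})$ and is therefore trivially a core point for $H$. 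For the translation direction I would take the between-block vector $w \defeq (B-1)\cin{u}{1} - \sum_{j=2}^{B}\cin{u}{j} \in W_\Omega \cap \zz^n$, which is nonzero and is fixed by $H$ (it equals $B-1$ on $\Omega_1$ and $-1$ on every other block, and $H$ fixes $\Omega_1$ while permuting the remaining blocks among themselves), so that $H \leq \stab_\Gamma(w)$.

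The crux is to show that $z$ has globally minimal projection onto $V_\Omega$ in the sense of~\eqref{eq:imprimitive-invariant-subspaces}, i.e. that $\norm{\proj{z}{V_\Omega}}^2 = \rho(S-\rho)/S$ is the minimum of $\norm{\proj{z'}{V_\Omega}}^2$ over all $z' \in \zzk{k}$. Writing $s_j$ for the block sums of $z'$, this norm square splits as $\sum_j \sum_{i\in\Omega_j}(z'_i - s_j/S)^2$, and a standard balancing argument shows each block contributes at least $g(r_j) \defeq r_j(S-r_j)/S$, where $r_j \defeq s_j \bmod S$. It then remains to minimise $\sum_j g(r_j)$ subject to $\sum_j r_j \equiv k \equiv \rho \pmod S$. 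Here I would establish the superadditivity inequality $g(a) + g(b) \geq g\big((a+b)\bmod S\big)$ for $a,b \in \{0,\dots,S-1\}$ by a two-case elementary computation (the difference equals $2ab/S$ when $a+b<S$ and $2(S-a)(S-b)/S$ otherwise, both nonnegative); iterating gives $\sum_j g(r_j) \geq g(\rho)$, which is exactly the value attained by $z$. I expect this minimisation lemma to be the only genuine obstacle, the remaining verifications being essentially formal.

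With all hypotheses in place, Corollary~\ref{cor:direction-corepoint} yields that $z + mw$ is a core point for $\Gamma$ for every $m \in \zz$. Since $w \in W_\Omega$ is orthogonal to $\vones$, each $z+mw$ lies in $\zzk{k}$, and as $w \neq 0$ the points $z+mw$ are pairwise distinct even modulo $\lin\vones$. This produces infinitely many core points in layer $\zzk{k}$, as claimed. Finally, I would note that the argument breaks down precisely when $S \mid k$: in that case $\rho = 0$, the projection $\proj{z}{V_\Omega}$ vanishes, and $H$ becomes all of $\Gamma$, so the construction degenerates — which is consistent with the hypothesis that $k$ be a non-multiple of $S$.
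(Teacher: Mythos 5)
Your proof is correct, and it follows the paper's overall strategy: Corollary~\ref{cor:direction-corepoint} applied with the block decomposition~\eqref{eq:imprimitive-invariant-subspaces}, using the same direction $w$ (your $(B-1)\cin{u}{1} - \sum_{j=2}^{B}\cin{u}{j}$ is exactly the paper's scaled $\proj{\cin{u}{1}}{W_\Omega} = \cin{u}{1} - \frac{S}{n}\vones$). Where you genuinely diverge is in the instantiation of the key lemma (the paper's Lemma~\ref{lemma:e1-minimal}), in two ways. First, the base point: the paper takes the $0/1$-vector $\cin{z}{k} = \sum_{i=1}^{l}e_i + \sum_{j=2}^{\floor{\frac{k}{S}}+1}\cin{u}{j}$, chosen precisely so that it is a \emph{universal} core point (Example~\ref{ex:universal-core-points}), which is how the paper verifies that $z$ is a core point for $\stab_\Gamma(\proj{z}{V_\Omega})$. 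You instead concentrate the whole layer sum on a single block, $z = \sum_{i\in T}e_i + t\,\cin{u}{1}$, and verify that hypothesis trivially: the level sets of $\proj{z}{V_\Omega}$ (values $\tfrac{S-\rho}{S}$, $-\tfrac{\rho}{S}$, $0$, pairwise distinct since $0<\rho<S$) force every element of $H = \stab_\Gamma(\proj{z}{V_\Omega})$ to fix $z$, so the $H$-orbit polytope is a single point. This removes any reliance on the universal core point characterization. Second, the global minimality: after the identity $\norm{\proj{z'}{V_\Omega}}^2 = \sum_j\sum_{i\in\Omega_j}(z'_i - s_j/S)^2$ the paper disposes of minimality with an informal balancing observation, whereas you give a complete argument via the per-block lower bound $g(r_j) = r_j(S-r_j)/S$ and the superadditivity $g(a)+g(b)\geq g\bigl((a+b)\bmod S\bigr)$, whose two-case computation (difference $2ab/S$ when $a+b<S$, and $2(S-a)(S-b)/S$ otherwise) is correct and iterates to $\sum_j g(r_j) \geq g(\rho)$, the value your $z$ attains. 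So your write-up is actually more rigorous at the one point where the paper only sketches; both constructions attain the same minimum $\rho(S-\rho)/S$, and your stabilizer containment $H \leq \stab_\Gamma(\Omega_1) \leq \stab_\Gamma(w)$ matches the paper's closing argument.
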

\begin{proof}
The proof of this theorem follows immediately from applying Corollary~\ref{cor:direction-corepoint} to the following Lemma~\ref{lemma:e1-minimal}.
By the latter, we find a core point $\cin{z}{k}$ in the claimed layers with globally minimal projection onto $V_\Omega$.
Moreover, it produces a non-zero direction $w \in W_\Omega \cap \zz^n$ such that $\stab_\Gamma(\proj{z^{(k)}}{V_\Omega}) \leq \stab_\Gamma(w)$.
Therefore, for every $m\in \zz$, the point $\cin{z}{k} + mw$ is a core point by Corollary~\ref{cor:direction-corepoint}.
Since $w$ is not the zero vector, these core points are different for varying $m$.
\end{proof}

\begin{lemma}\label{lemma:e1-minimal}
 Let $\Gamma \leq \Symmet{n}$ act imprimitively, i.\,e. the permutation action of $\Gamma$ preserves a block system with blocks of size $1 < S < n$.
 For the corresponding invariant subspaces $U_\Omega, V_\Omega, W_\Omega$ from \eqref{eq:imprimitive-invariant-subspaces} the following holds:
 If $k$ is not a multiple of $S$, then
 there exist a core point $z^{(k)} \in \zzk{k}$ with globally minimal projection onto $V_\Omega$ and a non-zero direction $w \in W_\Omega \cap \zz^n$ such that $\stab_\Gamma(\proj{z^{(k)}}{V_\Omega}) \leq \stab_\Gamma(w)$.
\end{lemma}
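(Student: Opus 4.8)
The plan is to exhibit the required point explicitly as a universal core point and to read off the direction $w$ directly from the block structure. First I would reduce to the range $1\le k\le n-1$. Since $S$ divides $n=BS$ and $\vones\in U_\Omega$ is orthogonal to $V_\Omega$, translating by any multiple of $\vones$ leaves the residue $k\bmod S$, the projection onto $V_\Omega$, and the core point property unchanged, while shifting the layer index by a multiple of $n$ (Remark~\ref{rem:corepoints-up-to-translation}); as $k$ is not a multiple of $S$, its residue mod $n$ lies in $\{1,\dots,n-1\}$, so we may assume $1\le k\le n-1$. Writing $k=qS+r$ with $0<r<S$, I would let $\cin{z}{k}$ be the indicator vector of a set consisting of $q$ entire blocks together with $r$ elements of one further block, and relabel so that this partial block is $\Omega_1$. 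Being the indicator of a $k$-element subset, $\cin{z}{k}$ is a universal core point by Example~\ref{ex:universal-core-points}, hence a core point for~$\Gamma$ and for every subgroup of~$\Gamma$.

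Next I would verify global minimality of the projection. Using $\proj{\cdot}{V_\Omega}=\id-\proj{\cdot}{U_\Omega}$ together with the fact that $\proj{x}{U_\Omega}$ replaces each coordinate by its block mean $\overline{x}_j$, one obtains for every $z'\in\zzk{k}$
\[ \norm{\proj{z'}{V_\Omega}}^2 = \sum_{j=1}^B\sum_{i\in\Omega_j}\bigl(z'_i-\overline{z'}_j\bigr)^2 , \]
where $\overline{z'}_j$ is the mean of $z'$ over $\Omega_j$. For a single block whose integer entries sum to $\sigma_j$ this within-block sum of squares is at least $\tfrac1S s_j(S-s_j)$, with $s_j=\sigma_j\bmod S$, equality holding when the entries take only the two values $\lfloor\sigma_j/S\rfloor$ and $\lceil\sigma_j/S\rceil$. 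Because $\sum_j\sigma_j=k$, the remainders satisfy $\sum_j s_j\equiv r\pmod S$, so the whole quantity is bounded below by the combinatorial minimum of $\tfrac1S\sum_j s_j(S-s_j)$ over all $s_j\in\{0,\dots,S-1\}$ with $\sum_j s_j\equiv r\pmod S$. I expect this discrete optimization to be the main obstacle: via the concavity of $s\mapsto s(S-s)$ it should reduce to concentrating the whole remainder in a single block, giving the minimum value $\tfrac1S r(S-r)$. This is exactly the value attained by $\cin{z}{k}$, whose only non-constant block is $\Omega_1$, so $\cin{z}{k}$ has globally minimal projection onto~$V_\Omega$.

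Finally I would produce $w$ and check the stabilizer inclusion. On $\Omega_1$ the vector $\proj{\cin{z}{k}}{V_\Omega}$ takes the two values $\tfrac{S-r}{S}$ and $-\tfrac{r}{S}$, both nonzero since $0<r<S$, and it vanishes on every other block; hence its support is precisely~$\Omega_1$. A permutation fixing a vector fixes its support, and $\Gamma$ respects the block partition, so every $\gamma\in\stab_\Gamma(\proj{\cin{z}{k}}{V_\Omega})$ satisfies $\gamma\Omega_1=\Omega_1$, i.e. $\stab_\Gamma(\proj{\cin{z}{k}}{V_\Omega})\le\stab_\Gamma(\Omega_1)$. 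I would then set $w\defeq(B-1)\cin{u}{1}-\sum_{j=2}^B\cin{u}{j}$, which lies in $W_\Omega\cap\zz^n$ because its block coefficients sum to zero, and is nonzero since $B\ge 2$. Any $\gamma$ fixing $\Omega_1$ setwise fixes $\cin{u}{1}$ and, by the block structure, merely permutes $\cin{u}{2},\dots,\cin{u}{B}$ among themselves, hence fixes $w$. Therefore $\stab_\Gamma(\proj{\cin{z}{k}}{V_\Omega})\le\stab_\Gamma(\Omega_1)\le\stab_\Gamma(w)$, which establishes the remaining assertion of the lemma.
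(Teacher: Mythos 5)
Your proof is correct and takes essentially the same route as the paper: the same candidate point $\cin{z}{k}$ (the $0/1$ indicator of $\lfloor k/S\rfloor$ full blocks plus $r=k\bmod S$ elements of one further block, which is a universal core point), the same direction $w$ (your $(B-1)\cin{u}{1}-\sum_{j=2}^B\cin{u}{j}$ is exactly $B$ times the paper's $\proj{\cin{u}{1}}{W_\Omega}$), and the same support argument giving $\stab_\Gamma(\proj{\cin{z}{k}}{V_\Omega})\leq\stab_\Gamma(\Omega_1)\leq\stab_\Gamma(w)$. The one step you flag as uncertain --- that the discrete minimization of $\frac{1}{S}\sum_j s_j(S-s_j)$ subject to $\sum_j s_j\equiv r\pmod S$ is attained by concentrating the remainder in a single block --- is indeed true (merging two nonzero $s_i,s_j$ into $(s_i+s_j)\bmod S$ never increases the objective, which reduces everything to the single value $r$), and here you are actually more explicit than the paper, which merely observes the form of the minimizing configuration in~\eqref{eq:imprimitive-projection-V-minimal}.
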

\begin{proof}
 To keep notation as simple as possible, we assume w.l.o.g.\ that the first of the blocks $\Omega_1,\dots,\Omega_B$ of $\Gamma$ is $\Omega_1 = \{1,\dots,S\}$.
 Consider an arbitrary $z \in \zz^n$.
 We compute the squared norm of the projection onto~$V_\Omega$ as 
\begin{equation}\label{eq:imprimitive-projection-V-minimal}
\begin{split}
 \|\proj{z}{V_\Omega}\|^2 &= \|z\|^2 - \|\proj{z}{U_\Omega}\|^2 \\&= \left ( \sum_{b = 1}^{B} \sum_{j\in\Omega_b} z_j^2 \right ) - \left ( \frac{1}{S} \sum_{b=1}^{B} \left(\sum_{j\in\Omega_b} z_j\right)^2 \right ) \\
 &= \frac{1}{S} \sum_{b = 1}^{B} \ \sum_{\substack{i,j\in\Omega_b\\i < j}} \left(z_i-z_j\right)^2.
\end{split}
\end{equation}
Looking at this sum of squares, we observe that the total expression is minimized if inside each block $\Omega_b$ the coordinates differ in the least possible way and the total number of blocks with non-zero contribution is minimized.
Let $l\in\{0,\ldots, S-1\}$ be congruent to $k \bmod S$.
Then the point
\begin{equation}\label{eq:imprimitive-layer-minimum}
 \cin{z}{k} = \sum_{i=1}^{l} e_i + \sum_{j=2}^{\floor{\frac{k}{S}}+1} \cin{u}{j}.
\end{equation}
with $\cin{u}{j}$ as in~\eqref{eq:imprimitive-invariant-subspace-basis}
satisfies this condition and hence has globally minimal projection.
As a sum of squares, the projection in~\eqref{eq:imprimitive-projection-V-minimal} can be zero if and only if $k$ is a multiple of $S$.
Thus, $\cin{z}{k}$ has non-zero length if $k$ is not a multiple of $S$.
The choice for the minimum in~\eqref{eq:imprimitive-layer-minimum} is not the most obvious, but it has the advantage that it is a universal core point because it has coordinates with only zeros and ones.

Now that we have found a core point $\cin{z}{k}$ with globally minimal projection, it remains to find a suitable non-zero direction $w \in W_\Omega\cap\zz^n$.
For this we need the stabilizer $\stab_\Gamma(\proj{\cin{z}{k}}{V_\Omega})$ to be contained in $\stab_\Gamma(w)$.
To compute the projection~$\proj{\cin{z}{k}}{V_\Omega}$ we again use our explicit basis for~$U_\Omega$.
Looking again at~\eqref{eq:imprimitive-layer-minimum} we see that $\proj{\cin{z}{k}}{V_\Omega} = \proj{\cin{z}{k+S}}{V_\Omega}$ since the vectors differ only in summands from $U_\Omega$, which is the orthogonal complement of $V_\Omega$.
For the projection we may thus assume w.l.o.g.\ that $k < S$ and we compute
\begin{equation}\label{eq:zk-projection}
\proj{\cin{z}{k}}{V_\Omega} = \cin{z}{k} - \proj{\cin{z}{k}}{U_\Omega} = \left(\sum_{i=1}^k e_i \right)- \frac{k}{S} \cin{u}{1} = \sum_{i=1}^k \left(1 - \frac{k}{S}\right) e_i - \sum_{i=k+1}^S \frac{k}{S} e_i.
\end{equation}
For the direction $w$ we look at the projection of $\cin{u}{1}$ onto $W_\Omega$, which is $\proj{\cin{u}{1}}{W_\Omega} = \cin{u}{1} - \frac{S}{n}\vones$.
After scaling this gives a non-zero integer vector~$w$ with stabilizer $\stab_\Gamma(w) = \stab_\Gamma(\Omega_1)$.
Looking again at~\eqref{eq:zk-projection}, we observe that $\proj{\cin{z}{k}}{V_\Omega}$ has a zero at coordinate~$i$ if and only if~$i$ is not in~$\Omega_1$.
Thus, the stabilizer of $\proj{\cin{z}{k}}{V_\Omega}$ must be a subgroup of $\stab_\Gamma(\setOneTo{n} \setminus \Omega_1) = \stab_\Gamma(\Omega_1) = \stab_\Gamma(w)$.
\end{proof}

\begin{remark}\label{rem:imprimitive-layer-minima-not-unique}
Note that many points minimize~\eqref{eq:imprimitive-projection-V-minimal}.
As long as they are core points, they are valid alternative choices for $\cin{z}{k}$ in~\eqref{eq:imprimitive-layer-minimum}.
If used in the proof of Theorem~\ref{thm:imprimitive-infinite-coreset}, they may also lead to infinite sequence of core points.
\end{remark}

\begin{example}\label{ex:invariant-subspace-C6-continued}
We continue Example~\ref{ex:invariant-subspace-C6} and construct core points for the cyclic group~$\Cycl{6}$.
We begin with the block system $\Omega=\left\{\{1,3,5\},\{2,4,6\}\right\}$.
We thus have $B = 2$ and size $S = 3$.
Hence, we can expect infinitely many core points in the layers with indices $k= 1, 2, 4, 5$ because these are not multiples of $S$.
The layer minima $\cin{z}{k}$ from Lemma~\ref{lemma:e1-minimal} are given by
\begin{equation*}
 \begin{split}
  \cin{z}{1} &= (1,0,0,0,0,0)^\tp,\\
  \cin{z}{2} &= (1,0,1,0,0,0)^\tp,\\
  \cin{z}{4} &= (1,1,0,1,0,1)^\tp,\\
  \cin{z}{5} &= (1,1,1,1,0,1)^\tp.
 \end{split}
\end{equation*}
The corresponding direction is $w = (1,-1,1,-1,1,-1)^\tp$ from Example~\ref{ex:invariant-subspace-C6}.
Corollary~\ref{cor:direction-corepoint} implies that for every $m \in \zz$ the simplex $\conv \ac{(\cin{z}{k} + m w)}{\Cycl{6}}$ is lattice-free.
In the case $k = 1$, for every $m\in\zz$ the simplex given by the orbit of 
\begin{equation}\label{eq:cyclic6-example-corepoint}
\cin{z}{1} + m w = (1+m,-m,m,-m,m,-m)^\tp \in \zzk{1} 
\end{equation}
is lattice-free.

Note that for the layer with index $k=3$ this construction did not produce an infinite sequence of simplices.
But we can find such a sequence by looking at the other invariant block system of $\Cycl{6}$, which is $\Omega'=\left\{\{1,4\},\{2,5\},\{3,6\}\right\}$ with size $S = 2$.
Using this, we find infinitely many core points in the layers $1$, $3$ and $5$ by Theorem~\ref{thm:imprimitive-infinite-coreset}.
The corresponding layer minima are
\begin{equation*}
 \begin{split}
  \cin{z}{1} &= (1,0,0,0,0,0)^\tp,\\
  \cin{z}{3} &= (1,1,0,0,1,0)^\tp,\\
  \cin{z}{5} &= (1,1,1,0,1,1)^\tp.
 \end{split}
\end{equation*}
As direction $w$ we choose a multiple of $\proj{\cin{u'}{1}}{W_{\Omega'}} = \frac{1}{3}(2,-1,-1,2,-1,-1)^\tp$ such that the vector is integral.
In the case $k = 3$, for instance, the simplex given by the orbit of 
\begin{equation}\label{eq:corepoint-series-cyclic6-layer3}
 z^{(3)} + m \cin{w}{1} = (1+2m,\,1-m,\,-m,\,2m,\,1-m,\,-m)^\tp \in \zzk{3}
\end{equation}
is lattice-free for every $m\in\zz$.
An alternative choice for $\cin{z}{3}$ could be $(1,0,0,1,0,1)$ (cf.~Remark~\ref{rem:imprimitive-layer-minima-not-unique}), leading to the sequence of core points 
\begin{equation}\label{eq:corepoint-series-cyclic6-layer3-alt}
(1+2m,\,-m,\,-m,\,1+2m,\,-m,\,1-m)^\tp \in \zzk{3} 
\end{equation}
for $m\in\zz$. 
The core points described by~\eqref{eq:corepoint-series-cyclic6-layer3-alt} and~\eqref{eq:corepoint-series-cyclic6-layer3} are different.
To see this we observe that in \eqref{eq:corepoint-series-cyclic6-layer3-alt} two consecutive coordinates have the same value $-m$; this does happen in \eqref{eq:corepoint-series-cyclic6-layer3}.
Besides these constructions, there are entirely different ones that yield infinite sequences for~$\Cycl{6}$.

For instance, one can check that for every $a,b \in \zz$ the simplex given by the orbit of $(1,a,b,0,-a,-b)^\tp \in \zzk{1}$ is lattice-free.
We have already seen a proof of similar constructions for the special case~$\Cycl{4}$ in Example~\ref{ex:infinite-core-points-c4}.
More examples can be found in \cite{KatrinPhD, ThomasPhD}.
$\blacksquare$
\end{example}

\subsection{Irrational subspaces}\label{sec:infinite-irrational-subspace}

In this section we will construct core points using irrational invariant subspaces.
The main result will be the following.
\begin{theorem}\label{thm:irrational-infinite-coreset}
 Let $\Gamma \leq \Symmet{n}$ have an irrational invariant subspace.
 If $k$ is not a multiple of $n$, then $\Gamma$ has infinitely many core points in layer $\zzk{k}$.
\end{theorem}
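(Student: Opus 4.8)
The plan is to prove this theorem in close analogy with the imprimitive case (Theorem~\ref{thm:imprimitive-infinite-coreset}): reduce it to Proposition~\ref{prop:fullorbit-infinite-coresets} together with a construction that, for each admissible layer, produces an infinite family of integral points meeting the hypotheses of that proposition. The essential difference is that an irrational invariant subspace admits \emph{no} point of globally minimal projection, so Corollary~\ref{cor:direction-corepoint} is unavailable; one must instead work with the weaker notion of \emph{locally} minimal projection and invoke Proposition~\ref{prop:fullorbit-infinite-coresets} directly.

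First I would fix an irreducible irrational invariant subspace $V$ of $\Gamma$ (any irrational invariant subspace decomposes into irreducibles, which are again irrational) and study the projection norms $\norm{\proj{z}{V}}$ for $z$ ranging over the layer $\zzk{k}$. Two facts are needed: that these norms never vanish when $k$ is not a multiple of $n$ (the content of the forthcoming Lemma~\ref{lem:projection-positive}, and where the hypothesis on $k$ enters), and that they become arbitrarily small (Lemma~\ref{lem:projection-smaller-eps}, reflecting that the projection of the lattice onto an irrational subspace is dense). Granting these, I would order $\zzk{k}$ by Euclidean norm and successively select integral points $\cin{z}{1},\cin{z}{2},\dots$ whose $V$-projection is minimal among all layer points of norm at most $\norm{\cin{z}{j}}$; by construction each $\cin{z}{j}$ then has locally minimal projection onto $V$, and since the attained projection norms are positive but decrease to $0$, the points are pairwise distinct. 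This yields the required infinite family inside a single layer.

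It then remains to show that each $\cin{z}{j}$ is actually a core point. By Proposition~\ref{prop:fullorbit-infinite-coresets}, since $\cin{z}{j}$ has locally minimal projection, this is equivalent to $\cin{z}{j}$ being a core point for the subgroup $H_j \defeq \stab_\Gamma(\proj{\cin{z}{j}}{V})$. Here the choice of $V$ as irreducible pays off: because $\Gamma$ acts nontrivially and irreducibly on $V$, no nonzero vector of $V$ is fixed by all of $\Gamma$, so the nonzero projection $\proj{\cin{z}{j}}{V}$ is not $\Gamma$-fixed and $H_j$ is a \emph{proper} subgroup of $\Gamma$ (while still containing $\stab_\Gamma(\cin{z}{j})$ by Lemma~\ref{lem:projection-stabilizer-supergroup}). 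The prototypical case is a cyclic group of prime order, where $V=\vones^{\perp}$ and every non-fixed point already has trivial stabilizer, so the groups $H_j$ are trivial and the claim is immediate.

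I expect this last step to be the main obstacle. In the imprimitive setting one could translate the base point along a \emph{rational} direction $w\in W_\Omega$ chosen compatibly with the stabilizer and thereby reduce to a universal core point; no such convenient rational direction is available along an irrational subspace, so controlling the orbit polytope of $\cin{z}{j}$ under $H_j$ is genuinely harder. The hard part will be to argue that, for points of sufficiently small locally minimal projection, the $H_j$-orbit polytope is lattice-free -- either because $H_j$ fixes $\proj{\cin{z}{j}}{V}$ tightly enough that its orbit collapses, or by a further reduction of the same type applied to the proper subgroup $H_j$. This is precisely where a uniform, group-independent argument is delicate, which is consistent with the paper isolating Lemmas~\ref{lem:projection-positive} and~\ref{lem:projection-smaller-eps} as the technical core of the construction.
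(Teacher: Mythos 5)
Your construction of the infinite sequence is exactly the paper's: Lemmas~\ref{lem:projection-positive} and~\ref{lem:projection-smaller-eps} give, for $k$ not a multiple of $n$, layer points with positive but arbitrarily small projection onto $V$, and selecting a point of minimal projection norm among all $z' \in \zzk{k}$ with $\norm{z'} \leq \norm{z}$ yields pairwise distinct points with locally minimal projection. The reduction via Proposition~\ref{prop:fullorbit-infinite-coresets} to being a core point for $H \defeq \stab_\Gamma(\proj{z}{V})$ is also right. But the step you flag as ``the main obstacle'' and leave open is precisely where the paper has a short, decisive argument that you are missing, and it is not hard at all: irrationality of $V$ forces $\stab_\Gamma(\proj{z}{V}) = \stab_\Gamma(z)$ for \emph{every} $z \in \zz^n$ (the paper's Lemma~\ref{lem:QI-fullorbit}). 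The proof is two lines: writing $\rr^n = \lin\vones \oplus V \oplus W$ with $W$ again irrational, any $\gamma$ with $\gamma z \neq z$ gives a nonzero integral vector $\gamma z - z \in V \oplus W$; if its $V$-projection vanished, it would be a nonzero rational vector inside the irrational subspace $W$, a contradiction. Hence $\gamma$ moves $\proj{z}{V}$ as well. Consequently the $H$-orbit of $z$ is the singleton $\{z\}$, so $z$ is trivially a core point for $H$ (this is Corollary~\ref{cor:fullorbit-corepoint}), and the theorem follows with no further work.

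Your proposed substitutes do not close this gap. Passing to an irreducible $V$ and observing that $H$ is a \emph{proper} subgroup of $\Gamma$ buys nothing: being a core point for a proper subgroup is a nontrivial condition, and smallness of $\norm{\proj{z}{V}}$ alone does not imply it. Your intuition that ``$H$ fixes $\proj{z}{V}$ tightly enough that its orbit collapses'' is in fact the correct mechanism -- the orbit collapses all the way to a point -- but what makes it work is the rationality/irrationality dichotomy above (a nonzero integral vector cannot hide in an irrational subspace), not irreducibility or any iterated reduction to $H$. Note also that this is the one place where irrationality of the \emph{complement} $W$ is used, which is why the paper treats the stabilizer identity as a standing consequence of the irrationality hypothesis rather than as a group-by-group difficulty; your prime-cyclic special case is subsumed by it. As written, your proposal establishes the sequence but proves core-point-ness only in that special case, so it is incomplete.
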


To prove this theorem, we begin this section with an adaption of Proposition~\ref{prop:fullorbit-infinite-coresets}.
\begin{corollary}\label{cor:fullorbit-corepoint}
 Let $\Gamma\leq\Symmet{n}$ and let $V$ be an invariant subspace of~$\Gamma$.
 Let $z \in \zzk{k}$ be an integer point with locally minimal projection.
 Moreover, let $\stab_\Gamma(z) = \stab_\Gamma(\proj{z}{V})$.
 Then $z$ is a core point for $\Gamma$.
\end{corollary}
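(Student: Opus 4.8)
The plan is to deduce this immediately from Proposition~\ref{prop:fullorbit-infinite-coresets}. Since $z$ has locally minimal projection onto $V$, that proposition applies verbatim and tells us that $z$ is a core point for $\Gamma$ if and only if $z$ is a core point for the subgroup $H \defeq \stab_\Gamma(\proj{z}{V})$. Hence the entire task reduces to checking that $z$ is a core point for $H$.

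The key observation is that the extra hypothesis $\stab_\Gamma(z) = \stab_\Gamma(\proj{z}{V})$ makes this last check trivial. By Lemma~\ref{lem:projection-stabilizer-supergroup} one always has the inclusion $\stab_\Gamma(z) \leq \stab_\Gamma(\proj{z}{V})$; the corollary assumes this is an equality, so $H = \stab_\Gamma(\proj{z}{V}) = \stab_\Gamma(z)$. In particular every element of $H$ fixes $z$, which means the orbit $H z = \{z\}$ consists of the single point $z$. Consequently the orbit polytope $\conv(H z) = \{z\}$ is just that point, whose only integral point is its unique vertex $z$ itself. By Definition~\ref{def:corepoint}, $z$ is therefore (trivially) a core point for $H$.

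Combining the two steps, $z$ is a core point for $\Gamma$, as claimed. I do not expect any genuine obstacle here: the statement is a direct specialization of Proposition~\ref{prop:fullorbit-infinite-coresets}. The only points worth spelling out carefully are (i) that ``locally minimal projection'' is exactly the hypothesis required to invoke the proposition, and (ii) that the stabilizer equality collapses the $H$-orbit of $z$ to a single point, so that lattice-freeness for $H$ holds for free. The usefulness of this formulation is that it packages the case in which $z$ is already fixed by the whole stabilizer of its projection, which is precisely the situation exploited in the subsequent construction proving Theorem~\ref{thm:irrational-infinite-coreset}.
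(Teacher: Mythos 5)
Your proof is correct and takes exactly the route of the paper's own argument: invoke Proposition~\ref{prop:fullorbit-infinite-coresets} using the locally minimal projection hypothesis, then note that the stabilizer equality forces the orbit of $z$ under $\stab_\Gamma(\proj{z}{V})$ to be the single point $z$, so $z$ is trivially a core point for that subgroup. Your spelled-out justification of the trivial step (the orbit polytope $\conv(\stab_\Gamma(\proj{z}{V})\,z)=\{z\}$ is lattice-free by Definition~\ref{def:corepoint}) is just a slightly more explicit version of what the paper states in one line.
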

\begin{proof}
 The minimality condition is the same as in Proposition~\ref{prop:fullorbit-infinite-coresets}.
 If $\stab_\Gamma(z) = \stab_\Gamma(\proj{z}{V})$, then the orbit of $\ac{z}{\stab_\Gamma(\proj{z}{V})}$ consists only of a single element,
 showing that $z$ is a core point for $\stab_\Gamma(\proj{z}{V})$.
 Thus by Proposition~\ref{prop:fullorbit-infinite-coresets} $z$ is a core point for $\Gamma$.
\end{proof}

In order to apply Corollary~\ref{cor:fullorbit-corepoint} for 
the proof of Theorem~\ref{thm:irrational-infinite-coreset}, 
we show that its prerequisites are satisfied for an irrational invariant subspace. 
First we show in Lemma~\ref{lem:QI-fullorbit} that the stabilizer condition holds.
Lemma~\ref{lem:projection-smaller-eps} and Lemma~\ref{lem:projection-positive} together show that the local minimality condition is fulfilled.

\begin{lemma}\label{lem:QI-fullorbit}
 Let $\Gamma \leq \Symmet{n}$ and let $V$ be an irrational invariant subspace of~$\Gamma$.
 Then $\stab_\Gamma(z) = \stab_\Gamma(\proj{z}{V})$ for any $z \in \zz^n$.
\end{lemma}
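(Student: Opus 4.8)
Lemma~\ref{lem:QI-fullorbit} claims that for an irrational invariant subspace $V$ and any integer point $z$, the stabilizer of $z$ equals the stabilizer of its projection $\proj{z}{V}$. We already know one inclusion: Lemma~\ref{lem:projection-stabilizer-supergroup} gives $\stab_\Gamma(z) \leq \stab_\Gamma(\proj{z}{V})$ for \emph{any} invariant subspace. So the whole content of the lemma is the reverse inclusion $\stab_\Gamma(\proj{z}{V}) \leq \stab_\Gamma(z)$, and this is precisely where the irrationality of $V$ must be used — for rational subspaces the reverse inclusion can fail (e.g.\ $V = W$ in the $\Cycl{4}$ example, where $\proj{e_1}{W}$ has the nontrivial stabilizer $\langle (1\,3)(2\,4)\rangle$ while $e_1$ does not).

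\smallskip

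The plan is to take $\gamma \in \stab_\Gamma(\proj{z}{V})$ and show $\gamma z = z$. Write $W \defeq V^\perp$, so that $z = \proj{z}{V} \oplus \proj{z}{W}$. Since $\gamma$ fixes $\proj{z}{V}$ by assumption and the group action commutes with projection (Lemma~\ref{lem:projection-invariant-subspace-commute-group-action}), applying $\gamma$ gives
\[
\gamma z = \gamma(\proj{z}{V}) \oplus \gamma(\proj{z}{W}) = \proj{z}{V} \oplus \gamma(\proj{z}{W}).
\]
Therefore $\gamma z - z = \gamma(\proj{z}{W}) - \proj{z}{W}$, which lies in $W$. On the other hand, $\gamma z - z$ is a difference of two integer vectors, hence is itself a \emph{rational} (indeed integral) vector. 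The key observation to exploit is that $\gamma z - z$ also lies in the orthogonal complement $W$ of the irrational subspace $V$, and I want to conclude from irrationality of $V$ that this forces $\gamma z - z = 0$.

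\smallskip

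The main obstacle is making the irrationality argument precise: irrationality is a hypothesis on $V$ ($V \cap \qq^n = \{0\}$), but the vector $\gamma z - z$ lives in $W = V^\perp$, not in $V$. The clean way around this is to look instead at $\gamma(\proj{z}{V}) = \proj{z}{V}$ from the $V$-side. Since $\gamma$ is a permutation matrix with integer (indeed rational) entries and it fixes $\proj{z}{V}$, consider $(\gamma - \id)\proj{z}{V} = 0$; the complementary computation $(\gamma-\id)z = (\gamma-\id)\proj{z}{W}$ shows $(\gamma-\id)z \in W$. I would then argue that since $\gamma$ is rational and $V$ is $\Gamma$-invariant and irrational, the only integer (equivalently rational) vector whose $V$-component is fixed \emph{and} whose displacement under $\gamma-\id$ stays in $W$ must have zero displacement. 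Concretely: suppose $\gamma z \neq z$. Then $\gamma z - z$ is a nonzero rational vector in $W$. But I can also show it lies in $V$: indeed, decompose and use that $\proj{(\gamma z - z)}{V} = \gamma(\proj{z}{V}) - \proj{z}{V} = 0$ shows the $V$-component vanishes, which is consistent with $W$, so this alone gives nothing. The genuinely necessary step is to instead consider the rational vector $y \defeq \gamma z - z \in W$ and observe that because $\gamma$ is rational, $\proj{y}{V}$ would have to be expressible rationally in terms of $y$ through the rational projection operator onto $V$ — but a rational projection operator onto an irrational subspace annihilates all rational vectors only if $V$ itself is rational, so the correct formulation is that the projection matrix onto $V$ is \emph{irrational} precisely because $V$ is, and one pins down $\gamma z = z$ by showing any nonzero rational fixed-direction of $\gamma$ would generate a rational vector inside $V$. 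I would therefore structure the final step as: a nonzero rational $y \in W$ with $\proj{z}{V}$ $\gamma$-fixed leads, via averaging $y$ over the cyclic group $\langle\gamma\rangle$ or via the rational eigenspace structure of the rational matrix $\gamma$, to a nonzero rational vector in $V$, contradicting $V \cap \qq^n = \{0\}$; hence $y = 0$ and $\gamma z = z$.
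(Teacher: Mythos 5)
Your setup is correct and matches the paper's proof up to the point where the real work begins: the inclusion $\stab_\Gamma(z)\leq\stab_\Gamma(\proj{z}{V})$ is quoted from Lemma~\ref{lem:projection-stabilizer-supergroup} exactly as in the paper, and you correctly reduce the converse to showing that a nonzero integral vector $y=\gamma z-z$ lying in $V^\perp$ cannot have vanishing $V$-component. You also correctly diagnose the crux, namely that irrationality is a hypothesis on $V$ while $y$ lives in the orthogonal complement. But you never resolve it: your final paragraph is a plan, not a proof, and the plan is structurally unworkable. Both mechanisms you propose --- averaging $y$ over $\langle\gamma\rangle$, or decomposing $y$ along the rational eigenspace structure of the permutation matrix $\gamma$ --- produce vectors that remain inside $W=V^\perp$, because $W$ is $\Gamma$-invariant and in particular $\langle\gamma\rangle$-invariant; since $V\cap W=\{0\}$, no nonzero rational vector in $V$ can ever be ``generated'' this way. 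Seeking the contradiction inside $V$ is aiming at the wrong subspace. A smaller but related slip: your $W=V^\perp$ contains $\vones$ and hence always contains rational vectors, so no irrationality statement about it can hold as is.

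The paper closes the gap on the other side. It refines the decomposition to $\rr^n=\lin\vones\oplus V\oplus W$ and invokes the claim that this complementary summand $W$ is again an \emph{irrational} invariant subspace because $V$ is. Given that, the argument is immediate: for $\gamma\notin\stab_\Gamma(z)$ the vector $\gamma z-z$ is a nonzero integral vector whose $\lin\vones$-component vanishes (permutations preserve coordinate sums), so $\gamma z-z=\proj{(\gamma z-z)}{V}+\proj{(\gamma z-z)}{W}$; if the $V$-projection were zero, then $\gamma z-z$ would be a nonzero rational vector inside the irrational subspace $W$, a contradiction. Hence $\proj{(\gamma z)}{V}\neq\proj{z}{V}$, i.e.\ $\gamma\notin\stab_\Gamma(\proj{z}{V})$ --- the contrapositive of your direction. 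So the single missing ingredient in your attempt is the irrationality of the complement $W$ after splitting off $\lin\vones$, not any production of rational vectors in $V$. Be aware that this is also where all the remaining content hides: the paper asserts ``$W$ must be irrational because $V$ is'' in one line, and whether the complement inherits irrationality genuinely requires an argument (it is not a formal consequence of $V\cap\qq^n=\{0\}$ alone), so any complete write-up of this lemma must put its weight exactly there.
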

\begin{proof}
 We have already proven $\stab_\Gamma(z) \leq \stab_\Gamma(\proj{z}{V})$ in Lemma~\ref{lem:projection-stabilizer-supergroup}.
 For the reverse direction let $\rr^n = \lin \vones \oplus V \oplus W$. 
 Then $W$ must be an irrational invariant subspace because~$V$ is irrational.
 We consider a $\gamma \in \Gamma \setminus \stab_\Gamma(z)$ and show $\gamma \not \in \stab_\Gamma(\proj{z}{V})$.
 For $z=0$ the statement is obviously true, so let $z\not = 0$.
 Then $$\gamma z - z \; = \; \proj{( \gamma z - z )}{V} + \proj{( \gamma z - z )}{W}$$ is a non-zero integral vector.
 As $V$ and $W$ are irrational subspaces, both projections on the right must be non-zero,
 showing in particular $\proj{( \gamma z - z )}{V} = \proj{\gamma z }{V}- \proj{z}{V} \not= 0$.
 Hence $\gamma \not \in \stab_\Gamma(\proj{z}{V})$.
\end{proof}

\begin{lemma}\label{lem:projection-positive}
 Let $\Gamma \leq \Symmet{n}$ and let $V$ be an irrational invariant subspace of~$\Gamma$.
 Then for all $k \in \setOneTo{n-1}$ and every $z\in \zzk{k}$ it holds that $\|\proj{z}{V}\| > 0$.
\end{lemma}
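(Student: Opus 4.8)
The plan is to argue by contradiction: suppose some $z \in \zzk{k}$ with $1 \le k \le n-1$ satisfied $\proj{z}{V} = 0$, and derive that $z$ must then be a rational multiple of $\vones$, which is incompatible with $z$ being integral while $k \not\equiv 0 \pmod n$.

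First I would fix an orthogonal decomposition $\rr^n = \lin\vones \oplus V \oplus W$ into $\Gamma$-invariant subspaces, exactly as in the proof of Lemma~\ref{lem:QI-fullorbit}. The key structural input at this stage is that, because $V$ is irrational, the complementary summand $W$ is irrational as well, so $W \cap \qq^n = \{0\}$. Next I would record that the orthogonal projection onto the fixed space is the orbit barycenter, so that $\proj{z}{\lin\vones} = \frac{k}{n}\vones$, using $\scp{z}{\vones} = k$ together with formula~\eqref{eq:projection-fixed-space}.

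With the assumption $\proj{z}{V} = 0$ in hand, the three-term decomposition of $z$ collapses to $z = \frac{k}{n}\vones + \proj{z}{W}$, whence $\proj{z}{W} = z - \frac{k}{n}\vones$. The point of this step is that the right-hand side is a \emph{rational} vector, since $z$ is integral and $\frac{k}{n}\vones$ is rational. Thus $\proj{z}{W}$ is a rational vector lying in the irrational subspace $W$, which forces $\proj{z}{W} = 0$ and therefore $z = \frac{k}{n}\vones$.

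Finally I would conclude: since $z \in \zz^n$, every coordinate $\frac{k}{n}$ is an integer, so $n$ divides $k$, contradicting $1 \le k \le n-1$. Hence no such $z$ exists and $\norm{\proj{z}{V}} > 0$ for every $z \in \zzk{k}$. The one delicate point, and the step I would treat most carefully, is the irrationality of the complement $W$: this is precisely what converts the merely rational vector $z - \frac{k}{n}\vones$ into the zero vector, and it is exactly where the hypothesis that $V$ is irrational (rather than just $\Gamma$-invariant) enters. I would therefore make sure to justify this passage in the same way it is used in Lemma~\ref{lem:QI-fullorbit}, rather than leaving it implicit.
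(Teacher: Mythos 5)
Your proposal is correct and follows essentially the same route as the paper's own proof: both fix the decomposition $\rr^n = \lin\vones \oplus V \oplus W$, observe that $\proj{z}{V} = 0$ would force the rational vector $z - \frac{k}{n}\vones$ to lie in $W$, and conclude from the irrationality of $W$ that $k$ must be a multiple of $n$, contradicting $k \in \setOneTo{n-1}$. The delicate step you rightly flag --- that the complementary summand $W$ is irrational because $V$ is --- is exactly the step the paper itself asserts without further argument (here and in Lemma~\ref{lem:QI-fullorbit}), so your treatment is faithful to, and no less complete than, the paper's proof.
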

\begin{proof}
 Let $\rr^n = \lin \vones \oplus V \oplus W$. Then $W$ is an irrational invariant subspace of~$\Gamma$.  
 We know that $\proj{z}{V}$ is the zero vector if and only if $z \in \lin \vones \oplus W$.
 This is in turn equivalent to the rational vector $z - \frac{k}{n}\vones$ lying in~$W$.
 Because~$W$ is irrational, the only rational vector it contains is the zero vector.
 Thus, the projection $\proj{z}{V}$ can be zero only if~$k$ is an integral multiple of~$n$.
\end{proof}

\begin{lemma}\label{lem:projection-smaller-eps}
 Let $\Gamma \leq \Symmet{n}$ and let $V$ be an irrational invariant subspace of~$\Gamma$.
 Then for every $\eps > 0$ and $k \in \setOneTo{n-1}$ there exists a vector $z \in \zzk{k}$ such that $\|\proj{z}{V}\| < \eps$.
\end{lemma}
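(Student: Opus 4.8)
The plan is to prove the stronger statement that the orthogonal projection of the whole layer lattice onto $V$ is \emph{dense} in $V$; since $0\in V$, density instantly yields integer points in $\zzk{k}$ whose projection has norm below any prescribed $\eps$. Write $\rr^n=\lin\vones\oplus V\oplus W$ with pairwise orthogonal invariant subspaces, so in particular $V\subseteq\vones^\perp$. Fix any representative $z_0\in\zzk{k}$ (for $k\in\setOneTo{n-1}$ one may take the $0/1$-vector with $k$ ones), and observe that $\zzk{k}=z_0+\zzk{0}$, where $\zzk{0}=\vones^\perp\cap\zz^n$ is the root lattice $A_{n-1}$. Hence $\proj{(\zzk{k})}{V}=\proj{z_0}{V}+\proj{(\zzk{0})}{V}$ is a translate of the subgroup $G\defeq\proj{(\zzk{0})}{V}\le V$. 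A translate of a dense set is dense, so it suffices to show that $G$ is dense in $V$; then $0$ lies in its closure, which is exactly the assertion of the lemma.

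To establish density of $G$ I would invoke the standard characterization of dense subgroups via Pontryagin duality for $V\cong\rr^{\dim V}$: a subgroup $G\le V$ fails to be dense precisely when there is a nonzero $\xi\in V$ with $\scp{\xi}{g}\in\zz$ for every $g\in G$. Because $\xi\in V$ is orthogonal to both $W$ and $\vones$, for $x\in\zzk{0}$ one has $\scp{\xi}{\proj{x}{V}}=\scp{\xi}{x}$, so such a $\xi$ would satisfy $\scp{\xi}{x}\in\zz$ for all $x\in\zzk{0}$. Testing this on the generators $e_i-e_j$ of $\zzk{0}$ gives $\xi_i-\xi_j\in\zz$ for all $i,j$, so $\xi=\xi_1\vones+m$ for some $m\in\zz^n$. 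Since $\xi\in V\subseteq\vones^\perp$ we get $\sum_i\xi_i=0$, forcing $\xi_1=-\tfrac1n\sum_i m_i\in\qq$ and therefore $\xi\in\qq^n$. This contradicts the irrationality of $V$, as $\xi$ would be a nonzero rational vector contained in $V$. Hence no such $\xi$ exists and $G$ is dense.

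The heart of the argument is thus the reduction ``density $\Longleftrightarrow$ absence of a nontrivial rational character,'' and irrationality of $V$ is exactly what rules out every such character; this density step is the only place where genuine work hides, while the translation to layer $0$ and the computation on the generators $e_i-e_j$ are routine. I expect the main conceptual obstacle to be recognizing that a naive dimension or rank count on $\proj{(\zzk{0})}{V}$ does \emph{not} by itself force density (a finitely generated subgroup of $\rr^d$ of rank exceeding $d$ can still fail to be dense), so one really must exploit irrationality through the character criterion rather than through cardinality of generators. A pleasant feature worth noting is that this approach needs neither injectivity of the projection nor any quantitative Diophantine-approximation machinery: the only external ingredient is the elementary fact that the annihilator of a dense subgroup of $\rr^d$ is trivial.
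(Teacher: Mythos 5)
Your proof is correct and is essentially the paper's argument in different packaging: the paper applies Kronecker's theorem to the matrix $P_V B$ whose columns are the differences $e_{i+1}-e_i$, and its dual condition --- that $B^\tp y'\in\zz^{n-1}$ for $y'=P_V y\in V$ forces $y'$ to be of the form $\zeta\vones+u$ with $u$ integral, hence rational, hence zero by irrationality of $V$ --- is word for word your annihilator computation with $y'$ in the role of $\xi$. Your reduction to density of the subgroup $\proj{(\zzk{0})}{V}$ is just the homogeneous form of the same duality (the paper instead absorbs the offset $k e_1$ into the inhomogeneous Kronecker statement, invoking the self-adjointness of $P_V$ exactly where you use $\scp{\xi}{\proj{x}{V}}=\scp{\xi}{x}$), so the two proofs coincide in substance.
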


For the proof of Lemma~\ref{lem:projection-smaller-eps}, we use two auxiliary statements.
We begin with the symmetry of the projection matrix $P_V = (\proj{e_i}{V})_{i \in \setOneTo{n}} \in \rr^{n\times n}$, which maps $\rr^n$ onto an invariant subspace~$V$.
\begin{lemma}\label{lem:proj-matrix-symmetric}
For the orthogonal projection to a linear subspace $V$ holds:
\begin{enumerate}
 \item $\scp{\proj{e_i}{V}}{e_j} = \scp{\proj{e_i}{V}}{\proj{e_j}{V}}$
 \item The projection matrix $P_V = (\proj{e_i}{V})_{i \in \setOneTo{n}} \in \rr^{n\times n}$ is symmetric.
\end{enumerate}
\end{lemma}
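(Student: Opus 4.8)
The plan is to derive both statements from the single defining property of the orthogonal projection onto $V$: for every $x \in \rr^n$ the residual $x - \proj{x}{V}$ is orthogonal to $V$, so that $\scp{x - \proj{x}{V}}{v} = 0$ for all $v \in V$. Everything then reduces to bookkeeping with the standard inner product $\scp{\cdot}{\cdot}$, together with the fact that $\proj{e_i}{V} \in V$ by definition of the projection.

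For part~(i), I would apply this orthogonality relation with $x = e_j$ and with the test vector $v = \proj{e_i}{V}$, which lies in $V$. This yields $\scp{e_j}{\proj{e_i}{V}} = \scp{\proj{e_j}{V}}{\proj{e_i}{V}}$. Using only that the inner product is symmetric, both sides can be rewritten to give $\scp{\proj{e_i}{V}}{e_j} = \scp{\proj{e_i}{V}}{\proj{e_j}{V}}$, which is exactly the claimed identity.

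For part~(ii), I would first fix the convention that the $i$-th column of $P_V$ is the vector $\proj{e_i}{V}$, so that the $(j,i)$-entry of $P_V$ equals the $j$-th coordinate $(\proj{e_i}{V})_j = \scp{\proj{e_i}{V}}{e_j}$. Invoking part~(i), this entry equals $\scp{\proj{e_i}{V}}{\proj{e_j}{V}}$, an expression that is manifestly invariant under exchanging $i$ and $j$ by symmetry of the inner product. Hence the $(j,i)$-entry and the $(i,j)$-entry of $P_V$ coincide, proving that $P_V$ is symmetric.

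There is no genuine obstacle here: the statement is a standard fact about orthogonal projections, and the only point requiring a little care is keeping track of the row/column index convention for $P_V$ so that its matrix entries are correctly matched with the inner products appearing in part~(i).
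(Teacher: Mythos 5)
Your proof is correct, and for part~(i) it takes a genuinely different (if mild) route from the paper's. The paper fixes an orthonormal basis $v_1,\dots,v_d$ of $V$, expands both projections in it, and computes directly
$\scp{\proj{e_i}{V}}{\proj{e_j}{V}} = \sum_{k=1}^d \scp{e_i}{v_k}\scp{e_j}{v_k} = \scp{\proj{e_i}{V}}{e_j}$;
you instead invoke the defining variational property of the orthogonal projection --- the residual $e_j - \proj{e_j}{V}$ is orthogonal to all of $V$, in particular to the test vector $\proj{e_i}{V} \in V$ --- and obtain the same identity without ever choosing a basis. Your argument is basis-free and hence marginally more conceptual and more general (it works verbatim in any inner product space), whereas the paper's computation is more explicit and has the side benefit of exhibiting the manifestly symmetric bilinear expression $\sum_k \scp{e_i}{v_k}\scp{e_j}{v_k}$, i.e.\ effectively the representation $P_V = \sum_k v_k v_k^\tp$, from which symmetry of $P_V$ can also be read off at a glance. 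For part~(ii) the two arguments coincide: the $(j,i)$-entry of $P_V$ is $\scp{\proj{e_i}{V}}{e_j}$, which by part~(i) and symmetry of the inner product equals the $(i,j)$-entry; your explicit attention to the column convention for $P_V$ is exactly the point the paper leaves implicit, and handling it carefully is appropriate.
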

\begin{proof}
 Let $v_1,\dots,v_d$ be an orthonormal basis for $V$.
 \begin{equation*}
 \begin{split}
  \scp{\proj{e_i}{V}}{\proj{e_j}{V}} &= \scp{\sum_{k=1}^d \scp{e_i}{v_k} v_k}{\sum_{l=1}^d \scp{e_j}{v_l} v_l} \\
  &= \sum_{k=1}^d \scp{e_i}{v_k} \scp{e_j}{v_k} = \scp{\proj{e_i}{V}}{e_j} 
 \end{split}
 \end{equation*}
The symmetry in the second part follows from the symmetry of the scalar product in $\scp{\proj{e_i}{V}}{e_j} = \scp{\proj{e_i}{V}}{\proj{e_j}{V}} = \scp{\proj{e_j}{V}}{\proj{e_i}{V}} = \scp{\proj{e_j}{V}}{e_i}$.
\end{proof}

The main ingredient to prove Lemma~\ref{lem:projection-smaller-eps} is Kronecker's Theorem, which is reproduced below as given in \cite[p.~80]{Schrijver1998}.
\begin{theorem}[Kronecker's Theorem]\label{thm:kronecker}
 Let $A \in \rr^{m\times n}$ and let $b\in \rr^n$.
 Then the following two statements are equivalent:
 \begin{enumerate}
  \item for each $\varepsilon > 0$ there is an $x\in \zz^n$ with $\|Ax-b\| < \varepsilon$;
  \item for each $y\in \rr^m$ the implication $A^\tp y \in \zz^n \; \Rightarrow \; b^\tp y \in \zz$ is true.
 \end{enumerate}
\end{theorem}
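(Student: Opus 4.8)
The plan is to reformulate both statements in terms of the closed subgroup
$$G \defeq \overline{A\zz^n} = \overline{\smallSetOf{Ax}{x\in\zz^n}} \subseteq \rr^m,$$
the topological closure of the image lattice, and then to read off the equivalence from an annihilator duality. Here $b$ and each $Ax$ are vectors in $\rr^m$, so that $\norm{Ax-b}$ is defined. The first statement asserts precisely that $b\in G$.

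To handle the second statement I would introduce the \emph{annihilator}
$$G^{\ast} \defeq \bigSetOf{y\in\rr^m}{\scp{g}{y}\in\zz \text{ for all } g\in G}.$$
The first key step is the identification $G^{\ast} = \smallSetOf{y\in\rr^m}{A^{\tp}y\in\zz^n}$. The inclusion ``$\supseteq$'' uses that $\scp{Ax}{y}=\scp{x}{A^{\tp}y}\in\zz$ for every $x\in\zz^n$ once $A^{\tp}y\in\zz^n$, together with the fact that the set $\smallSetOf{z}{\scp{z}{y}\in\zz}$ is closed and hence contains $G=\overline{A\zz^n}$; the inclusion ``$\subseteq$'' follows by testing $g=Ae_i$. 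With this, the second statement says exactly that $\scp{b}{y}\in\zz$ for all $y\in G^{\ast}$, i.e. that $b$ lies in the double annihilator $G^{\ast\ast}$. The whole theorem thereby reduces to the single identity $G=G^{\ast\ast}$.

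The inclusion $G\subseteq G^{\ast\ast}$ is immediate from the definition of $G^{\ast}$ and gives the first-implies-second direction for free. The substance is the reverse inclusion $G^{\ast\ast}\subseteq G$. For this I would invoke the structure theorem for closed subgroups of $\rr^m$: every such $G$ splits as an orthogonal direct sum $G=U\oplus\Lambda$, where $U$ is the largest linear subspace contained in $G$ and $\Lambda=\zz w_1+\dots+\zz w_q$ is a lattice whose generators, after subtracting their $U$-components (which remain in $G$), may be taken in $U^{\perp}$ and linearly independent. Writing $T\defeq\lin\{w_1,\dots,w_q\}\subseteq U^{\perp}$ and letting $T'$ be the orthogonal complement of $T$ inside $U^{\perp}$, a direct computation yields $G^{\ast}=T'\oplus\Lambda^{\ast}$, where $\Lambda^{\ast}\subseteq T$ is the dual lattice of $\Lambda$: the subspace $U$ forces $y\perp U$, the directions $T'$ remain unconstrained, and the conditions $\scp{w_j}{y}\in\zz$ cut out exactly $\Lambda^{\ast}$ in the $T$-direction. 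Dualizing once more over the orthogonal splitting $\rr^m=U\oplus T\oplus T'$, using that a subspace annihilates precisely its orthogonal complement and that the bidual of a full-rank lattice satisfies $(\Lambda^{\ast})^{\ast}=\Lambda$, I obtain $G^{\ast\ast}=U\oplus\Lambda=G$, as required.

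The hard part is exactly this reverse inclusion $G^{\ast\ast}\subseteq G$, a Pontryagin-type biduality for closed subgroups of $\rr^m$; the reformulation and the easy inclusion are purely formal. Its crux is the structure theorem $G=U\oplus\Lambda$ together with lattice biduality. An alternative that avoids an explicit appeal to the structure theorem would be an induction on $m$ which, in the failure case $b\notin G$, peels off either a line contained in $G$ or produces a separating covector $y$ with $A^{\tp}y\in\zz^n$ but $\scp{b}{y}\notin\zz$; but the decomposition route above is cleaner.
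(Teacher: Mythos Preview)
The paper does not prove this statement: Kronecker's Theorem is quoted verbatim from Schrijver's book \cite[p.~80]{Schrijver1998} and used as a black box in the proof of Lemma~\ref{lem:projection-smaller-eps}. So there is no ``paper's own proof'' to compare against.

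That said, your argument is correct and is one of the standard ways to establish the result. The reformulation of (i) as $b\in G=\overline{A\zz^n}$ and of (ii) as $b\in G^{\ast\ast}$ is accurate; in particular the identification $G^{\ast}=\{y:A^{\tp}y\in\zz^n\}$ is justified exactly as you say, testing on the generators $Ae_i$ for one inclusion and using continuity plus closedness of $\zz$ for the other. The heart of the matter, $G^{\ast\ast}=G$ for a closed subgroup $G\leq\rr^m$, is precisely Pontryagin biduality specialised to $\rr^m$, and your derivation via the structure theorem $G=U\oplus\Lambda$ together with lattice biduality $(\Lambda^{\ast})^{\ast}=\Lambda$ is clean and complete. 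The only thing worth flagging is that you are silently correcting a typo in the statement (the paper writes $b\in\rr^n$ where $b\in\rr^m$ is meant), which you handle appropriately.
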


\begin{proof}[Proof of Lemma~\ref{lem:projection-smaller-eps}]
 Using the projection matrix $P_V = (\proj{e_{i}}{V})_{i \in \setOneTo{n}} \in \rr^{n\times n}$, our goal is to show that for every $\varepsilon > 0$ there exists a $z \in \zzk{k}$ with $\|P_V z\| < \varepsilon$.
 Let $B \in \rr^{n\times(n-1)}$ be the matrix whose columns consist of the vectors $\cin{b}{i} \defeq e_{i+1} - e_{i}$ for $i \in \setOneTo{n-1}$.
 We can write every $z \in \zzk{k}$ as $z = k e_{1} + B z'$ for a suitable $z' \in \zz^{n-1}$.
 Thus, we have to show that for every $\varepsilon > 0$ we find a $z' \in \zz^{n-1}$ such that
\begin{equation}\label{eq:inhomogeneous-diophantine-approximation}
 \|k P_V e_{1} + P_V B z'\| < \varepsilon.
\end{equation}
 Kronecker's Theorem states that this is equivalent to an implication concerning the integrality of $(P_V B)^\tp y$ and $(P_V e_{1})^\tp y$ for $y \in \rr^n$.
 Using the symmetry of $P_V$ from Lemma~\ref{lem:proj-matrix-symmetric}, we have to show that $B^\tp y' \in \zz^n$ implies $(e_{1})^\tp y' \in \zz$ where $y' \defeq P_V y = \proj{y}{V}$ is the projection of $y$ onto $V$.
 
 Let us assume that $B^\tp y' \in \zz^n$ holds.
 We will show that this can only be the case for $y' = 0$, from which we immediately obtain that the implication required by Kronecker's Theorem is satisfied.
 From $B^\tp y' \in \zz^n$ we infer that for all $\cin{b}{i}$ we must have $\scp{\cin{b}{i}}{y'} \in \zz$.
 Thus, we can write $y'$ as $y' = \zeta \vones + u$ for some $\zeta \in \rr$ and an integral vector $u \in \zz^n$.
 Since $y'$ lies in $V$, we know that $0 = \scp{\vones}{y'} = n \zeta + \scp{\vones}{u}$.
 This shows that $\zeta$ must be rational number.
 Hence, $y'$ must be a rational vector.
 The only rational vector lying in the irrational invariant subspace $V$ is the zero vector.
\end{proof}

Now we have all the ingredients for the proof of our main result of this section:

\begin{proof}[Proof of Theorem~\ref{thm:irrational-infinite-coreset}]
  Lemma~\ref{lem:projection-positive} together with Lemma~\ref{lem:projection-smaller-eps} show that for every $k \in \setOneTo{n-1}$ and every $\eps > 0$ we find an integer point $z \in \zzk{k}$ such that $0 < \|\proj{z}{V}\| < \eps$ and, by choosing one with minimal norm, $\|\proj{z}{V}\| \leq \|\proj{z'}{V}\|$ for all $z' \in \zzk{k}$ with $\|z'\| \leq \|z\|$.

  By letting $\eps$ approach zero, we thus obtain a sequence of mutually distinct points $\cin{z}{1}, \cin{z}{2}, \dots \in \zzk{k}$,
  which by construction each satisfy the minimality condition of Corollary~\ref{cor:fullorbit-corepoint}.
  Lemma~\ref{lem:QI-fullorbit} shows that also the stabilizer condition of Corollary~\ref{cor:fullorbit-corepoint} is automatically fulfilled.
  Hence, each of these points $\cin{z}{1}, \cin{z}{2}, \dots \in \zzk{k}$ is a core point.
\end{proof}

We close the section with an example of an infinite sequence of core points that can be derived from Theorem~\ref{thm:irrational-infinite-coreset}. 
For a detailed discussion we refer to \cite[Section~5.2.2]{ThomasPhD}.

\begin{example}
 Let $\Cycl{5} = \langle (1\,2\,3\,4\,5)\rangle$ be the cyclic group of order five.
 Moreover, let $f_j$ be the $j$-th Fibonacci number.
 For every $j$ the point $\cin{z}{j} = (0,f_j,f_j,0,f_{j+1})^\tp$ is a core point for $\Cycl{5}$.
\end{example}

\section*{Acknowledgements}

We thank Reinhard Knörr and Michael Joswig for valuable discussions
and we thank Erik Friese and Frieder Ladisch for helpful comments on a previous version of the paper.
We also thank the anonymous referees for their beneficial remarks.


\providecommand{\bysame}{\leavevmode\hbox to3em{\hrulefill}\thinspace}
\providecommand{\MR}{\relax\ifhmode\unskip\space\fi MR }
\providecommand{\MRhref}[2]{%
  \href{http://www.ams.org/mathscinet-getitem?mr=#1}{#2}
}
\providecommand{\href}[2]{#2}


\bibliographystyle{alpha}

\section*{References}
\subsection*{Bibliography}

\renewcommand{\section}[2]{}%




\subsection*{Software}





\end{document}